\newtheorem{defi}{Definition}[section]
\newtheorem{lem}[defi]{Lemma}
\newtheorem{theo}[defi]{Theorem}
\newtheorem{cor}[defi]{Corollary}
\newtheorem{pro}[defi]{Proposition}
\newtheorem{rem}[defi]{Remark}
\DeclareMathOperator{\divop}{div}
\DeclareMathOperator{\jac}{Jac}
\DeclareMathOperator{\trace}{Tr}
\DeclareMathOperator{\dist}{dist}
\DeclareMathOperator{\Sym}{Sym}
\DeclareMathOperator{\RR}{\mathbb{R}}
\title[Euler-type equations  and hyperbolic limits of kinetic Cucker-Smale models]{Euler-type equations and commutators in singular and hyperbolic limits of kinetic Cucker-Smale models}
\author{David Poyato}
\address{Departamento de Matem\'atica Aplicada, Universidad de Granada, 18071 Granada, Spain}
\email{davidpoyato@ugr.es}
\author{Juan Soler}
\address{Departamento de Matem\'atica Aplicada, Universidad de Granada, 18071 Granada, Spain}
\email{jsoler@ugr.es}
{\thanks{This work has been partially supported by the MINECO-Feder (Spain) research grant number MTM2014-53406-R, the Junta de Andaluc\'ia (Spain) Project FQM 954, and the MECD (Spain) research grant FPU2014/06304 (D.P.).
}}
\begin{document}

\begin{abstract}
This paper deals with the derivation and analysis of a compressible Euler--type equation with singular commutator, which is derived from a hyperbolic limit of the kinetic description to the Cucker--Smale model of interacting individuals.
\end{abstract}

\maketitle

\section{Introduction}\label{Introduction.section}

The aim of this paper is to rigorously derive and analyze the following system of equations
of Euler-type with singular commutators arising from hyperbolic limits of kinetic Cucker--Smale models
\begin{equation}\label{Euler-limit.intro}
\left\{
\begin{array}{ll}
\displaystyle\frac{\partial \rho}{\partial t}+\divop (\rho u)=0, & t\geq 0,\,x\in\RR^N\\
\nabla\psi + \mu u = \phi_0*(\rho u)-(\phi_0*\rho)u, & t\geq 0,\,x\in\RR^N,
\end{array}
\right.
\end{equation} 
where $\rho=\rho(t,x)$ stands for the density, $u=u(t,x)$ is the velocity field of the population at time $t$ and position $x$ and $N$ is the space dimension (the physically meaningful cases are $N=2$ and $N=3$). The velocity and the density are coupled through the second equation in \eqref{Euler-limit.intro}, where some linear friction with constant coefficient $\mu\geq 0$ is considered, some external or internal force $F=-\nabla\psi$ associated with a potential $\psi=\psi(t,x)$ is assumed to act on the system and the above singular kernel in the commutator takes the form
\begin{equation}\label{influencefunction.singular.form}
\phi_0(r)= \frac{1}{c_\lambda^\lambda}\frac{1}{r^{2 \lambda}}.
\end{equation}
Then, the commutator can be written in the following way
\begin{eqnarray}\label{lc}
\phi_0*(\rho u)-(\phi_0*\rho)u=-\frac{1}{c_\lambda^\lambda}[M_u,I_{N-2\lambda}]\rho=-\frac{1}{c_\lambda^\lambda}\int_{\RR^N}\frac{u(t,x)-u(t,y)}{\vert x-y\vert^{2\lambda}}\rho(t,y)\,dy,
\end{eqnarray}
where $M_u$ is the multiplication operator by $u$ and $I_{\alpha}f$ denotes the Riesz potential of order $\alpha\in (0,N)$ generated by a scalar measurable function $f$ on $\RR^N$. For the sake of simplicity, we forget about the standard constant arising from the fundamental solution of the fractional Laplacian $(-\Delta)^{\alpha/2}$ and we simply write 
$$(I_\alpha f)(x)=\left(\frac{1}{\vert \cdot\vert^{N-\alpha}}\ast f\right)(x)=\int_{\RR^N}\frac{f(y)}{\vert x-y\vert^{N-\alpha}}\,dy,$$
see \cite[Chapter V, Section 1]{Stein} for more details.

The coupling equation in (\ref{Euler-limit.intro}) collects the nonlocal alignment effects of the velocity of the system in terms of the friction and the potential $\psi$ that, as we will see later, inherits the behavior of the microscopic system from which \eqref{Euler-limit.intro} is a macroscopic derivation.  As we have mentioned before, $\psi$ can be considered external or internal (i.e., self-generated). Hence, in the former case one restricts to linear terms whilst the latter case does not only cover local nonlinearities but also nonlocal ones of the type $\psi=k \ast \rho$. Such choices open our scope of work and connect \eqref{Euler-limit.intro}-\eqref{influencefunction.singular.form} with other swarming, aggregation, generalized surface quasi-geostrophic (gSQG) or Euler-type models arising in the literature \cite{BertozziVerdera,CastroCordobaGomezSerrano,delaHozHassainiaHmidi,TopazBertozzi1,TopazBertozzi2}.

The connection between microscopic and macroscopic scales of description of the dynamical properties of a large system of particles intends for the achieved models to collect the interaction of individuals towards the self-organization of a complex system. This is a currently challenging issue in different areas of Science steering towards the attainment of more complex rules beyond the Newtonian-type laws and it constitutes a particularly active area in the biological description of emergent processes in complex populations. Depending on the biological context, some of the emerging dynamics are called \textit{swarming}, \textit{schooling} or \textit{flocking}, where Cucker--Smale model \cite{CuckerSmale1,CuckerSmale2,HaTadmor} is relevant. For different models in this context we refer, for instance, to \cite{[CCG10B],BS12,BDT99,Couzin,DM08,HildenbrandtCarereHemelrijk,Marche,MotschTadmor2,Reynolds,Viscek} and the references therein, while in the same spirit but in the framework of social and cellular interactions, we can mention \cite{BBNS07,BBNS10B,BCKS,traffic,BKS13,BPRS98,GSR,HFV00,Marche,NOW06,PT13} among others. From the biological viewpoint, the main interest is to learn the convoluted set of rules stating how agents in an ecosystem self-organize. From such \textit{local} organization or \textit{cooperation} between the individuals in neighboring areas, one would expect to recover some kind of \textit{global emergent behavior} as a complex mechanism leading to a dynamics driven by the collective motion of the rest of agents. Some well known examples in the nature are the formation of a flock of birds, a school of fish or a swarm. In this sense, the individuals in the ecosystem are self-driven by the collective global behavior of the whole population, and, in turns, it is determined by the local dynamics itself.

Note that the above system  \eqref{Euler-limit.intro}-\eqref{influencefunction.singular.form} can be compared with the recently proposed \textit{compressible Euler equations with nonlocal alignment} arising in the setting of Cucker--Smale dynamics of flocking
\begin{equation}\label{Euler-alignment.eq}
\left\{
\begin{array}{l}
\displaystyle\frac{\partial\rho}{\partial t}+\divop(\rho u)=0,\\
\displaystyle\frac{\partial}{\partial t}\left(\rho	u\right)+\divop(\rho u\otimes u)=-\nabla \rho-\int_{\RR^N}\phi(\vert x-y\vert)(u(t,x)-u(t,y))\rho(t,x)\rho(t,y)\,dy,
\end{array}
\right.
\end{equation}
see \cite{CarrilloChoiTadmorTan,TadmorTan} for some properties of such models. Specifically, when inertia effect (i.e., acceleration) is neglected in such hydrodynamic nonlocal alignment model but linear friction and some force $F=-\nabla\psi$ are assumed to act on the system, then we are formally led to the above general system (\ref{Euler-limit.intro}). Nevertheless, the rigorous derivation of the  Euler system with nonlocal alignment effects (\ref{Euler-alignment.eq}) has been mainly obtained from mesoscopic models in a formal way through a method of closure of the hierarchy of velocity moments based on the choice of a monokinetic distribution of particles \cite{CarrilloChoiTadmorTan,CarrilloFornasierToscaniVecil,ChuangDorsognaMarthalerBertozziChayes,HaTadmor,MotschTadmor2,TadmorTan}. Recently, some attempts to derive such models have been addessed via rigorous hydrodynamic limits in \cite{CarrilloChoiKarper,KangVasseur,KarperMelletTrivisa}. The first two references deal with what authors call \textit{local alignment force} that can be understood as a ``viscous force'' with respect to the mean velocity (it can also be derived from a Dirac delta limit in the the Mostch--Tadmor kinetic alignment term) whilst the latter also focuses on including Cucker--Smale type alignment operators in such a way that they are not lost in the hydrodynamic limit (as it happens in \cite{CarrilloChoiKarper,KangVasseur} due to the symmetries of the local alignment force).  

To our best knowledge, no rigorous derivation has been achieved so far, apart from \cite{KarperMelletTrivisa}, where regular influence functions $\phi=\phi(r)$ are considered. Specifically, the underlying choice of the influence function is the classical by Cucker and Smale \cite{CuckerSmale1,CuckerSmale2}. It is an interaction kernel that decreases as interparticle distance $r$ increases, it is bounded near the origin (i.e., at $r=0$) and takes the form:
\begin{eqnarray}\label{blc}
\phi(r)=\frac{1}{(1+r^2)^\lambda},
\end{eqnarray}
where $\lambda>0$ is some parameter that measures the fall-off of the interactions between individuals separated by large distances. Naturally, the lack of singularity at $r=0$ (like in the framework of Newtonian interactions) simplifies the mathematical handling of the kernel and nonlinear terms appearing in the Euler equation with nonlocal alignment. 
From a mathematical point of view, some analysis has been done in the singular case (i.e., $\phi_0(r)$ given by (\ref{influencefunction.singular.form})) for values of the parameter $\lambda\in (0,1/2)$. See \cite{CarrilloChoiHauray,HaLiu,Peszek,Peszek2}, where the authors are concerned with the local in-time well posedness of the kinetic model, particle asymptotics, existence of piecewise weak solutions and existence and uniqueness of $W^{1,1}$ strong solutions to the particle system. As discussed in \cite{MuchaPeszek}, regular influence functions aims to prevent distant interparticle interactions. Nevertheless, in order to augment the local interactions, a possible solution might be the one considered in \cite{MotschTadmor}  (assuming asymmetric interactions) or the model proposed in \cite{Peszek} (with singular interactions). Indeed, including such singular kernels leads to new asymptotics in the modeling of flocking behavior. Namely, \cite{Peszek} shows some examples where (\ref{influencefunction.singular.form}) with $\lambda\in (0,1/2)$ entails finite time collisions (sticking indeed) of trajectories.

In this work, we will be interested in rigorously obtaining measure-valued solutions to \eqref{Euler-limit.intro}-\eqref{influencefunction.singular.form} through a singular limit in the classical choice of the influence function devoted to Cucker and Smale. It will be done via the derivation of a hydrodynamic singular limit on a hyperbolic scaling for the kinetic Cucker--Smale system of flocking with friction and diffusion effects in the velocity variable $v$, namely
\begin{equation}\label{VPFPCS-dimensionless-hyperbolicscaling.Intro.form}
\varepsilon\frac{\partial f_\varepsilon}{\partial t}+\varepsilon v\cdot\nabla_x f_\varepsilon-\nabla_x\psi_\varepsilon\cdot \nabla_v f_\varepsilon=\divop_v\left(f_\varepsilon v+\nabla_v f_\varepsilon+Q_{CS}^{\phi_\varepsilon}(f_\varepsilon,f_\varepsilon)\right),
\end{equation}
where the scaled Cucker--Smale operator reads 
$$Q_{CS}^{\phi_\varepsilon}(f_\varepsilon,f_\varepsilon)(t,x,v)=f_\varepsilon(t,x,v) \int_{\RR^N}\int_{\RR^N}\phi_\varepsilon (\vert x-y\vert)(v-w)f_\varepsilon(t,y,w)\,dy\,dw.$$
The interaction potential will be scaled as follows
\begin{equation}\label{influencefunction.regular.form}
\phi_\varepsilon(r)=\frac{1}{(\varepsilon^2+c_\lambda r^2)^\lambda},
\end{equation}
for some $\lambda$-dependent coefficient $c_\lambda$. As it is apparent now, the formal limit $\varepsilon\rightarrow 0$ amounts to singular interactions modeled by the Riesz kernel $\phi_0$ of order $N-2\lambda$. 
In a general context, we can see \eqref{VPFPCS-dimensionless-hyperbolicscaling.Intro.form}-\eqref{influencefunction.regular.form} as a way to obtain an approximate sequence of solutions whose macroscopic quantities converge towards a measure-valued solution of \eqref{Euler-limit.intro}-\eqref{influencefunction.singular.form} in a weak sense. Apart from that hyperbolic scaling, we might wonder about other relevant scalings that have been considered in the literature (e.g. parabolic and intermediate scaling). For instance, see \cite{BellouquidCalvoNietoSoler-Intermedio,NietoPoupaudSoler-Hiperbolico,PoupaudSoler-Parabolico} for a comprehensive study of the intermediate, hyperbolic and parabolic limits in the Vlasov--Poisson--Fokker--Planck system. We will extend our results to an appropriately chosen intermediate scaling where the velocity diffusion effects (that are inherited from the randomness in the microscopic equations) does not dominate, but disappears in the limit. 

The main difficulty both in the passage to the limit and in analyzing the limiting hydrodynamic systems obviously relies on handling with commutators. The commutator in the hydrodynamic limit reads
$$
(\phi_\varepsilon*j_\varepsilon)\rho_\varepsilon-(\phi_\varepsilon*\rho_\varepsilon)j_\varepsilon=\int_{\RR^{N}}\frac{1}{(\varepsilon^2+c_\lambda\vert x-y\vert^2)^\lambda}\left(\rho_\varepsilon(t,x)j_\varepsilon(t,y)-\rho_\varepsilon(t,y)j_\varepsilon(t,x)\right)\,dy,
$$
where $\rho_\varepsilon$ and $j_\varepsilon$ stand for the density and current of particles associated with the particles distribution function $f_\varepsilon$, i.e.,
$$\rho_\varepsilon(t,x):=\int_{\RR^N}f_\varepsilon(t,x,v)\,dv,\hspace{0.5cm}j_\varepsilon(t,x):=\int_{\RR^N} vf_\varepsilon(t,x,v)\,dv.$$
Naturally, we have some difficulties when trying to give some sense to the corresponding commutator when $\varepsilon\rightarrow 0$ since we do not expect $\rho_\varepsilon$ and $j_\varepsilon$ to converge stronger than in the weak-star sense of finite Radon measures. This is due to the fact that the distribution function and some of its functional moments with respect to velocity will be only bounded in $L^1$, independently of $\varepsilon$. The main result of this paper then reads as follows (for a more precise statement see Section \ref{Section-Hyperbolic}).

\begin{theo}\label{Convergence.aproximate.solutions-intro.teo}
Under appropriate hypothesis on the initial data $f_\varepsilon^0$ and the external forces $-\nabla\psi_\varepsilon$, let $f_\varepsilon$ be the smooth solutions to (\ref{VPFPCS-dimensionless-hyperbolicscaling.Intro.form}) with $\lambda\in (0,1/2)$. Then, the macroscopic quantities $\rho_\varepsilon$ and $j_\varepsilon$ converge in a weak sense to some finite Radon measure $\rho$ and $j$ that solve the Cauchy problem associated with the following Euler-type system in the distributional sense
\begin{equation}\label{Euler-limit-j.intro}
\left\{
\begin{array}{ll}
\displaystyle \partial_t\rho +\divop j=0, & x\in\RR^N,\,t\in [0,T),\\
\displaystyle \rho\,\nabla\psi+j=(\phi_0*j)\rho-(\phi_0*\rho)j, & x\in\RR^N,\, t\in [0,T)\\
\displaystyle \rho(0,\cdot)=\rho^0, & x\in \RR^N.
\end{array}
\right.
\end{equation}
\end{theo}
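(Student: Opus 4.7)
The plan is to follow the classical moments strategy for hydrodynamic limits of kinetic equations. The first step is to derive $\varepsilon$-uniform a priori estimates on $f_\varepsilon$: mass conservation yields $\|f_\varepsilon\|_{L^\infty_tL^1_{x,v}}\leq\|f_\varepsilon^0\|_{L^1}$, while the Fokker--Planck dissipation $\divop_v(vf_\varepsilon+\nabla_v f_\varepsilon)$ combined with the dissipative structure of the Cucker--Smale operator should give control on the relative entropy of $f_\varepsilon$ with respect to the Maxwellian $M(v)$ and on sufficiently many velocity moments $\int_{\RR^N}\vert v\vert^k f_\varepsilon\,dv$ to secure tightness of $\rho_\varepsilon$ and $j_\varepsilon$ as finite Radon measures, uniformly in $\varepsilon$ and locally in $t$.

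Next, integrating (\ref{VPFPCS-dimensionless-hyperbolicscaling.Intro.form}) in $v$ against $1$ and $v$ and using standard integration by parts leads to the approximate fluid system
\begin{align*}
&\partial_t\rho_\varepsilon+\divop j_\varepsilon=0,\\
&\varepsilon\partial_t j_\varepsilon+\varepsilon\divop P_\varepsilon+\rho_\varepsilon\nabla\psi_\varepsilon+j_\varepsilon=(\phi_\varepsilon*j_\varepsilon)\rho_\varepsilon-(\phi_\varepsilon*\rho_\varepsilon)j_\varepsilon,
\end{align*}
where $P_\varepsilon(t,x)=\int_{\RR^N}v\otimes v\,f_\varepsilon(t,x,v)\,dv$ is the kinetic pressure tensor. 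The continuity equation is linear and passes to the limit immediately, identifying the limit $\rho$ as a weak-$*$ limit along a subsequence. In the momentum balance, the $\varepsilon$-prefactored terms $\varepsilon\partial_tj_\varepsilon$ and $\varepsilon\divop P_\varepsilon$ vanish in $\mathcal{D}'$ because the moment and entropy bounds control them in suitable negative-order spaces whose norms are absorbed by the prefactor.

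The core analytic difficulty is the passage to the limit in the nonlocal bilinear commutator, which I would treat by antisymmetrization. Testing against a smooth compactly supported vector field $\varphi$ gives
\begin{equation*}
\int_{\RR^N}\varphi\cdot\bigl[(\phi_\varepsilon*j_\varepsilon)\rho_\varepsilon-(\phi_\varepsilon*\rho_\varepsilon)j_\varepsilon\bigr]\,dx=\tfrac{1}{2}\iint\phi_\varepsilon(\vert x-y\vert)\bigl[j_\varepsilon(y)\rho_\varepsilon(x)-\rho_\varepsilon(y)j_\varepsilon(x)\bigr]\cdot\bigl[\varphi(x)-\varphi(y)\bigr]\,dx\,dy.
\end{equation*}
Since $\varphi$ is Lipschitz and $\phi_\varepsilon(r)\leq c_\lambda^{-\lambda}r^{-2\lambda}$ pointwise, the kernel $\phi_\varepsilon(\vert x-y\vert)\vert\varphi(x)-\varphi(y)\vert$ is dominated by $C\vert x-y\vert^{1-2\lambda}$, which is locally integrable precisely in the regime $\lambda\in(0,1/2)$ of the statement. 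The resulting symmetrized kernel then defines a continuous bilinear form on pairs of tight Radon measures, and the weak-$*$ convergence of the product measures $\rho_\varepsilon\otimes j_\varepsilon$ yields the identification of the commutator limit with the expression in \eqref{lc}. The external force term $\rho_\varepsilon\nabla\psi_\varepsilon$ is handled separately using strong convergence of $\nabla\psi_\varepsilon$ provided by the hypotheses.

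The main obstacle is precisely this commutator limit: individually, none of $\phi_\varepsilon*\rho_\varepsilon$, $\phi_\varepsilon*j_\varepsilon$, nor the products $(\phi_\varepsilon*j_\varepsilon)\rho_\varepsilon$ and $(\phi_\varepsilon*\rho_\varepsilon)j_\varepsilon$ admit uniform bounds better than measures, since the Riesz kernel is of order $N-2\lambda$; the antisymmetric cancellation is what reduces the effective singularity by one order, and this is what makes $\lambda<1/2$ the sharp threshold for the method. Two secondary issues that will require care are: ensuring uniform integrability at infinity so that the bilinear commutator identity extends from compactly supported test functions to the global pairing, and recovering the initial condition $\rho(0,\cdot)=\rho^0$ in the distributional sense, which follows from the weak-$*$ continuity in time built into the approximate continuity equation.
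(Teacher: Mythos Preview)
Your outline captures the overall architecture correctly: moment equations, antisymmetrization of the commutator, and the observation that for $\lambda\in(0,1/2)$ the symmetrized kernel $H_\varphi^{\lambda,0}(t,x,y)=\phi_0(|x-y|)(\varphi(t,x)-\varphi(t,y))$ is bounded by $C|x-y|^{1-2\lambda}$ and hence continuous across the diagonal. Two points, however, are genuine gaps.

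First, the a priori estimates. You propose to control moments via the relative entropy with respect to the Maxwellian, but the paper explicitly notes that the scaled system \eqref{VPFPCS-dimensionless-hyperbolicscaling.Intro.form} does \emph{not} enjoy a closed entropy inequality, essentially because of the singular scaling of $\phi_\varepsilon$ and the absence of $L^p$ bounds on $\rho_\varepsilon$ beyond $p=1$. The estimates the paper actually uses come from integrating the equation against $|v|^k$ and exploiting the friction term $\divop_v(f_\varepsilon v)$ directly: this yields $\|\,|v|^2 f_\varepsilon\|_{L^1_t L^1_{x,v}}\leq C$ and, crucially, the alignment-dissipation bound $\int_0^T\!\!\iiiint \phi_\varepsilon(|x-y|)\,|v-w|^2 f_\varepsilon(t,x,v)f_\varepsilon(t,y,w)\,dx\,dy\,dv\,dw\,dt\leq C$. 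No entropy is involved.

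Second, and more seriously, you write that ``the weak-$*$ convergence of the product measures $\rho_\varepsilon\otimes j_\varepsilon$ yields the identification of the commutator limit'', but you never establish that $\rho_\varepsilon\otimes j_\varepsilon\overset{*}{\rightharpoonup}\rho\otimes j$ in $L^2(0,T;\mathcal{M}(\RR^{2N}))^N$. This does \emph{not} follow from the separate weak-$*$ convergences $\rho_\varepsilon\overset{*}{\rightharpoonup}\rho$ and $j_\varepsilon\overset{*}{\rightharpoonup}j$: a product of two weakly convergent sequences need not converge to the product of the limits. The paper closes this gap by extracting extra time-compactness for $\rho_\varepsilon$ from the continuity equation: since $\partial_t\rho_\varepsilon=-\divop j_\varepsilon$ is bounded in $L^2(0,T;W^{-1,1})$, an Ascoli--Arzel\`a argument upgrades the convergence of $\rho_\varepsilon$ to $C([0,T];\mathcal{M}(\RR^N)\text{-weak\,*})$. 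With one factor converging in this ``strong-in-time, weak-in-space'' sense and the other weak-$*$, the tensor product passes to the limit (Corollary~\ref{Joint_convergence.cor}). You allude to time-continuity only at the very end, for the initial datum; in fact it is the linchpin of the nonlinear limit, and your argument is incomplete without it.
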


The most sketchy part in the above result shows how to take limits in the product $\rho_\varepsilon (t,x) j_\varepsilon(t,y)$ and rigorously pass to the limit in the commutator. Like in other related systems, one can solve such issue for $\lambda\in (0,1/2)$ by considering the above term in weak form and cancelling the singularity through the use of appropriate test functions. Nevertheless, the same cannot be directly ensured for the endpoint case $\lambda=1/2$ (compare with the \textit{2D Euler equations in vorticity formulation} \cite{Schochet}), that indeed agrees with the critical value of the parameter $\lambda$ over which one cannot expect unconditional flocking in the Cucker--Smale system (see \cite{Carrillo-Improve,HaTadmor}). In the 2D Euler system in vorticity formulation, some sort of logarithmic Morrey space estimate for the sequence of approximated vorticities that holds true for nonnegative $H^{-1}(\RR^2)$ initial vorticities is needed in order to prevent the system from concentrations on the diagonal and to pass to the limit (see \cite{Delort,Majda,VecchiWu}). Note that another elemental difference with such case is the type of symmetries of the kernel: while the Biot--Savart kernel in the 2D Euler equation is odd, our Riesz kernels are even and do not admit similar cancellations. In our case, our scaled systems do not enjoy any additional estimate with respect to Morrey--type norms apart from a bound on what we will call the \textit{dissipation of the kinetic energy due to alignment interactions} between particles, i.e.,
$$\int_0^T\int_{\RR^N}\int_{\RR^N}\int_{\RR^N}\int_{\RR^N}\vert v-w\vert^2\phi_\varepsilon(\vert x-y\vert)f_\varepsilon(t,x,v)f_\varepsilon(t,y,w)\,dx\,dy\,dv\,dw\,dt\leq C,\ \ \forall\,\varepsilon>0.$$
Such bound will help us on proving that the singular limit $\varepsilon\rightarrow 0$ involves no concentration in the limiting nonlinear term for all the values of the parameter $\lambda\in (0,N/2)$ in the same spirit as in \cite{Schochet} or the 2D-case in \cite{NietoPoupaudSoler-Hiperbolico}. Despite the aforementioned lack of concentrations, we will show that the singular limit $\varepsilon\rightarrow 0$ in the kernel $\phi_\varepsilon$ will only allow rigorously passing to the limit in the nonlinear term for the restricted interval of the parameter $\lambda\in (0,1/2]$ as a consequence of the lack of extra estimates for the density and current of particles.  

The above quantity also appears in some preceding results \cite{CarrilloChoiKarper,KangVasseur,KarperMelletTrivisa} as a dissipation term of a ``generalized entropy'' that is composed of the Boltzmann entropy, the kinetic energy and the second order moment with respect to position. The symmetries of the aforementioned local alignment forces along with the boundedness of the influence function \eqref{blc} near $r=0$ and an appropriate choice of the scaling in the mesoscopic system ensure the presence of such entropy inequality that supports a relative entropy method when passing to the macroscopic system. However, as it will be checked later, we cannot ensure that our scaled system (\ref{VPFPCS-dimensionless-hyperbolicscaling.Intro.form}) do enjoy such kind of entropy results. It stands to reason that it is due to the singular scaling for $\phi_\varepsilon$ and the lack of $L^p$ norms for the macroscopic density $\rho=\rho(t,x)$. Let us consequently advert that in our hyperbolic and intermediate limits, the presence of the velocity friction term $\divop_v(f_\varepsilon v)$ in the Fokker--Planck differential operator is essential to derive some of the only possible estimates for the current of particles that can be obtained from such scaled systems, in contrast with the scaling in \cite{KangVasseur}.

Once Theorem \ref{Convergence.aproximate.solutions-intro.teo} is proved, one can formally cancel a factor $\rho$ in the limiting equation (\ref{Euler-limit-j.intro}) by considering $j=\rho\,u$. Then, we are formally led to Equation (\ref{Euler-limit.intro}) and the commutator in (\ref{lc}). This is somehow surprising because such commutator have not a clear sense for $\rho \in L^1(\RR^N)$. For such limiting Equation (\ref{Euler-limit.intro}), that couples the velocity field and density function, you can deal with the existence of regular strong solutions for the Cauchy problem associated with the system \eqref{Euler-limit.intro}-\eqref{influencefunction.singular.form}. To this end, one has to obtain accurate bounds for the commutator operator $[M_u,I_{N-2\lambda}]$ of a Riesz potential and a multiplication operator. The attainment of such bounds in $L^p$ spaces is a classical topic in harmonic analysis (see \cite{Chanillo,CoifmanRochbergWeiss,CruzUribe,Perez}). In particular, it is known, by the \textit{Hardy--Littlewood--Sobolev theorem of fractional integrals} \cite[Theorem 1.2.1]{Stein}, that the Riesz potential of order $N-2\lambda$ is a bounded linear operator between the next spaces
$$I_{N-2\lambda}:L^p(\RR^N)\longrightarrow L^q(\RR^N),$$
where $1<p<q<\infty$ and $1/p-(N-2\lambda)/N=1/q$. The boundedness when $p=1$ (then $q=N/2\lambda$) is not true in the $L^p-L^q$ sense but in the $L^p-L^{q,\infty}$ way, where $L^{q,\infty}(\RR^N)$ stands for the weak Lebesgue space (or Lorentz spaces). Regarding the commutator, one recovers the boundedness of the next linear operator
$$[M_u,I_{N-2\lambda}]:L^p(\RR^N)\longrightarrow L^q(\RR^N,\RR^N),$$
if, and only if, $u\in BMO(\RR^N,\RR^N)$ \cite{Chanillo}. Nevertheless, the endpoint case $p=1$ cannot be recovered in the $L^p-L^{q,\infty}$ sense \cite{CruzUribe}, which is the natural framework according to the lack of estimates for $\rho$. The main difficulty when dealing with the Equation (\ref{Euler-limit.intro}) is that it is not clear whether the operator $u\longmapsto [M_u,I_{N-2\lambda}]\rho$ is bounded from some Banach space to itself. Note that it is a key property to be checked in order for the classical existence techniques involving fixed-point theorems to work. We will show that this is the case for some well chosen Banach space of function of Lipschitz type enjoying some summability properties. Namely, such normed space will be denoted by $L^1(0,T;W^{1,kp_1,kp_2}(\RR^N,\RR^N))$, where
\begin{equation}\label{normed.space.W.form}
W^{k,p,q}(\RR^N):=(W^{k-1,p}\cap W^{k-1,q}\cap W^{k,\infty})(\RR^N),
\end{equation}
that is a Banach space when endowed with the norm
$$\Vert f\Vert_{W^{k,p,q}(\RR^N)}:=\Vert f\Vert_{W^{k-1,p}(\RR^N)}+\Vert f\Vert_{W^{k-1,q}(\RR^N)}+\Vert f\Vert_{W^{k,\infty}(\RR^N)}.$$
Such commutator estimates constitute the cornerstone to construct our strong solutions to (\ref{Euler-limit.intro}) via fixed point arguments. This will be the purpose of our second main result in this paper.

\begin{theo}\label{Euler.limit.eq.existence.intro.theo}
Let $\lambda$ be any exponent in $(0,N/2)$ and $1\leq p_1<p_2\leq \infty$ such that
$$\frac{1}{p_2}<1-\frac{2\lambda}{N}<\frac{1}{p_1}.$$
Consider any (large enough) positive integer $k$ so that
$$k>\max\left\{\frac{N}{2\lambda}-1,\frac{N}{2\lambda p_1}\right\}.$$
Then, for any positive radius $R$ there exists some positive constant $\delta_R$ depending on $R$ such that the system (\ref{Euler-limit.intro}) admits one, and only one solution $u\in L^1(0,T;W^{1,kp_1,kp_2}(\RR^N,\RR^N))$, $\rho\in L^\infty(0,T;L^{p_1}(\RR^N)\cap L^{p_2}(\RR^N))$ with $\Vert u\Vert_{L^1(0,T;W^{1,kp_1,kp_2}(\RR^N,\RR^N))}\leq R$, as long as the external force $F=-\nabla\psi$ and the initial datum $\rho^0$ are taken so that they fulfil the next conditions
$$\Vert \nabla\psi\Vert_{L^1(0,T;W^{1,kp_1,kp_2}(\RR^N,\RR^N))}<R\ \mbox{ and }\ \Vert \rho^0\Vert_{W^{k,p_1,p_2}(\RR^N)}<\delta_R.$$
\end{theo}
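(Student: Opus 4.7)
The natural strategy is a Picard-type iteration alternating between the algebraic constitutive relation --- the second equation of \eqref{Euler-limit.intro} --- and the continuity equation. Given a candidate density $\rho$, the constitutive relation rewrites as
\begin{equation*}
\mu\,u = -\nabla\psi - \frac{1}{c_\lambda^\lambda}\,[M_u,I_{N-2\lambda}]\rho,
\end{equation*}
an affine equation for $u$ in $L^1(0,T;W^{1,kp_1,kp_2}(\RR^N,\RR^N))$ that I would solve by the contraction mapping principle on a ball of radius $R$, provided $\Vert\rho\Vert$ is sufficiently small. The resulting $u\in L^1(0,T;W^{1,\infty}(\RR^N,\RR^N))$ is Lipschitz in space, so the continuity equation $\partial_t\rho+\divop(\rho u)=0$ admits a unique classical solution along its flow; propagating the $W^{k,p_1,p_2}$ norm of $\rho$ by differentiating the equation and applying Gronwall then closes the whole scheme as a single fixed-point problem in $\rho$.

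The cornerstone of the whole argument is a commutator estimate of the schematic form
\begin{equation*}
\Vert[M_u,I_{N-2\lambda}]\rho\Vert_{L^1(0,T;W^{1,kp_1,kp_2})} \;\leq\; C\,\Vert u\Vert_{L^1(0,T;W^{1,kp_1,kp_2})}\,\Vert\rho\Vert_{L^\infty(0,T;W^{k,p_1,p_2})},
\end{equation*}
together with its bilinear Lipschitz-in-$(u,\rho)$ variant. The proof would proceed by distributing derivatives with the Leibniz rule into top-order pieces $[M_{\partial^\alpha u},I_{N-2\lambda}]\partial^\beta\rho$ with $\vert\alpha\vert+\vert\beta\vert\leq 1$, plus symmetric lower-order terms, and estimating each using the two classical pillars recalled in the introduction: the Hardy--Littlewood--Sobolev fractional integration theorem, whose non-endpoint range is precisely opened up by the hypothesis $1/p_2<1-2\lambda/N<1/p_1$, and Chanillo's $BMO$ commutator theorem, applicable since $u\in W^{1,\infty}\hookrightarrow BMO$. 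The integer $k$ is taken large enough --- quantitatively, by the explicit threshold in the statement --- so that the H\"older products $\Vert\nabla^j u\Vert_{L^{kp_i}}\Vert\nabla^{k-j}\rho\Vert_{L^{p_i}}$ obtained after distributing derivatives always close, while the $W^{k,\infty}$ component of the target norm absorbs the highest-order contribution where the Riesz potential offers no further integrability.

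Once the commutator estimate is secured, the contraction argument for $\Lambda_\rho:u\mapsto -\nabla\psi/\mu-(\mu c_\lambda^\lambda)^{-1}[M_u,I_{N-2\lambda}]\rho$ on the ball $\{\Vert u\Vert\leq R\}$ is immediate for $\Vert\rho\Vert<\delta_R$ with explicit $\delta_R(R)$, and, combined with a Gronwall bound for the continuity equation, it produces a fixed point $(\rho,u)$ under the smallness conditions on $\nabla\psi$ and $\rho^0$ of the theorem. Uniqueness in the prescribed ball follows from the same commutator estimate applied to the difference of two solutions. The main obstacle --- and the technical core of the argument --- is the endpoint case $p_1=1$, where the counterexamples of \cite{CruzUribe} preclude any direct application of Chanillo's theorem. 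There one must instead exploit the pointwise representation
\begin{equation*}
[M_u,I_{N-2\lambda}]\rho(x) \;=\; \int_{\RR^N}\bigl(u(x)-u(y)\bigr)\,\vert x-y\vert^{2\lambda-N}\,\rho(y)\,dy,
\end{equation*}
use the Lipschitz bound $\vert u(x)-u(y)\vert\leq\Vert\nabla u\Vert_{L^\infty}\vert x-y\vert$ to reduce the order of the kernel singularity by one, and balance the near and far contributions with H\"older's inequality against $\rho$ in $L^{p_1}$ and $L^{p_2}$ respectively; this Lipschitz trick is the substitute for $BMO$ at the endpoint and is what justifies the particular shape of the mixed norms $W^{k,p_1,p_2}$ and $W^{1,kp_1,kp_2}$.
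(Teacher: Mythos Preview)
Your proposal has a genuine gap in how the scheme closes, and it misidentifies the role of the integer $k$. You propose to propagate the $W^{k,p_1,p_2}$ norm of $\rho$ through the continuity equation and then run a fixed point in $\rho$. But a velocity field that is only $W^{1,\infty}$ in space cannot propagate $W^{k,p_1,p_2}$ regularity of $\rho$ once $k\geq 2$: already one spatial derivative of the continuity equation produces the term $\rho\,\partial_i\divop u$, and you have no control on second derivatives of $u$ in $W^{1,kp_1,kp_2}=L^{kp_1}\cap L^{kp_2}\cap W^{1,\infty}$. The scheme as written therefore does not close, which is consistent with the statement of the theorem---it only asserts $\rho\in L^\infty(0,T;L^{p_1}\cap L^{p_2})$.

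The paper instead runs the fixed point directly in $u$, via $\Phi[u]:=\mathcal{C}[\mathcal{D}[u],u]-\nabla\psi$, where $\mathcal{D}[u]$ is the density obtained by pushing $\rho^0$ along the flow of $u$. The commutator estimate actually used requires no derivatives on $\rho$: only $\Vert\rho\Vert_{L^{p_1}}$ and $\Vert\rho\Vert_{L^{p_2}}$ appear on the right-hand side, via the pointwise Lipschitz trick (used throughout, not just at the endpoint $p_1=1$) for the $W^{1,\infty}$ part and Hardy--Littlewood--Sobolev after the splitting $[M_u,I_{N-2\lambda}]\rho=u\,(\phi_0*\rho)-\phi_0*(\rho u)$ for the $L^{kp_i}$ parts; Chanillo's theorem is never invoked. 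The integer $k$ enters only in the contraction step, when estimating $\Vert\mathcal{D}[u_1]-\mathcal{D}[u_2]\Vert_{L^{p_i}}$: one Taylor-expands $\rho^0$ to order $k$ between the two flows. Terms of order $\leq k-1$ pair $D^\gamma\rho^0\in L^{p_i}$ with $\Vert X^{u_1}-X^{u_2}\Vert_{L^\infty}$, while the order-$k$ remainder pairs $D^\gamma\rho^0\in L^\infty$ with $\Vert(X^{u_1}-X^{u_2})^\gamma\Vert_{L^{p_i}}=\Vert X^{u_1}-X^{u_2}\Vert_{L^{kp_i}}^k$, controlled by $\Vert u_1-u_2\Vert_{L^1(0,T;L^{kp_i})}^k$. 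This Taylor device---not a Leibniz rule inside the commutator---is the reason for the exponents $kp_i$ in the space for $u$ and for the $W^{k,p_1,p_2}$ regularity imposed on the \emph{initial datum} $\rho^0$ alone.
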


Such result then entails that the friction term tends to slow down the particles up to a state of absence of any motion unless some force $F=-\nabla \psi$ (inducing a shear flow in the hydrodynamic description) is assumed to act on the system. On the one hand, the velocity friction is natural in the sense that it is imposed by the conditions of the medium. On the other hand, the presence of a force $F$ can also be justified in certain biological settings like in the \textit{soaring flight} of some flocks of big birds like pelicans or seagulls. Soaring birds avoids flapping in the presence of a thermal, since they can use wind currents to self-propel. See \cite[Chapter 5]{Norberg} for a comprehensive classification of the kinds and mechanisms of soaring flights along with some pictures (e.g., slope, thermal, gust, frontal, wave and dynamic soaring). Another reference where such biological feature is considered (i.e., some kind of ``external'' force is assumed to propel birds) is the coupled macroscopic--mesoscopic system proposed in \cite{CarrilloChoiKarper} consisting in a Vlasov--Fokker--Planck equation with local alignment effects describing the motion of birds in the core of a shear flow associated with a fluid obeying the Navier--Stokes equation of Fluid Mechanics. See also \cite{BaeChoiHaKang} for a similar analysis of global existence of strong solutions of such coupled systems.

The paper is organized as follows. Section \ref{Section-FlockingModels} is devoted to introduce the model we are interested in at the different scales of description that have been studied in the literature, namely, microscopic, mesoscopic and macroscopic. Along this paper we will focus on the relationship between the mesoscopic and macroscopic ones via hydrodynamic limits. This is the content of Section \ref{Section-Hyperbolic} and \ref{Section-OtherLimits}, where we will focus on different type of scalings where the rigorous convergence in the hydrodynamic limit can be performed. Specifically, in Section \ref{Section-Hyperbolic} we will concentrate on a hyperbolic and singular limit of the kinetic Cucker--Smale model whilst Section \ref{Section-OtherLimits} shows that such techniques can be adapted to other relevant hydrodynamic limits of intermediate type. We will also address the frictionless case along with a hydrodynamic limit linking the Rayleigh--Helmholtz friction with the standard linear friction. Section \ref{Section-AnalisisLimit} presents the existence and uniqueness of global in time solutions of higher regularity for the limiting macrocopic systems arising from the preceding hydrodynamic and singular limits. To conclude, we derive in Appendix \ref{Appendix-Scaling} the preceding scalings of the kinetic equation through a nondimensionalization procedure and an appropriate choice of the dimensionless parameters. 

\section{Different scales of description in flocking dynamics models}\label{Section-FlockingModels}

Once we have introduced the main targets in this paper, let us sketch a brief overview about the main models of flocking dynamics and the basic ideas underlying the kinetic Cucker--Smale model. From a biological outlook, it has been deeply studied in the context of starlings. For instance, \cite{HildenbrandtCarereHemelrijk} shows a study of the social behavior of tens of thousands of European Starlings (\textit{Sturnus vulgaris}) in Roma, the rules that guide them into an ordered motion culminating with the formation of a flock, which propagates and modifies its density and shape through the amazing aerial maneuvers taking place in winter in the period before migration. Such study, focuses on the internal natural laws and on explaining how can they lead to the formation of convoluted \textit{patterns} in flocking. Computer simulation of complex systems is also a key ingredient to test whether the basic rules of organization yields the observed patterns. The first computer simulation of flocking dynamics was pioneered by Reynodls \cite{Reynolds} with his simulation program \textit{Boids}. 

The original model of flocking proposed by Reynolds relied in three different basic rules: \textit{separation}, \textit{alignment} and \textit{cohesion}. Roughly speaking, there exists some type of repulsive short range interaction (separation) that prevents two individuals in the crowd from getting too close. In the same way, some sort of attractive long range interaction (cohesion) must exist in order for the population to form clusters that eventually flock. Finally, an intermediate third kind of interaction (alignment), which is the actual mechanism governing the flocking behavior, should exist. Such interaction may be described in terms of a set of rules steering towards the average heading of neighbors. 

\subsection{Microscopic systems}

Based on such idea, Cucker and Smale designed a model by means of a system of ODE for $n$ interacting particles under the influence of self-alignment effects. Specifically, consider a collection of $n$ birds with identical masses $m$ labelled by its positions $x_1(t),\ldots, x_n(t)\in\RR^3$ and its velocities $v_1(t),\ldots,v_n(t)\in\RR^3$. The idea is that each bird should modify its velocity to mimic some average relative velocity of the remaining $n-1$ birds. For instance, the weights may depend decreasingly on the distance between each bird (the closer the birds, the stronger the influence). Then, we are led to the following kind of $n$-particles model
\begin{equation}\label{CuckerSmale_model.eq}
\left\{
\begin{array}{ll}
\hspace{0.35cm}\displaystyle \frac{d x_i}{dt}=v_i, & t\geq 0,\\
\displaystyle m\,\frac{d v_i}{dt}=\frac{1}{n}\sum_{j=1}^n\phi(\vert x_i-x_j\vert) (v_j-v_i), & t\geq 0,
\end{array}
\right.
\end{equation}
for every index $i\in\{1,\ldots,n\}$. Here, $\phi=\phi(r)$ is called the interaction kernel or \textit{influence function} and it is usually normalized as follows
$$\phi(r)=Ka(r),$$
for some $a=a(r)\in (0,1]$ such that $a(0)=1$ (finite maximum influence) and some positive constant $K$ standing for the strength of the influence. Here $a(r)$ should be assumed a decreasing function of the distance $r$. The classical choice of the weight function $a(r)$ is devoted to Cucker and Smale and reads
\begin{equation}\label{Weight_influence_function.form}
a(r)=\frac{1}{(1+r^2)^\lambda},\ \ r\geq 0,
\end{equation}
for some positive exponent $\lambda$ measuring the asymptotic fall-off of the long range interactions. One of the crucial features of the Cucker--Smale model (\ref{CuckerSmale_model.eq}) lies in the symmetry of the interactions. This may be interpreted as the absence of leaders in the flock: all the birds enjoy the same role and are influenced by the remaining ones in exactly the same manner. Regarding the analysis of (\ref{CuckerSmale_model.eq}) (see e.g., \cite{HaTadmor}), the presence of such symmetric interactions leads to some conserved and controlled quantities such as the linear momentum of the $n$ agents (which remains constant), the center of mass (which propagates at constant velocity) or the global kinetic energy (which is monotonically decreasing but lower bounded away from zero). Notice that in this model, neither the separation nor the cohesion dynamic rules are obvious. The only clues supporting such laws are the fall-off of the influence functions at large distances and the finiteness of the interaction kernel at $r=0$. However, there is some analysis supporting the effectiveness of the alignment effects in the particular case of the Cucker--Smale choice (\ref{Weight_influence_function.form}) for $a$. Namely, when the fall-off of the tail of the influence function is sufficiently slow in the sense that
$$\int^{\infty}a(r)\,dr=\infty,$$
then, the conservation of linear momentum and the decrease of kinetic energy entail ``unconditional flocking''. In the particular case of the Cucker--Smale influence function, it amounts to choosing $\lambda\in (0,1/2)$. The terms \textit{unconditional flocking} mean that, independently on the choice of the initial configurations $x_1(0),\ldots,x_n(0)$ and $v_1(0),\ldots,v_n(0)$, the maximum distance between any couple of agents is uniformly bounded at all times, and the velocities align towards a common velocity at infinity, i.e.,
$$\sup_{t\geq 0}\max_{i\neq j}\vert x_i(t)-x_j(t)\vert<+\infty\ \mbox{ and }\ \lim_{t\rightarrow +\infty}\max_{i\neq j}\vert v_i(t)-v_j(t)\vert=0.$$
Indeed, the Cucker--Smale case is much simpler since all the velocities align towards the conserved mean velocity, i.e.,
$$\frac{1}{n}\sum_{i=1}^n v_i(t)=\frac{1}{n}\sum_{i=1}^n v_i(0),$$
which can be easily prescribed through the initial configuration. The flocking dynamics is called \textit{conditional flocking} when the above two properties only hold for some specific well chosen initial configurations. This is the case when $\lambda>1/2$ (see \cite{CuckerSmale2} for some specific profiles).  The particular case $\lambda=1/2$ is dominated again by unconditional flocking although it has to be studied separately through a bootstrap argument.

Note that the above model (\ref{CuckerSmale_model.eq}) assumes symmetry of the interactions. Nevertheless, asymmetric interactions in populations with leaders is a very common situation in Biology that cannot be explained by the classical Cucker--Smale model. Furthermore, the presence of symmetry in the interactions entails the next characteristics of the model. First, the final bulk velocity of the population does not depend on the evolution, but only on the initial configuration of the $n$ birds since it agrees with the mean velocity, which remains constant. Second, the changes on the velocity of each agent are controlled by the (weighted) mean relative velocity of all the remaining agents. Consequently, two distinguished groups in the same population $G_1, G_2$ separated by a large enough distance ($G_2$ much crowded than $G_1$) would evolve as follows: the presence of the large group $G_2$, although far away from the smaller one, would almost halt the dynamic of the small group $G_1$.

A possible modification of the model to avoid the above features was proposed in \cite{MotschTadmor}. Roughly speaking, the gist is to normalized the pairwise interactions $\phi(\vert x_i-x_j\vert)$ between agents in terms of a relative influence, involving a loss of symmetry. This gives rise to the \textit{Motsch--Tadmor} $n$-particles system:
\begin{equation}\label{MotschTadmor_model.eq}
\left\{
\begin{array}{ll}
\hspace{0.35cm}\displaystyle \frac{dx_i}{dt}=v_i & t\geq 0,\\
\displaystyle m\,\frac{dv_i}{dt}=\frac{K}{\sum_{k=1}^N a(\vert x_i-x_k\vert)} \sum_{j=1}^N a(\vert x_i-x_j\vert)(v_j-v_i), & t\geq 0,
\end{array}
\right.
\end{equation}
Notice that when the agents are concentrated near the same area, then (\ref{MotschTadmor_model.eq}) essentially describes the same dynamics as (\ref{CuckerSmale_model.eq}). On the contrary, the above issue with the spatially inhomogeneous configuration consisting of two distinguished groups $G_1,G_2$ has being corrected since the dynamics of the agents in $G_1$ is dominated by the remaining agents in $G_1$, not $G_2$ (see \cite{MotschTadmor} for a detailed discussion). Despite the loss of the above-mentioned conserved quantities, it is possible to do some analysis yet. Indeed, unconditional flocking in the above Motsch--Tadmor model (\ref{MotschTadmor_model.eq}) is granted as long as the fall-off of the influence function $a(r)$ is slow enough. Specifically,
$$\int^{\infty}a(r)^2\,dr=\infty,$$
which is a bit more restrictive than in the Cucker--Smale case. Furthermore, the bulk velocity $v_\infty$ cannot be recovered just from the mean velocity (now it is not constant), but it depend on the evolution itself. The same properties hold for more general asymmetric models with leaders as it is analyzed in \cite{MotschTadmor}.

\subsection{Mesoscopic systems}

Apart from the above microscopic systems of $n$ particles, the mesoscopic description for $n$ large enough has been studied by means of kinetic-type equation arising from Physical Statistics. Note that when $n$ is high, the $n$-particles systems (\ref{CuckerSmale_model.eq}) and (\ref{MotschTadmor_model.eq}) become unmanageable both from the theoretical and numerical points of view and there is some need for simplified models which aim to preserve the above dynamical properties such as the flocking formation. Let us sketch the classical formal procedure based on the BBGKY hierarchy of particle distributions to achieve mesoscopic systems from microscale descriptions (see \cite{HaTadmor,MotschTadmor}) either in a low density or low interactions regimes. Consider the $n$-particles distribution function $f^n(t,x_1,v_1,\ldots,x_n,v_n),$
which according to the \textit{BBGKY hierarchy} verifies the next Liouville-type PDE
$$\frac{\partial f^n}{\partial t}+\sum_{i=1}^n v_i\cdot \nabla_{x_i}f^n+\frac{K}{n}\sum_{i=1}^n\divop_{v_i}\left(\sum_{j=1}^n a(\vert x_i-x_j\vert)(v_j-v_i)f^n\right)=0.$$
Assume symmetry of $f^n$ with respect to particle's positions and velocities $(x_i,v_i)$.
Considering the martingale distribution function for one particle, one can obtain its evolution equation by integrating with respect to the remaining $n-1$ particles and closing it in terms of the one-point martingale distribution itself and the the two-point martingale distribution function in the interaction term. Then, one formally pass to the limit $n\rightarrow +\infty$ and assumes \textit{molecular chaos} (i.e., independence of the two-point particles distribution, that can be physically attainable both in the low density and low interaction regimes). Consequently, the one-particle distribution function can be shown to verify the \textit{kinetic Cucker--Smale} model or \textit{Ha--Tadmor} model
\begin{equation}\label{HaTadmor_3_dimensions.eq}
\frac{\partial f}{\partial t}+v\cdot \nabla_xf=\divop_v\left(Q_{CS}(f,f)\right), \hspace{0.25cm} (t,x,v)\in \RR_0^+\times \RR^3\times\RR^3,
\end{equation}
where the integral operator $Q_{CS}$ is known as the alignment operator
$$Q_{CS}(f,f)(t,x,v):=f(t,x,v)\int_{\RR^3}\int_{\RR^3}\phi(\vert x-y\vert)(v-w)f(t,y,w)\,dy\,dw.$$
The same formal reasoning yields the \textit{kinetic Motsch--Tadmor} model with $Q_{CS}$ replaced with the Motsch--Tadmor alignment operator
\begin{equation}\label{MostchTadmor.alignment.eq}
Q_{MT}(f,f)(t,x,v)=f(t,x,v)\frac{K\int_{\RR^3}\int_{\RR^3}a(\vert x-y\vert)(v-w)f(t,y,w)\,dy\,dw}{\int_{\RR^3}\int_{\RR^3}a(\vert x-y\vert)f(t,y,w)\,dy\,dw}.
\end{equation}
To our best knowledge, the above mean field limits when $n\rightarrow +\infty$ have only been rigorously established in the Cucker--Smale case in \cite{HaLiu} (regular influence functions) and \cite{MuchaPeszek} (singular influence functions). As far as we know, the mean field limit of the Motsch--Tadmor individual based system remains open as depicted in \cite{MotschTadmor} because asymmetric interactions are involved in such system.

A similar analysis to that of the discrete system (\ref{CuckerSmale_model.eq}) involving symmetry of the interactions once more shows that we also expect unconditional flocking behavior in the kinetic model. First, \cite{HaTadmor} shows the existence of classical solutions of the kinetic Ha--Tadmor model with compactly supported initial data. The corresponding existence theory for measure-valued solutions of the Cauchy problem associated with (\ref{HaTadmor_3_dimensions.eq}) was developed in \cite{HaLiu}. Compared with the discrete system. the flocking analysis again relies on the study of the fluctuations of kinetic energy, i.e.,
$$\Lambda(f)(t):=\int_{\RR^N}\int_{\RR^N}\vert v-u_c\vert^2f(t,x,v)\,dx\,dv,$$
where $u_c(t)=u_c(0)$ stands for the mean velocity of the distribution of particles, which agrees with the velocity of the center of mass. The main idea is to show that the Lyapunov functional $\Lambda(f)(t)$ vanishes exponentially fast when $t\rightarrow +\infty$ under the assumption $\lambda\in (0,1/4]$. This was done through some suboptimal estimates of the propagation of the support that led to such apparent difference with the dynamic behavior of the discrete model (\ref{CuckerSmale_model.eq}), see \cite{HaLiu,HaTadmor}. Nevertheless, such estimates for the propagation of the support were sharped in \cite{Carrillo-Improve} in order to show that exponential unconditional flocking in the above sense actually holds for each exponent $\lambda$ in the whole range $(0,1/2]$.

In what follows, our mesoscopic models will include some other effects such as the presence of a random white noise and a linear friction in the $v$ variables. In addition, as we showed in the Section \ref{Introduction.section}, one may want our system to be exposed to some external (or self-generated) force $F=-\nabla\psi$ for some potential $\psi=\psi(t,x)$. As it is well known, the evolution of the microscopic system of $n$ interacting particles under the above assumptions is governed by the \textit{Langevin equations}:
\begin{equation}\label{CuckerSmale_model.noise.friction.eq}
\left\{\begin{array}{ll}
\displaystyle\hspace{0.26cm}\frac{dx_i}{dt}=v_i, & t\geq 0,\\
\displaystyle m\frac{dv_i}{dt}=-\mu v_i+\frac{1}{n}\sum_{j=1}^n\phi(\vert x_i-x_j\vert)(v_j-v_i)-\nabla_x\psi(t,x_i)+\sqrt{2D}\xi_i(t), & t\geq 0,
\end{array}\right.
\end{equation}
where the stochastic forces $\xi_i(t)$ stand for $\delta$-correlated Gaussian probability distributions
$$\left<\xi_i(t_1)\xi_j(t_2)\right>=\delta_{ij}\delta(t_1-t_2).$$
A similar formal procedure based on the BBGKY formalism amounts to the following \textit{kinetic Cucker--Smale system with Gaussian noise, friction and external forces}
\begin{equation}\label{kinetic.CuckerSmale.frictiondiffusion.eq}
\frac{\partial f}{\partial t}+v\cdot\nabla_x f-\nabla_x\psi\cdot \nabla_v f=\divop_v\left(vf+\nabla_v f+Q_{CS}(f,f)\right).
\end{equation}
where we have intentionally avoided physical units issues for the sake of simplicity. See Appendix \ref{Appendix-Scaling} for a dimensional analysis of the model and the rigorous derivation of the hyperbolic and intermediate scalings that we will be concerned with. The analysis of existence for solutions to this system can be deduced, for instance, from the combination of results given in \cite{B,CS,HaTadmor}.

\subsection{Macroscopic systems}
Before ending this section, let us briefly introduce the above-mentioned macroscopic systems \cite{CarrilloChoiKarper,KangVasseur,KarperMelletTrivisa} of flocking dynamics that have been derived in the literature by means of rigorous hydrodynamic limits. In addition, we will sketch the main results concerning the flocking behavior at large times in some of those models and will compare the kind of scaling proposed by the preceding authors with those considered in this paper and those in \cite{BellouquidCalvoNietoSoler-Intermedio,NietoPoupaudSoler-Hiperbolico,PoupaudSoler-Parabolico} for the Vlasov--Poisson--Fokker--Planck system.

Recall that the choice of a Dirac delta in the Mostch--Tadmor alignment operator (\ref{MostchTadmor.alignment.eq}) leads to what authors call \textit{strong local alignment}, that might also be understood as a relaxation term towards the mean velocity field, i.e., a linear friction term centered at $u$. Specifically,
$$Q_{MT}(f,f)\rightarrow Kf(v-u)\ \mbox{ when }\ a\rightarrow \delta_0$$ where $u$ is the mean velocity field, that can be recovered from the current of particles $j$ via the relation $j=\rho\,u$. See \cite{KarperMelletTrivisa3} for a rigorous derivation of the above property. One of the main features of such term is that despite modelling some kind of alignment in the mesoscopic system, by definition it does provides no effect on the first order moment equation by virtue of its cancellations. Specifically,
$$\int_{\RR^N}v \divop_v(f(v-u))\,dv=-\int_{\RR^N}f(v-u)\,dv=j-\rho\,u=0.$$
Apart form such cancellation in the first order moment equation, it also provides certain structure on the system that helps on closing an equation for the macroscopic entropy functionals and on obtaining closed relative entropy bounds which ensures the strong passage to the limit $\varepsilon\rightarrow 0$ (see the aforementioned references). All the above references include such strong local alignment term at a high order of the parameter $\varepsilon$. 

First, \cite{KarperMelletTrivisa} focuses on the next scaling
$$\varepsilon\frac{\partial f_\varepsilon}{\partial t}+\varepsilon v\cdot\nabla_x f_\varepsilon=\varepsilon\divop_v Q_{CS}^\phi(f_\varepsilon,f_\varepsilon)+\divop_v(\nabla_v f_\varepsilon+f_\varepsilon(v-u_\varepsilon)).$$
Note that it is a regime with high noise and local alignment but low nonlocal alignment of Cucker--Smale type. In contrast with our scaling, the influence function $\phi$ is not singular, but bounded near $r=0$. By virtue of the above property of the strong local alignment, the authors are led to the next limiting system when $\varepsilon\rightarrow 0$
$$\left\{
\begin{array}{l}
\displaystyle \frac{\partial \rho}{\partial t}+\divop(\rho u)=0,\\
\displaystyle\frac{\partial}{\partial t}(\rho u)+\divop(\rho u\otimes u)+\nabla\rho=-\int_{\RR^N}\phi(\vert x-y\vert)(u(t,x)-u(t,y))\rho(t,x)\rho(t,y)\,dy.
\end{array}
\right.$$
Note that such models maintain nonlocal alignment effects but does not include neither a friction term nor a strong alignment one. Second, \cite{KangVasseur} neglects noise effects but considers both linear friction and strong local alignment, i.e.,
$$\varepsilon\frac{\partial f_\varepsilon}{\partial t}+\varepsilon v\cdot\nabla_x f_\varepsilon=\varepsilon\divop(f_\varepsilon v)+\divop_v\left(f_\varepsilon(v-u_\varepsilon)\right).$$
In this case, the limiting system takes the form
$$\left\{
\begin{array}{l}
\displaystyle \frac{\partial \rho}{\partial t}+\divop(\rho u)=0,\\
\displaystyle\frac{\partial}{\partial t}(\rho u)+\divop(\rho u\otimes u)=-u
\end{array}
\right.$$
Hence, although strong local alignment is lost in the macroscopic system, a linear friction term has been recovered in the limit. Finally, let us consider the coupled system in \cite{CarrilloChoiKarper} given by
$$\frac{\partial f_\varepsilon}{\partial t}+v\cdot \nabla_x f_\varepsilon=\varepsilon\divop_v(f_\varepsilon(v-U_\varepsilon))+\divop_v(\nabla_v f_\varepsilon+f_\varepsilon(v-u_\varepsilon)).$$
Recall that $u_\varepsilon$ stands for the macroscopic velocity field of the particles distributions $f_\varepsilon$. Here, $U_\varepsilon$ is the velocity field of a fluid, where the particles move, which evolves according to the incompressible Navier--Stokes system, i.e.,
$$\left\{
\begin{array}{l}
\displaystyle\frac{\partial U_\varepsilon}{\partial t}+(U_\varepsilon\cdot \nabla)U_\varepsilon=-\nabla p_\varepsilon+\nu\Delta U_\varepsilon+\int_{\RR^N}f_\varepsilon(v-U_\varepsilon)\,dv,\\
\displaystyle\divop U_\varepsilon=0,
\end{array}
\right.$$
where $\nu\geq 0$ is the viscosity and $p_\varepsilon$ stands for the pressure of the fluid. In such paper, an entropy method in the spirit of \cite{KarperMelletTrivisa} was derived to pass to the limit $\varepsilon\rightarrow 0$ and the following limiting macroscopic system was obtained
$$\left\{
\begin{array}{l}
\displaystyle \frac{\partial \rho}{\partial t}+\divop(\rho u)=0,\\
\displaystyle\frac{\partial}{\partial t}(\rho u)+\divop(\rho u\otimes u)+\nabla\rho=\rho(U-u),
\end{array}
\right.$$
coupled with the limiting Navier--Stokes system
$$\left\{
\begin{array}{l}
\displaystyle\frac{\partial U}{\partial t}+(U\cdot \nabla)U=-\nabla p+\nu\Delta U+\rho(u-U),\\
\displaystyle\divop U=0,
\end{array}
\right.$$
As it is apparent now, the crucial differences between the above three hydrodynamic models and those we propose in this paper are the presence of inertial effects and the absence of singularity in the influence function at $r=0$. Our system is comparable to those in \cite{BellouquidCalvoNietoSoler-Intermedio,NietoPoupaudSoler-Hiperbolico,PoupaudSoler-Parabolico,GNPS}, where the inertial effects where also lost in the macroscopic limit. Specifically, the kind of limiting systems for the Vlasov--Poisson--Fokker--Planck system takes the form
$$\left\{
\begin{array}{l}
\displaystyle\frac{\partial \rho}{\partial t}+\divop(\rho u)=0,\\
\displaystyle u=-\nabla\psi,
\end{array}
\right.$$
where the potential $\psi=\psi(t,x)$ can be recovered from the density $\rho$ through the Poisson equation
$$\Delta\psi=\theta \rho$$
and $\theta =1$ o $\theta =-1$ depending on the attractive of repulsive character of the Newtonian interactions. In the parabolic case, the limiting system changes the velocity field from $u=-\nabla\psi$ to $u=-\nabla \psi+\frac{\nabla\rho}{\rho}$, that includes viscosity on the continuity equation for $\rho$. It stands to reason that the main difficulty on analyzing our limiting system (\ref{Euler-limit.intro}) will be that the velocity field $u$ cannot be explicitly written in terms of the density $\rho$, but it solves a commutator-type implicit equation.

In \cite{TadmorTan} the pressure-less Euler system is analyzed. As it happens in the microscopic and mesoscopic descriptions, any strong solution must flock. The analysis of global existence of solution is there analyzed for such model in one and two dimensions. In particular, global regularity for subcritical initial data takes place. Indeed, the authors show the existence of critical thresholds in the phase space of the initial data distinguishing between a global regularity regime and a finite-time blow-up regime. If one adds attractive or repulsive effects in the system, similar critical thresholds appear as depicted in \cite{CarrilloChoiTadmorTan} in the $1D$ case.

\section{Hyperbolic and singular hydrodynamic limit}\label{Section-Hyperbolic}
In this section we focus on the hyperbolic scaling for the kinetic Cucker--Smale model with friction and diffusion effects in the velocity variable, see  (\ref{VPFPCS-dimensionless-hyperbolicscaling.Intro.form}), where the regular influence functions $\phi_\varepsilon$ converge towards the singular one $\phi_0$ (see  (\ref{influencefunction.singular.form}) and (\ref{influencefunction.regular.form})). Such scaling is obtained in Appendix \ref{Appendix-Scaling}, where a study of the physical constants of the model and a dimensionless analysis amounts to consider the system
\begin{equation}\label{VPFPCS-dimensionless-hyperbolicscaling.form}
\varepsilon\frac{\partial f_\varepsilon}{\partial t}+\varepsilon v\cdot\nabla_x f_\varepsilon-\nabla_x\psi_\varepsilon\cdot \nabla_v f_\varepsilon=\divop_v\left(f_\varepsilon v+\nabla_v f_\varepsilon+Q_{CS}^{\phi_\varepsilon}(f_\varepsilon,f_\varepsilon)\right).
\end{equation}
under appropriate assumptions on the scaling of the scaled mean thermal velocity and mean free path, along with the scaled effective range and strength of the interactions. See Appendix \ref{Appendix-Scaling} for the details.

Although we will be interested in (\ref{VPFPCS-dimensionless-hyperbolicscaling.form}) most of the time, we will  show that our technique also yields some insight into the understanding of the rigorous hydrodynamic limit in an intermediate hyperbolic scaling for the frictional case and a hyperbolic scaling in the frictionless case that have been introduced in the Appendix \ref{Appendix-Scaling}. This will be done in Section \ref{Section-OtherLimits}. 

\subsection{Hierarchy of moments and limiting distribution}
Before going into the heart of the matter, let us introduce some notation that will be used along the paper and show which is the expected limiting distribution along with the first equations of the hierarchy of velocity moments. We will focus on the next ones
\begin{align*}
\mbox{Density:}&\hspace{0.5cm}\rho_\varepsilon(t,x):=\int_{\RR^N}f_\varepsilon(t,x,v)\,dv,\\
\mbox{Current:}&\hspace{0.5cm}j_\varepsilon(t,x):=\int_{\RR^N}v\,f_\varepsilon(t,x,v)\,dv,\\
\mbox{Velocity field:}&\hspace{0.5cm}u_\varepsilon(t,x):=\frac{j_\varepsilon(t,x)}{\rho_\varepsilon(t,x)},\\
\mbox{Stress tensor:}&\hspace{0.5cm}\mathcal{S}_\varepsilon(t,x):=\int_{\RR^N}v\otimes v\, f_\varepsilon(t,x,v)\,dv,\\
\mbox{Stress flux tensor:}&\hspace{0.5cm}\mathcal{T}_\varepsilon(t,x):=\int_{\RR^N}(v\otimes v)\otimes v f_\varepsilon(t,x,v)\,dv,\\
\end{align*}
First of all, let us note that the average kinetic energy consists of the macroscopic kinetic energy together with the internal energy and it can be obtained as a contraction of the stress tensor. Specifically, take the trace of the stress tensor to obtain
\begin{align*}
E_\varepsilon(t,x):=\frac{1}{2}\trace(\mathcal{S}_\varepsilon(t,x))&=\frac{1}{2}\int_{\RR^N}\vert v\vert^2 f_\varepsilon(t,x,v)\,dv\\
&=\frac{1}{2}\rho_\varepsilon +\frac{1}{2}\int_{\RR^N}\vert v-u_\varepsilon(t,x)\vert^2 f_\varepsilon(t,x,v)\,dv\\
&=: E^{kin}_\varepsilon(t,x)+E^{int}_\varepsilon(t,x).
\end{align*}
Another classical kinetic quantity is the well known energy flux that can be obtained as a new contraction of the stress flux tensor. Namely,
$$Q_\varepsilon(t,x):=\frac{1}{2}\trace(\mathcal{T}_\varepsilon(t,x))=\frac{1}{2}\int_{\RR^N}v\vert v\vert^2 f_\varepsilon(t,x,v)\,dv.$$
Note that (\ref{VPFPCS-dimensionless-hyperbolicscaling.form}) can be restated as
\begin{equation}\label{VPFPCS-dimensionless-hyperbolicscaling.restated.form}
\varepsilon\frac{\partial f_\varepsilon}{\partial t}+\varepsilon v\cdot\nabla_x f_\varepsilon-\nabla_x\psi_\varepsilon\cdot \nabla_v f_\varepsilon=\divop_v\left(f_\varepsilon v+\varepsilon\nabla_v f_\varepsilon+f_\varepsilon \phi_\varepsilon*\rho_\varepsilon v-f_\varepsilon\phi_\varepsilon*j_\varepsilon\right)
\end{equation}
Here on we will assume that the initial data $f_\varepsilon^0$ satisfy the next hypothesis
\begin{equation}\tag{$H_1$}\label{hypothesis.initial.data.f.form}
\left\{
\begin{array}{l}
\displaystyle f_\varepsilon^0=f_\varepsilon^0(x,v)\geq 0\ \mbox{ and }\ f_\varepsilon^0\in C^\infty_c(\RR^N\times \RR^N),\\
\displaystyle \Vert f_\varepsilon^0\Vert_{L^1(\RR^{2N})}\leq M_0\ \mbox{ and } \rho_\varepsilon^0 \overset{*}{\rightharpoonup} \rho^0\mbox{ in }\mathcal{M}(\RR^N),\\
\displaystyle \Vert \vert v\vert^2 f_\varepsilon^0\Vert_{L^1(\RR^2N)}\leq E_0.
\end{array}
\right.
\end{equation}
for every $\varepsilon>0$, where $M_0,E_0>0$ are $\varepsilon$-independent constants. Regarding the external forces $-\nabla\psi_\varepsilon$, we will assume that 
\begin{equation}\tag{$H_2$}\label{hypothesis.initial.data.F.form}
\left\{
\begin{array}{l}
\displaystyle \psi_\varepsilon\in L^2(0,T;W^{1,\infty}(\RR^N))\mbox{ and }\psi\in L^2(0,T;W^{1,\infty}(\RR^N)),\\
\displaystyle \nabla \psi_\varepsilon(t,\cdot)\in C_0(\RR^N,\RR^N))\mbox{ and }\nabla\psi(t,\cdot)\in C_0(\RR^N,\RR^N),\ \mbox{a.e. }t\in [0,T),\\
\Vert \nabla\psi_\varepsilon\Vert_{L^2(0,T;L^\infty(\RR^N,\RR^N))}\leq F_0\ \mbox{ and }\ \nabla\psi_\varepsilon \rightarrow \nabla\psi\ \mbox{ in }L^1(0,T;C_0(\RR^N)),
\end{array}
\right.
\end{equation}
for every $\varepsilon>0$ and some $\varepsilon$-independent $F_0>0$. Under such assumptions (see \cite{B,CS,HaTadmor}) there exists a unique global in time strong solution to (\ref{VPFPCS-dimensionless-hyperbolicscaling.restated.form}) such that $f_\varepsilon(0,\cdot,\cdot)=f_\varepsilon^0$ for every $\varepsilon>0$. Respectively multiplying Equation (\ref{VPFPCS-dimensionless-hyperbolicscaling.restated.form}) by $1$, $v$ and $v\otimes v$ (Kronecker product) and integrating with respect to $v$ by parts one can obtain the next hierarchy of moment equations by virtue of the fall-off at infinity of the $\varepsilon$-approximate solutions $f_\varepsilon(t,x,v)$:
\subsubsection*{Mass conservation}
\begin{equation}\label{MomentumEquation-mass.form}
\frac{\partial \rho_\varepsilon}{\partial t}+\divop_x j_\varepsilon=0.
\end{equation}
\subsubsection*{Current balance}
\begin{equation}\label{MomentumEquation-current.form}
\varepsilon\frac{\partial j_\varepsilon}{\partial t}+\varepsilon\divop_x \mathcal{S}_\varepsilon+\rho_\varepsilon\nabla_x\psi_\varepsilon+\left(1+\phi_\varepsilon*\rho_\varepsilon\right)j_\varepsilon-\left(\phi_\varepsilon*j_\varepsilon\right)\rho_\varepsilon=0.
\end{equation}
\subsubsection*{Stress tensor balance}
\begin{equation}\label{MomentumEquation-energy.form}
\varepsilon\frac{\partial \mathcal{S_\varepsilon}}{\partial t}+\varepsilon\divop_x\mathcal{T}_\varepsilon+2\Sym(j_\varepsilon\otimes \nabla_x\psi_\varepsilon)+2\left(\left(1+\phi_\varepsilon*\rho_\varepsilon\right)\mathcal{S}_\varepsilon-\rho_\varepsilon I\right)-2\Sym((\phi_\varepsilon*j_\varepsilon)\otimes j_\varepsilon)=0.
\end{equation}
Here, $\Sym(M)$ stands for the symmetric part of a square matrix $M$, i.e., $\Sym(M):=\frac{1}{2}(M+M^\mathsf{T})$. Note that the spectral norm of the stress tensor can be entirely controlled by the average kinetic energy
$$\vert \mathcal{S}_\varepsilon(t,x)\vert\leq 2 E_\varepsilon(t,x),\ \ t\geq 0,\,x\in\RR^N.$$
Consequently, despite the stress tensor containing all the information about the average kinetic energy of the system of particles, one might be interested in a explicit balance law for the latter. It can be straightforwardly deduced from  (\ref{MomentumEquation-energy.form}) by taking traces
\begin{equation}\label{MomentumEquation-energy-average.form}
\varepsilon\frac{\partial E_\varepsilon}{\partial t}+\varepsilon\divop_x Q_\varepsilon+j_\varepsilon\cdot \nabla_x\psi_\varepsilon+2\left((1+\phi_\varepsilon*\rho_\varepsilon)E_\varepsilon-\frac{N}{2}\rho_\varepsilon\right)-(\phi_\varepsilon*j_\varepsilon)\cdot j_\varepsilon=0.
\end{equation}
As it often happens in most of the kinetic equations, the hierarchy of velocity moments is not a closed system. Nevertheless, as it will be shown later, our hyperbolic hydrodynamic limit will close the system (\ref{MomentumEquation-mass.form})--(\ref{MomentumEquation-current.form}) when $\varepsilon\rightarrow 0$ as stated in Equation (\ref{Euler-limit.intro}). 

\subsection{A priori bounds}
Let us show first some useful bounds for the local interaction kernel $\phi_\varepsilon$.

\begin{lem}\label{Phi_epsilon-estimates.lem}
Let us fix any nonnegative number $\varepsilon$ and any exponent $\lambda\in (0,1/2)$. Then, the next estimates hold for every $r>0$:
\begin{enumerate}
\item $\displaystyle\phi_\varepsilon(r)\leq \frac{1}{c_\lambda^\lambda r^{2\lambda}}$,
\item $\displaystyle\phi_\varepsilon(r)\leq \frac{1}{\varepsilon^{2\lambda}}$,
\item $\displaystyle\vert \phi_\varepsilon(r)-\phi_0(r)\vert\leq C_\lambda \frac{\varepsilon^{1-2\lambda}}{r}$,
\end{enumerate}
where $C_\lambda>0$ is some constant depending on $\lambda$ but not depending neither on $\varepsilon$ nor in $r$.
\end{lem}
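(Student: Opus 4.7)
Parts (1) and (2) are immediate consequences of the trivial inequalities $\varepsilon^{2}+c_{\lambda}r^{2}\geq c_{\lambda}r^{2}$ and $\varepsilon^{2}+c_{\lambda}r^{2}\geq \varepsilon^{2}$: raising both to the power $\lambda$ and inverting gives the two claims. So the only real content is (3), for which I would argue as follows.

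First, I would derive a sharp pointwise bound via the mean value theorem. Setting $g(s):=(s+c_{\lambda}r^{2})^{-\lambda}$, one has
\[
\phi_{\varepsilon}(r)-\phi_{0}(r)=g(\varepsilon^{2})-g(0)=-\lambda\,\xi^{-\lambda-1}\,\varepsilon^{2}
\]
for some $\xi\in[c_{\lambda}r^{2},\,\varepsilon^{2}+c_{\lambda}r^{2}]$. Since $-\lambda-1<0$, we bound $\xi^{-\lambda-1}\leq (c_{\lambda}r^{2})^{-\lambda-1}$, hence
\[
|\phi_{\varepsilon}(r)-\phi_{0}(r)|\leq \frac{\lambda}{c_{\lambda}^{\lambda+1}}\cdot\frac{\varepsilon^{2}}{r^{2\lambda+2}}.
\]
Combined with the cruder triangle bound $|\phi_{\varepsilon}-\phi_{0}|\leq \phi_{\varepsilon}+\phi_{0}$, together with parts (1) and (2), this gives me two complementary estimates to play with.

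Next I would split into the two regimes $r\geq\varepsilon$ and $r<\varepsilon$ and pick the better bound in each. If $r\geq\varepsilon$, the MVT estimate above gives
\[
|\phi_{\varepsilon}(r)-\phi_{0}(r)|\leq \frac{\lambda}{c_{\lambda}^{\lambda+1}}\cdot\frac{\varepsilon^{2}}{r\cdot r^{2\lambda+1}}\leq \frac{\lambda}{c_{\lambda}^{\lambda+1}}\cdot\frac{\varepsilon^{2}}{r\cdot \varepsilon^{2\lambda+1}}=\frac{\lambda}{c_{\lambda}^{\lambda+1}}\cdot\frac{\varepsilon^{1-2\lambda}}{r},
\]
using $r\geq\varepsilon$ to absorb $r^{2\lambda+1}$. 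If on the other hand $r<\varepsilon$, then by parts (1) and (2),
\[
|\phi_{\varepsilon}(r)-\phi_{0}(r)|\leq \frac{1}{\varepsilon^{2\lambda}}+\frac{1}{c_{\lambda}^{\lambda}r^{2\lambda}}\leq \Big(1+\tfrac{1}{c_{\lambda}^{\lambda}}\Big)\frac{1}{r^{2\lambda}}=\Big(1+\tfrac{1}{c_{\lambda}^{\lambda}}\Big)\frac{r^{1-2\lambda}}{r},
\]
and since $1-2\lambda>0$ and $r<\varepsilon$, we conclude $r^{1-2\lambda}\leq\varepsilon^{1-2\lambda}$, giving the desired bound. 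Taking $C_{\lambda}$ to be the larger of the two constants yields (3).

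I do not expect any real obstacle here: both estimates are elementary, and the only mild subtlety is choosing a crossover scale for the case split so as to convert the quadratic dependence on $\varepsilon$ in the near-zone into the required $\varepsilon^{1-2\lambda}$, which is exactly what $r=\varepsilon$ achieves because $\lambda<1/2$ makes the exponent $1-2\lambda$ positive. (An alternative, case-free route is to interpolate $|\phi_{\varepsilon}-\phi_{0}|^{(1-2\lambda)/2}\cdot|\phi_{\varepsilon}-\phi_{0}|^{(1+2\lambda)/2}$ against the MVT bound and the $2\phi_{0}$ bound respectively, which also reproduces the exponents $\varepsilon^{1-2\lambda}r^{-1}$, but the case split is the most transparent.)
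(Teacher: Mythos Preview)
Your proof is correct. Parts (1) and (2) match the paper's one-line argument. For part (3), however, your route differs from the paper's: the paper writes the difference as an integral in the parameter,
\[
\phi_\varepsilon(r)-\phi_0(r)=-\lambda\int_0^\varepsilon\frac{2\theta}{(\theta^2+c_\lambda r^2)^{\lambda+1}}\,d\theta,
\]
then uses the elementary Young-type inequality $2\theta\leq \frac{1}{\sqrt{c_\lambda}\,r}(\theta^2+c_\lambda r^2)$ to cancel one power in the denominator, and finally bounds $(\theta^2+c_\lambda r^2)^{-\lambda}\leq \theta^{-2\lambda}$ before integrating. This yields the estimate in one stroke, with no case split, and an explicit constant $C_\lambda=\frac{\lambda}{(1-2\lambda)\sqrt{c_\lambda}}$. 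Your approach instead applies the mean value theorem in $s=\varepsilon^2$ to get a bound $\sim \varepsilon^2/r^{2\lambda+2}$, which is sharper for large $r$ but too crude for small $r$, and then repairs the small-$r$ regime by the triangle inequality and a crossover at $r=\varepsilon$. Both arguments are elementary and of comparable length; the paper's integral trick is slightly slicker in that it avoids the dichotomy and delivers a single explicit constant, while your case split is perhaps more transparent about \emph{why} the exponent $1-2\lambda$ appears (it is exactly the exponent that balances the two regimes at the crossover scale).
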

\begin{proof}
The first two estimates follows from the definition of $\phi_\varepsilon$. Then, let us focus on the last property. The fundamental lemma of calculus leads to the next expression
$$\phi_\varepsilon(r)-\phi_0(r)=\int_0^\varepsilon\frac{d}{d\theta}\left[\frac{1}{(\theta^2+c_\lambda r^2)^\lambda}\right]\,d\theta=-\lambda\int_0^\varepsilon\frac{2\theta}{(\theta^2+c_\lambda r^2)^{\lambda+1}}\,d\theta.$$
Young's inequality for real numbers allows obtaining the next bound
$$2\theta=\frac{1}{\sqrt{c_\lambda}r}2\theta\sqrt{c_\lambda}r\leq \frac{1}{\sqrt{c_\lambda} r}(\theta^2+c_\lambda r^2).$$
Then, taking absolute values on the preceding identity we arrive at
\begin{align*}
\vert \phi_\varepsilon(r)-\phi_0(r)\vert&\leq \frac{\lambda}{\sqrt{c_\lambda}}\frac{1}{r}\int_0^\varepsilon\frac{1}{(\theta^2+c_\lambda r^2)^\lambda}\,d\theta\\
&\leq \frac{\lambda}{\sqrt{c_\lambda}}\frac{1}{r}\int_0^\varepsilon \theta^{-2\lambda}\,d\theta=\frac{\lambda}{(1-2\lambda)\sqrt{c_\lambda}}\frac{\varepsilon^{1-2\lambda}}{r}.
\end{align*}
\end{proof}
Let us know discuss some a priori bounds for the density function $\rho_\varepsilon$ and the current $j_\varepsilon$.
\begin{pro}\label{Momenta_equations.pro}
Let the initial distribution functions $f_\varepsilon^0$ verify (\ref{hypothesis.initial.data.f.form}) and the external forces $-\nabla\psi_\varepsilon$ fulfill (\ref{hypothesis.initial.data.F.form}). Consider the strong global in time solution $f_\varepsilon$ to (\ref{VPFPCS-dimensionless-hyperbolicscaling.restated.form}) with initial data $f_\varepsilon^0$ and fix any nonnegative integer $k$. Then, the $k$-th order moments in $x$ and $v$ of $f_\varepsilon$ obey the following equations,
\begin{align}
\varepsilon\frac{\partial}{\partial t}\int_{\RR^N}\int_{\RR^N}\vert v\vert^k f_\varepsilon\,dx\,dv=&-k\int_{\RR^N}\int_{\RR^N}\vert v\vert^{k-2}v\cdot\nabla_x\psi_\varepsilon f_\varepsilon\,dx\,dv\nonumber\\
&-k\int_{\RR^N}\int_{\RR^N}\vert v\vert^k f_\varepsilon\,dx\,dv+k(N+k-2)\int_{\RR^N}\int_{\RR^N}\vert v\vert^{k-2}f_\varepsilon\,dx\,dv\nonumber\\
&-k\int_{\RR^N}\int_{\RR^N}\vert v\vert^{k-1}\left((\phi_\varepsilon*\rho_\varepsilon) \vert v\vert-(\phi_\varepsilon*j_\varepsilon)\cdot\frac{v}{\vert v\vert}\right)f_\varepsilon\,dx\,dv\label{Momenta_equation_v.form}\\
\frac{\partial}{\partial t}\int_{\RR^N}\int_{\RR^N}\vert x\vert^k f_\varepsilon\,dx\,dv=&\hspace{0.35cm}k\int_{\RR^N}\int_{\RR^N}\vert x\vert^{k-2}x\cdot v f_\varepsilon\,dx\,dv\label{Momenta_equation_x.form}
\end{align}
Note that in particular the total mass remains constant, i.e.,
\begin{equation}\label{Totalmass_constant.form}
\frac{d}{dt}\int_{\RR^N} \rho_\varepsilon(t,x)\,dx=0.
\end{equation}
\end{pro}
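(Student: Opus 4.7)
My strategy follows the standard recipe for deriving velocity- and spatial-moment equations from a Vlasov--Fokker--Planck-type kinetic equation: I test \eqref{VPFPCS-dimensionless-hyperbolicscaling.restated.form} against a weight depending only on $v$ (for the $v$-moments) or only on $x$ (for the $x$-moments), integrate over $\RR^N\times\RR^N$, and transfer all derivatives onto the weight by integration by parts. The assumptions \eqref{hypothesis.initial.data.f.form}--\eqref{hypothesis.initial.data.F.form} together with the standard propagation theory for the strong solution $f_\varepsilon$ provide enough decay in both $x$ and $v$ (in particular, integrable second moments, for which the energy balance \eqref{MomentumEquation-energy-average.form} already furnishes an a priori control), so that every boundary term at infinity arising from integration by parts vanishes.

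For identity \eqref{Momenta_equation_v.form}, multiply the equation by $|v|^k$ and integrate. The convective term $\varepsilon\int\!\int |v|^k v\cdot\nabla_x f_\varepsilon$ is a pure $x$-divergence and drops out, while one integration by parts in $v$ in the forcing term, via $\nabla_v|v|^k = k|v|^{k-2}v$, produces $k\int\!\int |v|^{k-2}v\cdot\nabla_x\psi_\varepsilon f_\varepsilon$. On the right-hand side, one integration by parts converts the outer $v$-divergence into a scalar product with $\nabla_v|v|^k$, splitting into four pieces: from the friction flux $f_\varepsilon v$ comes $-k\int\!\int |v|^k f_\varepsilon$; from the diffusion flux $\nabla_v f_\varepsilon$, a second integration by parts together with the identity $\nabla_v\cdot(|v|^{k-2}v)=(N+k-2)|v|^{k-2}$ yields $k(N+k-2)\int\!\int |v|^{k-2} f_\varepsilon$; and the two alignment fluxes combine into $-k\int\!\int |v|^{k-1}\bigl((\phi_\varepsilon*\rho_\varepsilon)|v|-(\phi_\varepsilon*j_\varepsilon)\cdot v/|v|\bigr)f_\varepsilon$. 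Rearranging the $\nabla_x\psi_\varepsilon$ contribution to the right-hand side gives \eqref{Momenta_equation_v.form}.

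Equation \eqref{Momenta_equation_x.form} is simpler, since $|x|^k$ does not depend on $v$: after testing and integrating by parts in $v$, the forcing term and the entire right-hand side of the kinetic equation drop out, leaving only $\varepsilon\int\!\int |x|^k v\cdot\nabla_x f_\varepsilon$, which one integration by parts in $x$ using $\nabla_x|x|^k=k|x|^{k-2}x$ converts into $-\varepsilon k\int\!\int |x|^{k-2}x\cdot v f_\varepsilon$; rearranging and cancelling the prefactor $\varepsilon$ yields the stated formula. The mass conservation \eqref{Totalmass_constant.form} then arises either as the $k=0$ specialization of \eqref{Momenta_equation_v.form} (in which every coefficient in front of a potentially singular integrand vanishes identically) or by directly integrating the continuity equation \eqref{MomentumEquation-mass.form} in $x$.

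The only technical point worth flagging is that $|v|^k$ fails to be $C^2$ at the origin when $k$ is not an even integer, so the manipulations involving $\nabla_v|v|^k$ and $\nabla_v\cdot(|v|^{k-2}v)$ must be read distributionally. This I would handle in the standard way by regularising $|v|^k$ through $(|v|^2+\eta)^{k/2}$ and passing to the limit $\eta\downarrow 0$, using the local integrability of $|v|^{k-2}$ on $\RR^N$ (valid as soon as $k>2-N$, which covers the physically relevant moments $k\in\{1,2\}$ in dimension $N\ge 2$) together with the smoothness and $v$-decay of $f_\varepsilon$ to invoke dominated convergence. Beyond this essentially cosmetic point, the derivation is a purely algebraic bookkeeping of integrations by parts and offers no further obstacle.
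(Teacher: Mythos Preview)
Your proposal is correct and follows the same approach as the paper, which states the proposition without a detailed proof beyond the preceding remark that the moment equations are obtained by multiplying \eqref{VPFPCS-dimensionless-hyperbolicscaling.restated.form} by the relevant weight and integrating by parts in $v$ (and $x$), using the decay of $f_\varepsilon$ at infinity. Your extra care about the lack of $C^2$ regularity of $|v|^k$ at the origin for odd $k$, handled via the regularisation $(|v|^2+\eta)^{k/2}$, is a welcome technical precision that the paper leaves implicit.
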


\begin{cor}\label{Moments_bounds.cor}
Under the hypothesis in Proposition \ref{Momenta_equations.pro},
\begin{multline*}
k\left\Vert \vert v\vert^k f_\varepsilon\right\Vert_{L^1(0,T;L^1(\RR^{2N}))}\\
+\frac{k}{2}\int_0^T\int_{\RR^{4N}}(\vert v\vert^{k-2}v-\vert w\vert^{k-2}w)\cdot (v-w)\phi_\varepsilon(\vert x-y\vert)f_\varepsilon(t,x,v)f_\varepsilon(t,y,w)\,dx\,dt\,dv\,dw\,dt\\
\leq \varepsilon\Vert\vert v\vert^k f_\varepsilon(0)\Vert_{L^1(\RR^{2N})}+k(N+k-2)\Vert \vert v\vert^{k-2}f_\varepsilon\Vert_{L^1(0,T;L^1(\RR^{2N}))}+k\left\Vert \vert v\vert^{k-1}f_\varepsilon \nabla_x\psi_\varepsilon\right\Vert_{L^1(0,T;L^1(\RR^{2N}))}
\end{multline*}
\end{cor}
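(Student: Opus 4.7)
My starting point is the moment identity \eqref{Momenta_equation_v.form}. Integrating that equation over $t\in[0,T]$ produces on the left hand side the boundary contribution $\varepsilon\int_{\RR^{2N}}\vert v\vert^k f_\varepsilon(T,x,v)\,dx\,dv-\varepsilon\Vert\vert v\vert^k f_\varepsilon(0)\Vert_{L^1(\RR^{2N})}$. Since $f_\varepsilon\geq 0$, the terminal-time moment is nonnegative, so moving it to the other side and discarding it only sharpens the estimate. After rearranging, the two dissipative contributions
\[
k\int_0^T\!\!\int_{\RR^{2N}}\vert v\vert^k f_\varepsilon\,dx\,dv\,dt\quad\text{and}\quad k\int_0^T\!\!\int_{\RR^{2N}}\vert v\vert^{k-1}\Bigl((\phi_\varepsilon*\rho_\varepsilon)\vert v\vert-(\phi_\varepsilon*j_\varepsilon)\cdot\tfrac{v}{\vert v\vert}\Bigr)f_\varepsilon\,dx\,dv\,dt
\]
are grouped on the left hand side, while the external force piece and the $(N+k-2)$ lower-order piece remain on the right.

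The external-force term is controlled by the pointwise bound $-\vert v\vert^{k-2}v\cdot\nabla_x\psi_\varepsilon\leq \vert v\vert^{k-1}\vert\nabla_x\psi_\varepsilon\vert$, which converts it into the norm $k\Vert\vert v\vert^{k-1}f_\varepsilon\nabla_x\psi_\varepsilon\Vert_{L^1(0,T;L^1(\RR^{2N}))}$ claimed in the statement. The drift remainder $k(N+k-2)\int_0^T\int_{\RR^{2N}}\vert v\vert^{k-2}f_\varepsilon\,dx\,dv\,dt$ is already in the desired form, and the initial-data term yields $\varepsilon\Vert\vert v\vert^k f_\varepsilon(0)\Vert_{L^1(\RR^{2N})}$.

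The only genuine manipulation is the symmetrization of the alignment dissipation. Unfolding the convolutions via $(\phi_\varepsilon*\rho_\varepsilon)(t,x)=\int_{\RR^{2N}}\phi_\varepsilon(\vert x-y\vert)f_\varepsilon(t,y,w)\,dy\,dw$, and similarly for $\phi_\varepsilon*j_\varepsilon$, together with the identity $\vert v\vert^{k-1}(v/\vert v\vert)=\vert v\vert^{k-2}v$, the alignment term becomes
\[
k\int_0^T\!\!\int_{\RR^{4N}}\vert v\vert^{k-2}v\cdot(v-w)\,\phi_\varepsilon(\vert x-y\vert)\,f_\varepsilon(t,x,v)f_\varepsilon(t,y,w)\,dx\,dy\,dv\,dw\,dt.
\]
Exploiting the invariance of $\phi_\varepsilon(\vert x-y\vert)f_\varepsilon(t,x,v)f_\varepsilon(t,y,w)$ under the swap $(x,v)\leftrightarrow(y,w)$, I would average the above integral with its swapped copy to collapse it into the symmetric quadratic form
\[
\tfrac{k}{2}\int_0^T\!\!\int_{\RR^{4N}}(\vert v\vert^{k-2}v-\vert w\vert^{k-2}w)\cdot(v-w)\,\phi_\varepsilon(\vert x-y\vert)\,f_\varepsilon(t,x,v)f_\varepsilon(t,y,w)\,dx\,dy\,dv\,dw\,dt,
\]
which matches the second summand in the corollary.

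I do not foresee any serious obstacle: the only conceptual point worth emphasizing is that the symmetrized integrand is pointwise nonnegative, since $v\mapsto\vert v\vert^{k-2}v$ is monotone as the gradient of the convex function $\vert v\vert^k/k$. Hence the grouping on the left hand side is a genuine dissipation, and the inequality follows from the integrated identity by discarding the nonnegative boundary term at $t=T$ and estimating the force contribution in absolute value.
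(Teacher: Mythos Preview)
Your proposal is correct and follows essentially the same approach as the paper, which simply says to integrate \eqref{Momenta_equation_v.form} in time and discard the nonnegative boundary contribution at $t=T$; you have merely filled in the symmetrization of the alignment term and the absolute-value bound on the force term that the paper leaves implicit. Your closing remark on the pointwise nonnegativity of $(\vert v\vert^{k-2}v-\vert w\vert^{k-2}w)\cdot(v-w)$ is not needed for the corollary itself---it is the content of the subsequent Remark~\ref{Moments_bounds.rem}---but it is correct and harmless to include.
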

\begin{proof}
Integrate Equation (\ref{Momenta_equation_v.form}) with respect to time and neglect the term arising from the Barrow's law associated with the endpoint $t=T$ of the time interval.
\end{proof}

\begin{rem}\label{Moments_bounds.rem}
Note that the estimate in Corollary \ref{Moments_bounds.cor} leads to a real estimate of the $k$-the order velocity moment as long as the second term in the LHS is nonnegative. In turns, such term can be lower bounded through simple computations involving the Cauchy--Schwartz inequality as follows
\begin{multline*}
k\int_0^T\int_{\RR^{4N}}(\vert v\vert^{k-1}-\vert w\vert^{k-1})(\vert v\vert-\vert w\vert)\phi_\varepsilon(\vert x-y\vert)f_\varepsilon(t,x,v)f_\varepsilon(t,y,w)\,dx\,dy\,dv\,dw\\
\leq \frac{k}{2}\int_0^T\int_{\RR^{4N}}(\vert v\vert^{k-2}v-\vert w\vert^{k-2}w)\cdot (v-w)f_\varepsilon(t,x,v)f_\varepsilon(t,y,w)\,dx\,dy\,dv\,dw\,dt.
\end{multline*}
Note that when $k\geq 1$, then
$$(\vert v\vert^{k-1}-\vert w\vert^{k-1})(\vert v\vert-\vert w\vert)\geq 0\hspace{0.5cm}v,w\in\RR^N.$$
Consequently, for each $k\geq 1$ the $k$-th order moment can be bounded as follows
\begin{multline}\label{Momenta_equation_v.form2}
k\left\Vert \vert v\vert^k f_\varepsilon\right\Vert_{L^1(0,T;L^1(\RR^{2N}))}\\
\leq \varepsilon\Vert\vert v\vert^k f_\varepsilon(0)\Vert_{L^1(\RR^{2N})}+k(N+k-2)\Vert \vert v\vert^{k-2}f_\varepsilon\Vert_{L^1(0,T;L^1(\RR^{2N}))}+k\left\Vert \vert v\vert^{k-1}f_\varepsilon \nabla_x\psi_\varepsilon\right\Vert_{L^1(0,T;L^1(\RR^{2N}))}
\end{multline}
and the next term can be bounded similarly
\begin{multline}\label{Momenta_equation_v.form3}
\frac{k}{2}\int_0^T\int_{\RR^{4N}}(\vert v\vert^{k-2}v-\vert w\vert^{k-2}w)\cdot (v-w)\phi_\varepsilon(\vert x-y\vert)f_\varepsilon(t,x,v)f_\varepsilon(t,y,w)\,dx\,dt\,dv\,dw\,dt\\
\leq \varepsilon\Vert\vert v\vert^k f_\varepsilon(0)\Vert_{L^1(\RR^{2N})}+k(N+k-2)\Vert \vert v\vert^{k-2}f_\varepsilon\Vert_{L^1(0,T;L^1(\RR^{2N}))}+k\left\Vert \vert v\vert^{k-1}f_\varepsilon \nabla_x\psi_\varepsilon\right\Vert_{L^1(0,T;L^1(\RR^{2N}))}
\end{multline}
\end{rem}

In particular, the case $k=2$ yields the next bounds of the average kinetic energy, the current of particles and the \textit{dissipation of kinetic energy due to alignment interaction between particles}.
\begin{cor}\label{Moments_bounds_k=2.cor}
Under the hypothesis in Proposition \ref{Momenta_equations.pro},
\begin{align}
\left\Vert \vert v\vert f_\varepsilon\right\Vert_{L^2(0,T;L^1(\RR^{2N}))}&\leq M_0^{1/2}\left(\left\Vert \vert v\vert^2 f_\varepsilon\right\Vert_{L^1(0,T;L^1(\RR^{2N}))}\right)^{1/2},\label{Moment_v_firstorder.form}\\
\left\Vert \vert v\vert^2 f_\varepsilon\right\Vert_{L^1(0,T;L^1(\RR^{2N}))}&\leq 2\varepsilon E_0+\left(2NT+F_0^2\right)M_0.\label{Moment_v_secondorder.form}
\end{align}
In addition, the next estimate holds
\begin{equation}\label{Moment_v_cuadratic.form}
\int_0^T\int_{\RR^{4N}}\phi_\varepsilon(\vert x-y\vert)\vert v-w\vert^2f_\varepsilon(t,x,v)f_\varepsilon(t,y,w)\,dx\,dy\,dv\,dw\,dt\leq 2\varepsilon E_0+\left(2NT+F_0^2\right)M_0.
\end{equation} 
\end{cor}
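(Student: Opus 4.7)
The plan is to derive all three estimates from Corollary \ref{Moments_bounds.cor} specialized to $k=2$, together with mass conservation \eqref{Totalmass_constant.form} and elementary Cauchy--Schwarz and Young inequalities. Throughout, write $X := \Vert |v|^2 f_\varepsilon\Vert_{L^1(0,T;L^1(\RR^{2N}))}^{1/2}$ and $A$ for the alignment dissipation on the left-hand side of \eqref{Moment_v_cuadratic.form}.

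First, inequality \eqref{Moment_v_firstorder.form} is a direct Cauchy--Schwarz: factoring $|v| f_\varepsilon = |v| f_\varepsilon^{1/2} \cdot f_\varepsilon^{1/2}$ and using \eqref{Totalmass_constant.form} gives, pointwise in $t$,
$$\Vert |v| f_\varepsilon(t)\Vert_{L^1(\RR^{2N})}^2 \leq \Vert f_\varepsilon(t)\Vert_{L^1(\RR^{2N})} \int_{\RR^{2N}} |v|^2 f_\varepsilon(t)\,dx\,dv = M_0 \int_{\RR^{2N}} |v|^2 f_\varepsilon(t)\,dx\,dv,$$
and integration in $t$ yields \eqref{Moment_v_firstorder.form}.

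Next, for \eqref{Moment_v_secondorder.form} and \eqref{Moment_v_cuadratic.form} simultaneously, I apply Corollary \ref{Moments_bounds.cor} at $k=2$: the dissipation coefficient $\tfrac{k}{2}(|v|^{k-2}v-|w|^{k-2}w)\cdot(v-w)$ collapses to $|v-w|^2$, the zero-order moment term reduces to $2N T M_0$ by mass conservation, and the initial-data term is bounded by $\varepsilon E_0$ via \eqref{hypothesis.initial.data.f.form}. To control the forcing term $\Vert |v| f_\varepsilon \nabla_x\psi_\varepsilon\Vert_{L^1(0,T;L^1(\RR^{2N}))}$, I estimate $|\nabla_x\psi_\varepsilon(t,x)| \leq \Vert \nabla_x\psi_\varepsilon(t)\Vert_\infty$, apply Cauchy--Schwarz in $t$, invoke \eqref{hypothesis.initial.data.F.form} and the just-proved \eqref{Moment_v_firstorder.form}:
$$\Vert |v| f_\varepsilon \nabla_x\psi_\varepsilon\Vert_{L^1(0,T;L^1)} \leq \Vert \nabla_x\psi_\varepsilon\Vert_{L^2(0,T;L^\infty)} \Vert |v| f_\varepsilon\Vert_{L^2(0,T;L^1)} \leq F_0 M_0^{1/2} X.$$
Plugging these into Corollary \ref{Moments_bounds.cor} produces the closed inequality
$$2 X^2 + A \leq \varepsilon E_0 + 2 N T M_0 + 2 F_0 M_0^{1/2} X.$$

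The main obstacle, and the only nontrivial step, is to close this estimate: the right-hand side is nonlinear in $X$ and must be absorbed into the left-hand side without losing the quadratic control needed for both \eqref{Moment_v_secondorder.form} and \eqref{Moment_v_cuadratic.form}. I apply Young's inequality in the calibrated form $2 F_0 M_0^{1/2} X \leq X^2 + F_0^2 M_0$, which transfers exactly one copy of $X^2$ to the right, leaving
$$X^2 + A \leq \varepsilon E_0 + (2 N T + F_0^2) M_0.$$
Since $A \geq 0$ and $X^2 \geq 0$, both \eqref{Moment_v_secondorder.form} and \eqref{Moment_v_cuadratic.form} follow at once (indeed with the sharper constant $\varepsilon E_0$ in place of $2\varepsilon E_0$, which a fortiori implies the stated bounds).
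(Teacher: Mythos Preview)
Your proof is correct and follows essentially the same route as the paper: apply Corollary~\ref{Moments_bounds.cor} at $k=2$, use Cauchy--Schwarz for \eqref{Moment_v_firstorder.form}, bound the forcing term via H\"older in time together with \eqref{Moment_v_firstorder.form}, and close with Young's inequality $2F_0 M_0^{1/2} X \le X^2 + F_0^2 M_0$ to absorb one copy of $X^2$. Your remark about the sharper constant $\varepsilon E_0$ is also what the paper's argument actually produces; the factor $2$ in the stated bound is harmless slack.
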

\begin{proof}
By Corollary \ref{Moments_bounds.cor}, the next bound follows for $k=2$
\begin{align*}
2\Vert \vert v\vert^2 &f_\varepsilon\Vert_{L^1(0,T;L^1(\RR^{2N}))}+\int_0^T\int_{\RR^{4N}}\vert v-w\vert^2\phi_\varepsilon(\vert x-y\vert)f_\varepsilon(t,x,v)f_\varepsilon(t,y,w)\,dx\,dt\,dv\,dw\,dt\\
&\leq \varepsilon\Vert\vert v\vert^2 f_\varepsilon(0)\Vert_{L^1(\RR^{2N})}+2NT\Vert f_\varepsilon(0)\Vert_{L^1(\RR^{2N})}+2\left\Vert \vert v\vert f_\varepsilon \nabla_x\psi_\varepsilon\right\Vert_{L^1(0,T;L^1(\RR^{2N}))}\\
&\leq \varepsilon\Vert \vert v\vert^2 f_\varepsilon(0)\Vert_{L^1(\RR^{2N})}+2NT\Vert f_\varepsilon(0)\Vert_{L^1(\RR^{2N})}+2\Vert \vert v\vert f_\varepsilon\Vert_{L^2(0,T;L^1(\RR^{2N}))}\Vert \nabla\psi_\varepsilon\Vert_{L^2(0,T;L^\infty(\RR^{2N}))}
\end{align*}
The Cauchy--Schwartz inequality leads to
$$
\left\Vert \vert v\vert f_\varepsilon\right\Vert_{L^2(0,T;L^1(\RR^{2N}))}\leq \left(\Vert f_\varepsilon(0)\Vert_{L^1(\RR^{2N})}\right)^{1/2}\left(\left\Vert \vert v\vert^2 f_\varepsilon\right\Vert_{L^1(0,T;L^1(\RR^{2N}))}\right)^{1/2},
$$
and Young's inequality for real numbers yields
\begin{multline*}
2\left\Vert \vert v\vert f_\varepsilon\right\Vert_{L^2(0,T;L^1(\RR^{2N}))}\Vert \nabla_x\psi_\varepsilon\Vert_{L^2(0,T;L^\infty(\RR^N))}\\
\leq \Vert \nabla_x\psi_\varepsilon\Vert_{L^2(0,T;L^\infty(\RR^{2N}))}^2\Vert f_\varepsilon(0)\Vert_{L^1(\RR^{2N})}+\left\Vert\vert v\vert^2 f_\varepsilon\right\Vert_{L^1(0,T;L^1(\RR^{2N}))},
\end{multline*}
and this ends the proof.
\end{proof}

\subsection{Passing to the limit}
In this subsection, we will concentrate on the limiting procedure underlying the proof of Theorem \ref{Convergence.aproximate.solutions-intro.teo}. For the sake of completeness, let us specify the content and technical hypothesis that we will need in the proof.
\begin{theo}\label{Convergence.aproximate.solutions.teo}
Let $f_\varepsilon^0$ and $\nabla\psi_\varepsilon$ satisfy hypothesis (\ref{hypothesis.initial.data.f.form})-(\ref{hypothesis.initial.data.F.form}) and consider a sequence $f_\varepsilon$ of smooth solutions to (\ref{VPFPCS-dimensionless-hyperbolicscaling.Intro.form}) with $\lambda\in (0,1/2)$. Then, the macroscopic quantities $\rho_\varepsilon$ and $j_\varepsilon$ satisfy
\begin{align*}
\rho_\varepsilon&\rightarrow\rho, \hspace{0.25cm} \mbox{in }C([0,T],\mathcal{M}(\RR^N)-weak\,*),\\
j_\varepsilon &\overset{*}{\rightharpoonup} j, \hspace{0.25cm} \mbox{in }L^2(0,T;\mathcal{M}(\RR^N))^N.
\end{align*}
when $\varepsilon\rightarrow 0$, for some finite Radon measure $\rho$ and $j$ and some subsequence of $\{\rho_\varepsilon\}_{\varepsilon>0}$ and $\{j_\varepsilon\}_{\varepsilon>0}$ that we denote in the same way. In addition $(\rho,j)$ is a weak measure-valued solution to the Cauchy problem associated with the following Euler-type system
\begin{equation}\label{Euler-limit-j}
\left\{
\begin{array}{ll}
\displaystyle \partial_t\rho +\divop j=0, & x\in\RR^N,\,t\in [0,T),\\
\displaystyle \rho\,\nabla\psi+j=(\phi_0*j)\rho-(\phi_0*\rho)j, & x\in\RR^N,\, t\in [0,T)\\
\displaystyle \rho(0,\cdot)=\rho^0, & x\in \RR^N.
\end{array}
\right.
\end{equation}
\end{theo}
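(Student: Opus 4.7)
The plan is to combine the a priori estimates of Corollaries~\ref{Moments_bounds.cor} and~\ref{Moments_bounds_k=2.cor} with a Schochet-type symmetrization of the nonlocal commutator. The main obstruction will be giving a rigorous sense to, and then passing to the limit in, the nonlinear term $(\phi_\varepsilon\ast j_\varepsilon)\rho_\varepsilon-(\phi_\varepsilon\ast\rho_\varepsilon)j_\varepsilon$, where both $\rho_\varepsilon$ and $j_\varepsilon$ are only weakly-$\ast$ compact as measures and $\phi_\varepsilon$ degenerates into the singular Riesz kernel $\phi_0$.

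\emph{Step 1 (compactness).} Equation~(\ref{Totalmass_constant.form}) gives $\rho_\varepsilon$ bounded in $L^\infty(0,T;\mathcal{M}(\RR^N))$, while a pointwise Cauchy--Schwarz $|j_\varepsilon(t,x)|^2\le\rho_\varepsilon(t,x)\int_{\RR^N}|v|^2f_\varepsilon(t,x,v)\,dv$ combined with the energy bound~(\ref{Moment_v_secondorder.form}) yields $j_\varepsilon$ bounded in $L^2(0,T;\mathcal{M}(\RR^N))^N$; compare~(\ref{Moment_v_firstorder.form}). A diagonal extraction produces weak-$\ast$ limits $\rho$ and $j$. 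Plugging the bound on $j_\varepsilon$ into~(\ref{MomentumEquation-mass.form}) shows that $\partial_t\rho_\varepsilon$ is uniformly bounded in $L^2(0,T;W^{-1,1}_{\mathrm{loc}}(\RR^N))$, so a standard Arzel\`a--Ascoli-type argument upgrades the convergence to $\rho_\varepsilon\to\rho$ in $C([0,T];\mathcal{M}(\RR^N)\text{-weak-}\ast)$, which also delivers the initial condition $\rho(0,\cdot)=\rho^0$ from~(\ref{hypothesis.initial.data.f.form}).

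\emph{Step 2 (easy terms of the current balance).} Rearrange~(\ref{MomentumEquation-current.form}) as
\begin{equation*}
\rho_\varepsilon\nabla\psi_\varepsilon+j_\varepsilon=-\varepsilon\,\partial_t j_\varepsilon-\varepsilon\,\divop\mathcal{S}_\varepsilon+\bigl[(\phi_\varepsilon\ast j_\varepsilon)\rho_\varepsilon-(\phi_\varepsilon\ast\rho_\varepsilon)j_\varepsilon\bigr].
\end{equation*}
Since $|\mathcal{S}_\varepsilon|\le 2E_\varepsilon$ is bounded in $L^1$ and $j_\varepsilon$ in $L^2(0,T;\mathcal{M})$, the two $\varepsilon$-prefactored terms vanish in the sense of distributions on $[0,T)\times\RR^N$. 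The strong convergence $\nabla\psi_\varepsilon\to\nabla\psi$ in $L^1(0,T;C_0)$ postulated by~(\ref{hypothesis.initial.data.F.form}), combined with the $C([0,T];\mathcal{M}\text{-weak-}\ast)$ convergence of $\rho_\varepsilon$, yields $\rho_\varepsilon\nabla\psi_\varepsilon\to\rho\nabla\psi$ in the sense of measures, while weak-$\ast$ convergence directly handles the linear $j_\varepsilon$ term.

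\emph{Step 3 (commutator -- the main obstacle).} Using the kinetic identity
\begin{equation*}
\rho_\varepsilon(t,x)j_\varepsilon(t,y)-\rho_\varepsilon(t,y)j_\varepsilon(t,x)=\int_{\RR^N}\!\int_{\RR^N}(v-w)f_\varepsilon(t,x,v)f_\varepsilon(t,y,w)\,dv\,dw
\end{equation*}
and antisymmetry in $(x,y)$, I rewrite, for any $\varphi\in C_c^\infty([0,T)\times\RR^N;\RR^N)$,
\begin{equation*}
\langle\varphi,(\phi_\varepsilon\ast j_\varepsilon)\rho_\varepsilon-(\phi_\varepsilon\ast\rho_\varepsilon)j_\varepsilon\rangle=\tfrac{1}{2}\!\int_0^T\!\!\!\int_{\RR^{4N}}\!\phi_\varepsilon(|x-y|)(v-w)\!\cdot\![\varphi(t,x)-\varphi(t,y)]\,f_\varepsilon f_\varepsilon\,dx\,dy\,dv\,dw\,dt.
\end{equation*}
Introducing a smooth cutoff $\chi_\delta(|x-y|)$ vanishing on $\{|x-y|\le\delta/2\}$, the near-diagonal contribution is controlled by a Cauchy--Schwarz against the measure $f_\varepsilon f_\varepsilon\,dx\,dy\,dv\,dw\,dt$, combining the dissipation bound~(\ref{Moment_v_cuadratic.form}) with the pointwise estimate $\phi_\varepsilon(r)|\varphi(t,x)-\varphi(t,y)|^2\le C_\varphi\, r^{2-2\lambda}$ provided by Lemma~\ref{Phi_epsilon-estimates.lem}(1) and the Lipschitz regularity of $\varphi$; this yields a remainder of order $\delta^{1-\lambda}$, uniformly in $\varepsilon$. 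For the far-diagonal piece, Lemma~\ref{Phi_epsilon-estimates.lem}(3) gives $\phi_\varepsilon\to\phi_0$ uniformly on $\{|x-y|\ge\delta/2\}$ at the rate $\varepsilon^{1-2\lambda}$ -- which is where the assumption $\lambda\in(0,1/2)$ enters essentially -- and the product measure $\rho_\varepsilon\otimes j_\varepsilon$ converges weakly-$\ast$ to $\rho\otimes j$ in $L^2(0,T;\mathcal{M}(\RR^{2N}))$ thanks to the uniform-in-time strong convergence of $\rho_\varepsilon$ multiplying the weak-$L^2$-in-time convergence of $j_\varepsilon$. Passing first $\varepsilon\to 0$ and then $\delta\to 0$ identifies the limit with the symmetrized pairing against $\rho\otimes j-j\otimes\rho$, which is precisely the weak form of $(\phi_0\ast j)\rho-(\phi_0\ast\rho)j$, completing the derivation of~(\ref{Euler-limit-j}).
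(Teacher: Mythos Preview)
Your proof is correct, and Steps~1 and~2 match the paper's argument essentially line by line (the paper's Theorem~\ref{Mass_convergence_strong_weak_star.theo} and Corollary~\ref{Joint_convergence.cor} are precisely your Arzel\`a--Ascoli upgrade and tensor-product convergence). There is a harmless sign slip in your kinetic identity: $\int\!\int (v-w)f_\varepsilon(x,v)f_\varepsilon(y,w)\,dv\,dw = j_\varepsilon(x)\rho_\varepsilon(y)-\rho_\varepsilon(x)j_\varepsilon(y)$, the negative of what you wrote.

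Where you genuinely diverge from the paper is in Step~3. For $\lambda\in(0,1/2)$ the paper does \emph{not} introduce a diagonal cutoff. Instead, Lemma~\ref{H_estimates.lem}(3) shows that the full symmetrized kernel $H_\varphi^{\lambda,\varepsilon}$ converges to $H_\varphi^{\lambda,0}$ \emph{uniformly} on all of $\RR^{2N}$ at the rate $\varepsilon^{1-2\lambda}$, and since $H_\varphi^{\lambda,0}\in C([0,T];C_0(\RR^{2N}))$ for this range of $\lambda$, one simply pairs the strong convergence of kernels with the weak-$\ast$ convergence $\rho_\varepsilon\otimes j_\varepsilon-j_\varepsilon\otimes\rho_\varepsilon\rightharpoonup\rho\otimes j-j\otimes\rho$ and is done in one line (Corollary~\ref{Commutator_forces_limits.cor}). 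Your near/far splitting with the dissipation bound~(\ref{Moment_v_cuadratic.form}) is therefore more machinery than the stated theorem requires. On the other hand, your route is more robust: it is essentially the mechanism the paper deploys only at the endpoint $\lambda=1/2$ (via the non-concentration Lemma~\ref{non-concentration.lem} and Proposition~\ref{limitetestdiscontinua.pro}), and your Cauchy--Schwarz estimate giving an $O(\delta^{1-\lambda})$ near-diagonal remainder uniformly in $\varepsilon$ is a clean substitute for the paper's separate non-concentration argument. In short, the paper exploits the sub-critical regularity of $H_\varphi^{\lambda,0}$ to avoid any diagonal analysis, while you trade that for a kinetic-level dissipation argument that would survive up to the critical exponent.
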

This subsection will be divided into three distinguished parts. The first step will collect the necessary compactness properties of the sequences $\{\rho_\varepsilon\}_{\varepsilon>0}$ and $\{j_\varepsilon\}_{\varepsilon>0}$ that can be inferred from the preceding part of the section, whilst the second step will try to give some insight into what does the dissipation of kinetic energy due to alignment interactions provide to our system. The last step will show how to pass to the limit in all the terms. It will be done by paying a special attention to the nonlinear term and on the admissible range of the parameter $\lambda$ where the singular limit succeeds.

\subsubsection*{First step: Compactness}
Let the initial distribution functions $f_\varepsilon^0$ verify (\ref{hypothesis.initial.data.f.form}), the external forces $-\nabla\psi_\varepsilon$ fulfil (\ref{hypothesis.initial.data.F.form}) and consider the strong global in time solution $f_\varepsilon$ to (\ref{VPFPCS-dimensionless-hyperbolicscaling.restated.form}) with initial data $f_\varepsilon^0$ and  $\lambda\in (0,N/2)$. Notice that by Proposition \ref{Momenta_equations.pro}, Corollary \ref{Moments_bounds_k=2.cor} and the Banach--Alaoglu theorem one has
\begin{align}
\rho_\varepsilon&\overset{*}{\rightharpoonup}\rho, \hspace{0.25cm} \mbox{in }L^\infty(0,T;\mathcal{M}(\RR^N)),\label{Mass_convergence_weak_star.form}\\
j_\varepsilon&\overset{*}{\rightharpoonup} j, \hspace{0.25cm} \mbox{in }L^2(0,T;\mathcal{M}(\RR^N))^N.\label{Current_convergence_weak_star.form}
\end{align}
when $\varepsilon\searrow 0$. Consequently, it is straightforward to pass to the limit in the continuity equation (\ref{MomentumEquation-mass.form}) and to obtain
$$\frac{\partial \rho}{\partial t}+\divop j=0,$$
in the distributional sense. Regarding the equation of balance of linear momentum (\ref{MomentumEquation-current.form})
$$
\varepsilon\frac{\partial j_\varepsilon}{\partial t}+\varepsilon\divop_x \mathcal{S}_\varepsilon+\rho_\varepsilon\nabla_x\psi_\varepsilon+\left(1+\phi_\varepsilon*\rho_\varepsilon\right)j_\varepsilon-\left(\phi_\varepsilon*j_\varepsilon\right)\rho_\varepsilon=0.
$$
Notice that the first two terms converge towards zero in the distributional sense by virtue of (\ref{Moment_v_firstorder.form})--(\ref{Moment_v_secondorder.form}). Furthermore, (\ref{Mass_convergence_weak_star.form})--(\ref{Current_convergence_weak_star.form}) entails the next convergence in the sense of distributions
\begin{equation}\label{Formal_limit_current.form}
\lim_{\varepsilon\rightarrow 0}j_\varepsilon=\lim_{\varepsilon\rightarrow 0}\left\{\left(\phi_\varepsilon*j_\varepsilon\right)\rho_\varepsilon-(\phi_\varepsilon*\rho_\varepsilon)j_\varepsilon-\rho_\varepsilon\nabla_x\psi_\varepsilon\right\}.
\end{equation}
However, we would like to identify some limit equation for $j$ through the equation (\ref{Formal_limit_current.form}). In weak form, we have the next equation
\begin{align*}
\int_0^T\int_{\RR^N}j\cdot\varphi\,dx\,dt=&\lim_{\varepsilon\rightarrow 0}\left\{\int_0^T\int_{\RR^N}\int_{\RR^N}\phi_\varepsilon(\vert x-y\vert)\left(\rho_\varepsilon(t,x)j_\varepsilon(t,y)-\rho_\varepsilon(t,y)j_\varepsilon(t,x)\right)\cdot \varphi(t,x)\,dx\,dy\,dt\right.\\
&\hspace{0.6cm}-\left.\int_0^T\int_{\RR^N}\rho_\varepsilon(t,x)\nabla_x\psi_\varepsilon(t,x)\cdot\varphi(t,x)\,dx\,dt,\right\}
\end{align*}
for any vector-valued test function $\varphi\in C^\infty_c([0,T)\times \RR^N,\RR^N)$. In order to show that such nonlinear term makes sense, let us use the kindness of the commutator therein along with the symmetries of the influence function. An easy change of variables that interchanges $x$ with $y$ then yields the next expression:
\begin{align}\label{Current_equation_weak.form}
\begin{split}
\int_0^T\int_{\RR^N}j\cdot\varphi\,dx\,dt&=\lim_{\varepsilon\rightarrow 0}\left\{\frac{1}{2}\int_0^T\int_{\RR^N}H_\varphi^{\lambda,\varepsilon}(t,x,y)\cdot (\rho_\varepsilon(t,x)j_\varepsilon(t,y)-\rho_\varepsilon(t,y)j_\varepsilon(t,x))\,dx\,dy\,dt\right.\\
&\hspace{1.2cm}-\left.\int_0^T\int_{\RR^N}\rho_\varepsilon(t,x)\nabla_x\psi_\varepsilon(t,x)\cdot\varphi(t,x)\,dx\,dt\right\},
\end{split}
\end{align}
where the integral kernel $H_\varphi^{\lambda,\varepsilon}$ takes the form
$$H_\varphi^{\lambda.\varepsilon}(t,x,y):=\phi_\varepsilon(\vert x-y\vert)(\varphi(t,x)-\varphi(t,y))=\frac{\varphi(t,x)-\varphi(t,y)}{(\varepsilon^2+c_\lambda \vert x-y\vert^2)^\lambda}.$$

Combining the mean value theorem along with Lemma \ref{Phi_epsilon-estimates.lem} we arrive at
\begin{lem}\label{H_estimates.lem}
Let us fix $\varepsilon>0$, $\lambda\in (0,1/2)$ and a test function $\varphi\in C^\infty_c([0,T)\times \RR^N)$. Then, the next estimates hold for every couple $(x,y)\in \RR^{2N}\setminus \Delta$ and $t\in [0,T)$:
\begin{enumerate}
\item $\displaystyle\left\vert H_\varphi^{\lambda,\varepsilon}(t,x,y)\right\vert\leq \frac{1}{c_\lambda^\lambda}\Vert \nabla_x\varphi\Vert_{C([0,T)\times \RR^N)}\vert x-y\vert^{1-2\lambda},$
\item $\displaystyle \vert H_\varphi^{\lambda,\varepsilon}(t,x,y)\vert\leq \frac{1}{\varepsilon^{2\lambda}}\Vert \nabla_x\varphi\Vert_{C([0,T)\times \RR^N)}\vert x-y\vert,$
\item $\displaystyle \vert H_\varphi^{\lambda,\varepsilon}(t,x,y)-H_\varphi^{\lambda,0}(t,x,y)\vert\leq \varepsilon^{1-2\lambda} C_\lambda\Vert \nabla_x\varphi\Vert_{C([0,T)\times \RR^N)},$
\end{enumerate}
where $\Delta$ stands for the diagonal of $\RR^{2N}$ i.e., $\Delta:=\{(x,y)\in\RR^{2N}:\,x=y\}$.
\end{lem}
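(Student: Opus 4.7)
The proof is a direct synthesis of two ingredients already available: the mean value theorem applied to the smooth test function $\varphi$, and the three pointwise estimates for $\phi_\varepsilon$ supplied by Lemma \ref{Phi_epsilon-estimates.lem}. Recall that $\varphi(t,\cdot)\in C^\infty_c(\RR^N)$ implies $|\varphi(t,x)-\varphi(t,y)|\leq \|\nabla_x\varphi\|_{C([0,T)\times\RR^N)}\,|x-y|$ uniformly in $t$.

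For estimate (1), I factor $H_\varphi^{\lambda,\varepsilon}(t,x,y)=\phi_\varepsilon(|x-y|)(\varphi(t,x)-\varphi(t,y))$ and combine the Lipschitz bound on $\varphi$ with part (1) of Lemma \ref{Phi_epsilon-estimates.lem}, namely $\phi_\varepsilon(r)\leq c_\lambda^{-\lambda}r^{-2\lambda}$, so that one factor of $|x-y|$ cancels against the denominator and leaves $|x-y|^{1-2\lambda}$. Estimate (2) follows identically, this time using part (2) of Lemma \ref{Phi_epsilon-estimates.lem}, i.e.\ the uniform bound $\phi_\varepsilon(r)\leq \varepsilon^{-2\lambda}$, which keeps the full factor $|x-y|$ from the mean value theorem.

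For estimate (3), I write
\begin{equation*}
H_\varphi^{\lambda,\varepsilon}(t,x,y)-H_\varphi^{\lambda,0}(t,x,y)=\bigl(\phi_\varepsilon(|x-y|)-\phi_0(|x-y|)\bigr)\bigl(\varphi(t,x)-\varphi(t,y)\bigr),
\end{equation*}
apply the mean value theorem to the second factor, and then invoke part (3) of Lemma \ref{Phi_epsilon-estimates.lem}, which gives $|\phi_\varepsilon(r)-\phi_0(r)|\leq C_\lambda\varepsilon^{1-2\lambda}/r$. The $|x-y|$ in the numerator from the mean value theorem cancels exactly the $1/|x-y|$ in the kernel bound, producing the stated $\varepsilon$-power without any remaining spatial factor.

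There is no real obstacle here, since all three inequalities are essentially bookkeeping: the delicate work (controlling $\phi_\varepsilon$ itself and its difference with $\phi_0$) has already been carried out in Lemma \ref{Phi_epsilon-estimates.lem}, and the smoothness of the compactly supported test function makes the Lipschitz step trivial. The only point worth emphasizing is that $\lambda\in(0,1/2)$ ensures $1-2\lambda>0$, so that the exponents appearing in (1) and (3) are positive and these bounds will indeed be useful in the subsequent passage to the limit $\varepsilon\to 0$ (in particular, the $(x,y)$-integrable singularity $|x-y|^{1-2\lambda-N}$-type behavior off the diagonal will be controlled, and the $\varepsilon$-power in (3) vanishes).
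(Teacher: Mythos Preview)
Your proof is correct and follows exactly the approach indicated in the paper: the mean value theorem applied to $\varphi$ combined with the three estimates of Lemma~\ref{Phi_epsilon-estimates.lem}. There is nothing to add.
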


Let us first note some difference between the cases $\lambda\in (0,1/2)$ and $\lambda\geq 1/2$. In the former case, not only does $H_\varphi^{\lambda,\varepsilon}$ belong to $C([0,T],C_0(\RR^N,\RR^N))$ for every positive $\varepsilon$ but it also belongs to such space when $\varepsilon=0$. This is no longer true when $\lambda=1/2$ because despite the continuity be granted for positive $\varepsilon$, the corresponding kernel with $\varepsilon=0$ takes the form
$$H_\varphi^{\frac{1}{2},0}(t,x,y):=\frac{\varphi(t,x)-\varphi(t,y)}{\sqrt{3}\vert x-y\vert}.$$
and, although bounded, it clearly loses the continuity at the diagonal points $x=y$ regardless of the modulus of continuity of $\varphi(t,x)$. This is a common restriction in many other classical systems such as the 2D Euler equations in vorticity formulation (see e.g. \cite{Schochet}) or the 2D Vlasov--Poisson--Fokker--Planck system (see e.g. \cite{NietoPoupaudSoler-Hiperbolico}). Similarly, in the latter case $\lambda>1/2$ the kernel is discontinuous along the diagonal points and it is not necessarily bounded at $x=y$ since it can even blow up. Hence, one cannot expect to pass to the limit for $\lambda>1/2$, but at most for $\lambda\in (0,1/2]$.

In this part we will focus on $\lambda\in (0,1/2)$, where the continuity along the diagonal point is granted. In such case, Lemma \ref{H_estimates.lem} shows that
\begin{equation}\label{Kernel_uniform_convergence.form}
\lim_{\varepsilon\rightarrow 0} \Vert H_\varphi^{\lambda,\varepsilon}- H_\varphi^{\lambda,0}\Vert_{C([0,T],C_0(\RR^N,\RR^N))}=0.
\end{equation}
The next step will be to show that,
$$\rho_\varepsilon\otimes j_\varepsilon\overset{*}{\rightharpoonup} \rho\otimes j\hspace{0.25cm}\mbox{in }L^2(0,T;\mathcal{M}(\RR^{2N}))^N,$$
which does not directly follows just from (\ref{Mass_convergence_weak_star.form})-(\ref{Current_convergence_weak_star.form}). In general some sort of time equicontinuity in certain space is needed (see \cite{NietoPoupaudSoler-Hiperbolico,PoupaudSoler-Parabolico,Schochet}).

\begin{theo}\label{Mass_convergence_strong_weak_star.theo}
Let the initial distribution functions $f_\varepsilon^0$ verify (\ref{hypothesis.initial.data.f.form}), the external forces $-\nabla\psi_\varepsilon$ fulfill (\ref{hypothesis.initial.data.F.form}) and consider the strong global in time solution $f_\varepsilon$ to (\ref{VPFPCS-dimensionless-hyperbolicscaling.restated.form}) with initial data $f_\varepsilon^0$ and $\lambda\in (0,N/2)$. Then, the sequence $\rho_\varepsilon$ does not only converges weakly star in $L^\infty(0,T;\mathcal{M}(\RR^N))$ to $\rho$ but also in $C([0,T];\mathcal{M}(\RR^N)-weak\,*)$, i.e.,
$$\lim_{n\rightarrow 0}\sup_{t\in [0,T]}\left\vert\int_{\RR^N}\phi(x)d_x(\rho_\varepsilon(t)-\rho(t))\right\vert=0,$$
for every continuous test function $\phi\in C_0(\RR^N)$. In particular, $\rho(0,\cdot)=\rho^0$.
\end{theo}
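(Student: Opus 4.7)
The plan is to upgrade the weak-$*$ convergence (\ref{Mass_convergence_weak_star.form}) of $\rho_\varepsilon$ in $L^\infty(0,T;\mathcal{M}(\RR^N))$ to uniform-in-time weak-$*$ convergence via the standard route: prove that for each $\phi\in C_0(\RR^N)$ the scalar function $t\mapsto\langle\rho_\varepsilon(t),\phi\rangle$ is uniformly (in $\varepsilon$) bounded and equicontinuous on $[0,T]$, then extract by Arzelà--Ascoli together with a diagonalization over a countable dense subfamily of $C_0(\RR^N)$, and finally identify the limit with $\langle\rho(t),\phi\rangle$.

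First, I would exploit the continuity equation (\ref{MomentumEquation-mass.form}): testing against $\phi\in C^\infty_c(\RR^N)$ and integrating in $t$ gives
\begin{equation*}
\langle\rho_\varepsilon(t),\phi\rangle - \langle\rho_\varepsilon(s),\phi\rangle = \int_s^t \langle j_\varepsilon(\tau),\nabla\phi\rangle\,d\tau.
\end{equation*}
Since $\|j_\varepsilon(\tau)\|_{\mathcal{M}(\RR^N)^N}\leq \||v|f_\varepsilon(\tau)\|_{L^1(\RR^{2N})}$ and the right-hand side is bounded in $L^2(0,T)$ independently of $\varepsilon$ by (\ref{Moment_v_firstorder.form})--(\ref{Moment_v_secondorder.form}), the Cauchy--Schwarz inequality in time yields a uniform Hölder-$\tfrac{1}{2}$ bound
\begin{equation*}
\left|\langle\rho_\varepsilon(t)-\rho_\varepsilon(s),\phi\rangle\right| \leq C_0\,\|\nabla\phi\|_\infty\,|t-s|^{1/2},
\end{equation*}
with $C_0$ independent of $\varepsilon$. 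Mass conservation (\ref{Totalmass_constant.form}) in turn delivers $\|\rho_\varepsilon(t)\|_{\mathcal{M}(\RR^N)}\leq M_0$ for every $t$ and every $\varepsilon$, giving the required uniform boundedness.

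Next I would pick a countable family $\{\phi_k\}_{k\in\NN}\subset C^\infty_c(\RR^N)$ dense in $C_0(\RR^N)$. By Arzelà--Ascoli applied to each scalar map $t\mapsto\langle\rho_\varepsilon(t),\phi_k\rangle$ together with a diagonal extraction, one may refine the subsequence so that these maps converge uniformly on $[0,T]$ for every $k$. The weak-$*$ convergence (\ref{Mass_convergence_weak_star.form}) identifies the uniform limit with $\langle\rho(t),\phi_k\rangle$ for almost every $t$; the Hölder continuity in $t$ inherited by the limit uniquely pins down a continuous representative of $t\mapsto\rho(t)$ coinciding with it at every $t\in[0,T]$. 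Uniqueness of the limit then rules out the need to extract any further subsequence.

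To pass from $\{\phi_k\}$ to an arbitrary $\phi\in C_0(\RR^N)$, given $\eta>0$ I would choose $\phi_k$ with $\|\phi-\phi_k\|_\infty<\eta$ and estimate
\begin{equation*}
\sup_{t\in[0,T]}\left|\langle\rho_\varepsilon(t)-\rho(t),\phi\rangle\right| \leq \sup_{t\in[0,T]}\left|\langle\rho_\varepsilon(t)-\rho(t),\phi_k\rangle\right| + 2M_0\,\eta,
\end{equation*}
letting first $\varepsilon\to 0$ and then $\eta\to 0$. The identification $\rho(0,\cdot)=\rho^0$ then follows immediately from the uniform convergence at $t=0$ combined with the assumption $\rho_\varepsilon^0\overset{*}{\rightharpoonup}\rho^0$ contained in (\ref{hypothesis.initial.data.f.form}). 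There is no substantial obstacle here; the argument is independent of the value of $\lambda\in(0,N/2)$ since the singular interaction kernel plays no role, and the only delicate point is ensuring that the $L^2$-in-time bound on $j_\varepsilon$ is enough to produce the required uniform modulus of continuity, which it is via Cauchy--Schwarz.
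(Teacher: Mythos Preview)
Your proposal is correct and follows essentially the same route as the paper: both use the continuity equation together with the $L^2(0,T;L^1)$ bound on $j_\varepsilon$ to obtain a uniform $C^{0,1/2}$-in-time estimate for $t\mapsto\langle\rho_\varepsilon(t),\phi\rangle$ with $\phi\in C^\infty_c(\RR^N)$, then upgrade via Ascoli--Arzel\`a and density of $C^\infty_c$ (equivalently $W^{1,\infty}$) in $C_0(\RR^N)$ combined with the uniform mass bound. The only cosmetic difference is that the paper packages the compactness step as equicontinuity with values in $W^{-1,1}(\RR^N)$ and invokes a weak-$*$ Ascoli--Arzel\`a theorem, whereas you carry out the equivalent diagonal extraction over a countable dense family by hand.
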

\begin{proof}
Consider any test function $\varphi(t,x)=\eta(t)\phi(x)$ where $\eta\in C^\infty_c(0,T)$ and $\phi\in C^\infty_c(\RR^N)$. Notice that (\ref{MomentumEquation-mass.form}) gives rise to the following equation in weak form
$$\int_0^T\frac{\partial\eta}{\partial t}(t)\left(\int_{\RR^N}\rho_\varepsilon(t,x)\phi(x)\,dx\right)\,dt=\int_0^T\eta(t)\left(\int_{\RR^N}j_\varepsilon(t,x)\cdot \nabla_x\phi(x)\,dx\right)\,dt.$$
Since the scaled current is bounded in $L^2(0,T;L^1(\RR^N))$, then
$$\left\vert\int_0^T\frac{\partial\eta}{\partial t}(t)\left(\int_{\RR^N}\rho_\varepsilon(t,x)\phi(x)\,dx\right)\,dt\right\vert\leq \Vert \eta\Vert_{L^2(0,T)}\left\Vert j_\varepsilon\right\Vert_{L^2(0,T;L^1(\RR^N))}\Vert \phi\Vert_{W^{1,\infty}(\RR^N)}.$$
A well know characterization of Sobolev space leads to
$$\left\Vert \int_{\RR^N}\rho_\varepsilon(\cdot ,x)\phi(x)\,dx\right\Vert_{H^1(0,T)}\leq \left\Vert j_\varepsilon\right\Vert_{L^2(0,T;L^1(\RR^N))}\Vert \phi\Vert_{W^{1,\infty}(\RR^N)}.$$
In particular,
$$\left\vert\int_{\RR^N}(\rho_\varepsilon(t_1,x)-\rho_\varepsilon(t_2,x))\phi(x)\,dx\right\vert\leq \left\Vert j_\varepsilon\right\Vert_{L^2(0,T;L^1(\RR^N))}\Vert \phi\Vert_{W^{1,\infty}(\RR^N)}\vert t_1-t_2\vert^{1/2},$$
for every $t_1,t_2\in [0,T)$. Consequently, the next bound holds
$$\Vert \rho_\varepsilon\Vert_{C^{0,\frac{1}{2}}([0,T];W^{-1,1}(\RR^N))}\leq \left\Vert j_\varepsilon\right\Vert_{L^2(0,T;L^1(\RR^N))}.$$
Notice that the boundedness of $\rho_\varepsilon$ in $C([0,T];W^{-1,1}(\RR^N))$ also follows from the analogous bound in $L^\infty(0,T;L^1(\RR^N))$ (see Proposition \ref{Momenta_equations.pro}) and the chain of continuous inclusions
$$L^1(\RR^N)\hookrightarrow\mathcal{M}(\RR^N)\hookrightarrow W^{-1,1}(\RR^N).$$
Then, the weak star form of the Ascoli--Arzel\`a theorem yields the convergence in $C([0,T];W^{-1,1}(\RR^N)-weak\,*)$ and by density of $W^{1,\infty}(\RR^N)$ in $C_0(\RR^N)$ along with the boundedness of $\rho_\varepsilon$ in $L^\infty(0,T;L^1(\RR^N))$
$$\rho_\varepsilon\rightarrow \rho\hspace{0.25cm}\mbox{ in }C([0,T];\mathcal{M}(\RR^N)-weak\,*).$$
To end the proof, let us note that in particular
$$\rho_\varepsilon(0,\cdot)\overset{*}{\rightharpoonup}\rho(0,\cdot)\ \mbox{ in }\mathcal{M}(\RR^N).$$
By hypothesis (\ref{hypothesis.initial.data.f.form}),
$$\rho_\varepsilon(0,\cdot)\overset{*}{\rightharpoonup}\rho^0\ \mbox{ in }\mathcal{M}(\RR^N).$$
Since the weak-star topology is Hausdorff, one concludes that both limits agrees, i.e., $\rho(0,\cdot)=\rho^0$.
\end{proof}

Let us conclude our assertion with the next result.

\begin{cor}\label{Joint_convergence.cor}
Let the initial distribution functions $f_\varepsilon^0$ verify (\ref{hypothesis.initial.data.f.form}), the external forces $-\nabla\psi_\varepsilon$ fulfil (\ref{hypothesis.initial.data.F.form}) and consider the strong global in time solution $f_\varepsilon$ to (\ref{VPFPCS-dimensionless-hyperbolicscaling.restated.form}) with initial data $f_\varepsilon^0$ and $\lambda\in (0,N/2)$. Then,
$$\rho_\varepsilon\otimes j_\varepsilon\overset{*}{\rightharpoonup} \rho\otimes j\ \mbox{ in }\ L^2(0,T;\mathcal{M}(\RR^{2N}))^N.$$
\end{cor}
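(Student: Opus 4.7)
My plan is to upgrade the separate convergences $\rho_\varepsilon \to \rho$ in $C([0,T];\mathcal{M}(\RR^N)-weak\,*)$ (Theorem \ref{Mass_convergence_strong_weak_star.theo}) and $j_\varepsilon\overset{*}{\rightharpoonup} j$ in $L^2(0,T;\mathcal{M}(\RR^N))^N$ into joint weak-star convergence of their tensor products. Since Proposition \ref{Momenta_equations.pro} and Corollary \ref{Moments_bounds_k=2.cor} give a uniform bound on $\|\rho_\varepsilon\|_{L^\infty(0,T;\mathcal{M}(\RR^N))}\|j_\varepsilon\|_{L^2(0,T;\mathcal{M}(\RR^N))^N}$, the sequence $\rho_\varepsilon\otimes j_\varepsilon$ is uniformly bounded in $L^2(0,T;\mathcal{M}(\RR^{2N}))^N$, and a density argument reduces the task to testing against functions $\Phi\in C_c([0,T]\times\RR^{2N},\RR^N)$. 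For such $\Phi$, Fubini lets me rewrite
$$\int_0^T\int_{\RR^{2N}}\Phi\cdot d(\rho_\varepsilon(t)\otimes j_\varepsilon(t))\,dt=\int_0^T\langle j_\varepsilon(t),g_\varepsilon(t,\cdot)\rangle\,dt,\quad g_\varepsilon(t,y):=\int_{\RR^N}\Phi(t,x,y)\,d\rho_\varepsilon(t,x),$$
and similarly define $g(t,y):=\int_{\RR^N}\Phi(t,x,y)\,d\rho(t,x)$.

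The central step will be to prove the uniform convergence $\|g_\varepsilon-g\|_{C([0,T]\times\RR^N)}\to 0$, with uniformly compactly supported limits in $y$. Since $\Phi$ is continuous and compactly supported, the slice family $\mathcal{K}:=\{\Phi(t,\cdot,y):(t,y)\in[0,T]\times\RR^N\}$ is equibounded, equicontinuous, and supported in a common compact set of $\RR^N$; hence $\mathcal{K}$ is relatively compact in $C_0(\RR^N)$ by the Arzel\`a--Ascoli theorem. Combining this with the uniform-in-time weak-star convergence $\rho_\varepsilon(t)\overset{*}{\rightharpoonup}\rho(t)$ granted by Theorem \ref{Mass_convergence_strong_weak_star.theo}, together with the uniform total-variation bound $\|\rho_\varepsilon(t)\|_{\mathcal{M}(\RR^N)}\leq M_0$, a standard $\varepsilon/3$ argument over a finite cover of $\mathcal{K}$ promotes the convergence, uniform in $t$ against each fixed test function, into convergence uniform in $t$ and uniform over $\phi\in\mathcal{K}$. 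This is precisely the desired uniform convergence of $g_\varepsilon$ to $g$.

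Once this is established, I will decompose
$$\int_0^T\langle j_\varepsilon,g_\varepsilon\rangle\,dt-\int_0^T\langle j,g\rangle\,dt=\int_0^T\langle j_\varepsilon,g_\varepsilon-g\rangle\,dt+\int_0^T\langle j_\varepsilon-j,g\rangle\,dt.$$
The first summand is controlled by $T^{1/2}\|g_\varepsilon-g\|_{C([0,T]\times\RR^N)}\|j_\varepsilon\|_{L^2(0,T;\mathcal{M}(\RR^N))^N}$, which tends to zero thanks to the a priori bounds \eqref{Moment_v_firstorder.form}--\eqref{Moment_v_secondorder.form} and the uniform convergence just proved. The second summand vanishes because $g\in L^2(0,T;C_0(\RR^N))^N$ is an admissible test function for the weak-star convergence of $j_\varepsilon$. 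The only genuinely delicate point is the passage from pointwise (in each test function) weak-star convergence of $\rho_\varepsilon(t)$ to uniform convergence against the whole compact family $\mathcal{K}$; this is exactly where the temporal Hölder-type equicontinuity of $\rho_\varepsilon$ derived in the proof of Theorem \ref{Mass_convergence_strong_weak_star.theo} is essential, since otherwise one cannot exchange the sup in $(t,y)$ with the weak-star limit.
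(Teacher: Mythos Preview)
Your argument is correct and rests on the same core mechanism as the paper's proof: the strong convergence $\rho_\varepsilon\to\rho$ in $C([0,T];\mathcal{M}(\RR^N)-weak\,*)$ from Theorem \ref{Mass_convergence_strong_weak_star.theo} compensates for the merely weak-star convergence of $j_\varepsilon$, allowing the product to pass to the limit. The execution, however, differs. The paper exploits density of \emph{separable} test functions $\varphi(t,x,y)=\eta(t)\phi(x)\psi(y)$ with $\eta\in L^2(0,T)$ and $\phi,\psi\in C_0(\RR^N)$; with variables decoupled from the outset, the difference $\int\varphi\,d(\rho_\varepsilon\otimes j_\varepsilon-\rho\otimes j)$ splits into three terms, each handled directly by the one-dimensional convergences (e.g.\ $R_\varepsilon(t):=\int\phi\,d(\rho_\varepsilon(t)-\rho(t))\to 0$ uniformly in $t$, so $\eta R_\varepsilon\to 0$ in $L^2(0,T)$ and can be paired against $j_\varepsilon-j$). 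Your route instead keeps a general $\Phi\in C_c$, forms the partial integral $g_\varepsilon(t,y)=\int\Phi(t,x,y)\,d\rho_\varepsilon(t,x)$, and needs an Arzel\`a--Ascoli step to show that the slice family $\{\Phi(t,\cdot,y)\}$ is precompact in $C_0(\RR^N)$, so that weak-star convergence uniform in $t$ upgrades to uniform convergence of $g_\varepsilon$. This works, and is perhaps more transparently ``two-term'' in the final decomposition, but the paper's separable reduction is shorter precisely because it bypasses the compactness argument: once $\phi$ is fixed, there is no family over which to take a supremum. One minor remark: your closing comment that the temporal H\"older equicontinuity from the proof of Theorem \ref{Mass_convergence_strong_weak_star.theo} is ``essential'' slightly overstates things; what you actually use is only its \emph{conclusion}, namely convergence in $C([0,T];\mathcal{M}(\RR^N)-weak\,*)$.
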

\begin{proof}
Notice that the Riesz representation theorem for $L^p$ dual spaces and finite Radon measures leads to the next identification
$$L^2(0,T;\mathcal{M}(\RR^{2N}))\equiv L^2(0,T;C_0(\RR^{2N}))^*.$$
Let us then consider a test function $\varphi\in L^2(0,T;C_0(\RR^{2N}))$. By density, one can assume that $\varphi$ takes the form
$$\varphi(t,x,y):=\eta(t)\phi(x)\psi(y),$$
for $\eta\in L^2(0,T)$ and $\phi,\psi\in C_0(\RR^N)$. The objective is to show that
$$I_{i,\varepsilon}:=\int_0^T\int_{\RR^N}\int_{\RR^N}\varphi(t,x,y)\left(d_x(\rho_\varepsilon(t))d_y\left(j_\varepsilon(t)\right)^i-d_x(\rho(t))d_y(j(t))^i\right)\,dt\rightarrow 0,$$
as $\varepsilon\rightarrow 0$. The superscript $i\in \{1,\ldots,N\}$ stands for each component of the above current vectors. $I_{i,\varepsilon}$ can be split as follows
$$I_{i,\varepsilon}=I_{i,\varepsilon}^1+I_{i,\varepsilon}^2+I_{i,\varepsilon}^3,$$
where each term reads
\begin{align*}
I_{i,\varepsilon}^1&:=\int_0^T\eta(t)\left(\int_{\RR^N}\phi(x)d_x(\rho_\varepsilon(t)-\rho(t))\right)\left(\int_{\RR^N}\psi(y)d_y\left(j\varepsilon(t)-j(t)\right)^i\right)\,dt,\\
I_{i,\varepsilon}^2&:=\int_0^T\eta(t)\left(\int_{\RR^N}\phi(x)d_x(\rho_\varepsilon(t)-\rho(t))\right)\left(\int_{\RR^N}\psi(y)d_y\left(j(t)\right)^i\right)\,dt,\\
I_{i,\varepsilon}^3&:=\int_0^T\eta(t)\left(\int_{\RR^N}\phi(x)d_x(\rho(t))\right)\left(\int_{\RR^N}\psi(y)d_y\left(j_\varepsilon(t)-j(t)\right)^i\right)\,dt.
\end{align*}
We will restrict to the first term, $I_{i,\varepsilon}^1$, since the reasoning in the remaining two terms is similar. Let us define the sequence of scalar functions given by
$$R_\varepsilon(t):=\int_{\RR^N}\phi(x)d_x\left(\rho_\varepsilon(t)-\rho(t)\right),\hspace{0.5cm}\eta_\varepsilon(t):=\eta(t)R_\varepsilon(t).$$
By Theorem \ref{Mass_convergence_strong_weak_star.theo} one has that
\begin{align*}
R_\varepsilon&\rightarrow 0\hspace{0.25cm}\mbox{ in }L^\infty(0,T),\\
\eta_\varepsilon&\rightarrow 0\hspace{0.25cm}\mbox{ in }L^2(0,T).
\end{align*}
Then, $I_{i,\varepsilon}^1$ can be restated as
$$I_{i,\varepsilon}^1=\int_0^T\int_{\RR^N}\eta_\varepsilon(t)\psi(y)\,d_y\left(j_\varepsilon(t)-j(t)\right)^i\,dt.$$
Therefore, such term becomes zero when $\varepsilon\rightarrow 0$ because the test functions $\eta_\varepsilon\otimes \psi$ strongly converges to zero in $L^2(0,T;C_0(\RR^N))$ and $j_\varepsilon-j$ also converges to zero weakly star in $L^2(0,T;\mathcal{M}(\RR^N))$ by (\ref{Current_convergence_weak_star.form}).
\end{proof}

\subsubsection*{Second step: Non-concentration}
Recall that the limiting kernels $H_\varphi^{\lambda,\varepsilon}$ are continuous except at most at the diagonal points $x=y$. Then, in order for the nonlinear term in weak form
$$\frac{1}{2}\int_0^T\int_{\RR^{N}}\int_{\RR^N}H_\varphi^{\lambda,\varepsilon}(t,x,y)\cdot (\rho_\varepsilon(t,x)j_\varepsilon(t,y)-\rho_\varepsilon(t,y)j_\varepsilon(t,x))\,dx\,dy,$$
to pass to the limit, we have to ensure that the limiting terms $\rho\otimes j-j\otimes \rho$ do not concentrate on the set of points of discontinuity of $H_\varepsilon^{\lambda,0}$. This is the content of the next result, that holds for $\lambda$ belonging to the whole range $(0,N/2)$.

\begin{lem}\label{non-concentration.lem}
Let the initial distribution functions $f_\varepsilon^0$ verify (\ref{hypothesis.initial.data.f.form}), the external forces $-\nabla\psi_\varepsilon$ fulfil (\ref{hypothesis.initial.data.F.form}) and consider the strong global in time solution $f_\varepsilon$ to (\ref{VPFPCS-dimensionless-hyperbolicscaling.restated.form}) with initial data $f_\varepsilon^0$ and $\lambda\in (0,N/2)$. Then 
$$\liminf_{R,\varepsilon\rightarrow 0}\vert \rho_\varepsilon(t,\cdot)\otimes j_\varepsilon(t,\cdot)-j_\varepsilon(t,\cdot)\otimes \rho_\varepsilon(t,\cdot)\vert (\Omega_R)=0,\ \mbox{ a.e. }t\in [0,T).$$
where $\Omega_R$ denotes the augmented diagonal of $\RR^{N}\times \RR^N$ with radius $R>0$, i.e., 
$$\Omega_R:=\{(x,y)\in \RR^{2N}:\,\vert x-y\vert<R\}.$$
\end{lem}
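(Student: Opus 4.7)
The plan is to exploit the uniform bound on the alignment dissipation
$$D_\varepsilon(t) := \int_{\RR^{4N}}\phi_\varepsilon(|x-y|)\,|v-w|^2\, f_\varepsilon(t,x,v)\,f_\varepsilon(t,y,w)\,dv\,dw\,dx\,dy$$
provided by Corollary \ref{Moments_bounds_k=2.cor} through a double Cauchy--Schwarz argument based on the trivial splitting $1=\phi_\varepsilon(|x-y|)^{1/2}\cdot\phi_\varepsilon(|x-y|)^{-1/2}$. The starting identity is
$$\rho_\varepsilon(t,x)j_\varepsilon(t,y)-j_\varepsilon(t,x)\rho_\varepsilon(t,y)=\int_{\RR^{2N}}(w-v)\,f_\varepsilon(t,x,v)\,f_\varepsilon(t,y,w)\,dv\,dw,$$
which, taking absolute values componentwise, yields
$$|\rho_\varepsilon(t,\cdot)\otimes j_\varepsilon(t,\cdot)-j_\varepsilon(t,\cdot)\otimes \rho_\varepsilon(t,\cdot)|(\Omega_R)\le C_N\int_{\Omega_R}\int_{\RR^{2N}}|v-w|\,f_\varepsilon(t,x,v)\,f_\varepsilon(t,y,w)\,dv\,dw\,dx\,dy.$$

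Inserting the above factorization into the right hand side and applying Cauchy--Schwarz in the resulting eight-fold integral factors the estimate as
$$|\rho_\varepsilon\otimes j_\varepsilon-j_\varepsilon\otimes\rho_\varepsilon|(\Omega_R)\le C_N\, D_\varepsilon(t)^{1/2}\left(\int_{\Omega_R}\bigl(\varepsilon^2+c_\lambda|x-y|^2\bigr)^\lambda \rho_\varepsilon(t,x)\rho_\varepsilon(t,y)\,dx\,dy\right)^{1/2},$$
where we used $\phi_\varepsilon(r)^{-1}=(\varepsilon^2+c_\lambda r^2)^\lambda$. Since this weight is increasing in $r$ and total mass is conserved by Proposition \ref{Momenta_equations.pro}, the parenthesis is bounded by $(\varepsilon^2+c_\lambda R^2)^\lambda M_0^2$, and we conclude
$$|\rho_\varepsilon(t,\cdot)\otimes j_\varepsilon(t,\cdot)-j_\varepsilon(t,\cdot)\otimes\rho_\varepsilon(t,\cdot)|(\Omega_R)\le C_N\,M_0\,D_\varepsilon(t)^{1/2}(\varepsilon^2+c_\lambda R^2)^{\lambda/2}.$$

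To pass to the $\liminf$, note that Corollary \ref{Moments_bounds_k=2.cor} also provides $\int_0^T D_\varepsilon(t)\,dt\le C$ uniformly in $\varepsilon$, so Fatou's lemma ensures $\liminf_{\varepsilon\to 0}D_\varepsilon(t)<+\infty$ for almost every $t\in[0,T)$. Fixing such a $t$, one extracts a subsequence $\varepsilon_n\to 0$ along which $D_{\varepsilon_n}(t)$ stays bounded, and then a diagonal extraction in $(\varepsilon_n,R)$ sends the right-hand side of the previous inequality to $0$, yielding the claimed $\liminf=0$. The only mild subtlety is that the extracted subsequence may depend on $t$, which is harmless because the statement is almost everywhere in $t$; note that the argument is insensitive to the value of $\lambda\in(0,N/2)$, since the extra exponent $\lambda/2$ in the decay rate is always strictly positive, so no further restriction on $\lambda$ is needed at this step.
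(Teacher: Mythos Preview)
Your argument is correct and follows essentially the same route as the paper: the key identity $\rho_\varepsilon(x)j_\varepsilon(y)-j_\varepsilon(x)\rho_\varepsilon(y)=\iint (w-v)f_\varepsilon f_\varepsilon$, the insertion of $\phi_\varepsilon^{1/2}\phi_\varepsilon^{-1/2}$, Cauchy--Schwarz against the dissipation $D_\varepsilon$, and the bound $\phi_\varepsilon^{-1}\le(\varepsilon^2+c_\lambda R^2)^\lambda$ on $\Omega_R$ are all exactly the paper's moves. The only cosmetic difference is in the handling of time: the paper integrates the pointwise estimate in $L^2(0,T)$ first (so that Cauchy--Schwarz uses the full time-integrated dissipation bound directly) and then invokes Fatou to pass to the a.e.\ statement, whereas you apply Fatou to $\int_0^T D_\varepsilon\,dt$ to secure a.e.\ finiteness of $\liminf D_\varepsilon(t)$ and then extract a $t$-dependent subsequence; both are fine.
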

\begin{proof}
Consider any $\varepsilon>0$, fix any radius $R>0$ and note that
\begin{align*}
\vert\rho_\varepsilon(t,\cdot)&\otimes j_\varepsilon(t,\cdot)-j_\varepsilon(t,\cdot)\otimes \rho_\varepsilon(t,\cdot)\vert (\Omega_R)\\
&= \iint_{\vert x-y\vert<R}\vert\rho_\varepsilon(t,x)\otimes j_\varepsilon(t,y)-j_\varepsilon(t,x)\otimes \rho_\varepsilon(t,y)\vert\,dx\,dy\\
&\leq (\varepsilon^2+R^2)^{\lambda/2}\int_{\RR^{4N}}\vert v-w\vert \phi_\varepsilon(\vert x-y\vert)^{1/2}f_\varepsilon(t,x,v)f_\varepsilon(t,y,w)\,dx\,dy\,dv\,dw.
\end{align*}
Then, the Cauchy--Schwartz inequality along with the estimate (\ref{Moment_v_cuadratic.form}) in Corollary \ref{Moments_bounds_k=2.cor} yield
$$\left(\int_0^T\vert\rho_\varepsilon(t,\cdot)\otimes j_\varepsilon(t,\cdot)-j_\varepsilon(t,\cdot)\otimes \rho_\varepsilon(t,\cdot)\vert (\Omega_R)^2\,dt\right)^{1/2}\leq (\varepsilon^2+R^2)^{\lambda/2}(2\varepsilon E_0+(2NT+F_0^2)M_0)^{1/2}M_0,$$
and consequently, taking limits when $\varepsilon$ and $R$ become zero
$$\liminf_{\varepsilon,R\rightarrow 0}\left(\int_0^T\vert\rho_\varepsilon(t,\cdot)\otimes j_\varepsilon(t,\cdot)-j_\varepsilon(t,\cdot)\otimes \rho_\varepsilon(t,\cdot)\vert (\Omega_R)^2\,dt\right)^{1/2}=0.$$
Then, the proof is done by virtue of Fatou's lemma.
\end{proof}

Now, let us recall the next result that may be used to show how to pass to the limit on a sequence of finite Radon measures (without distinguished sign) that do not exhibit concentrations on a set, against a bounded function that is discontinuous, at most, on such set of concentrations. It is a well known result for positive measures and it is the cornerstone in many other frameworks (e.g. 2D Euler with signed vortex sheet initial data \cite{Delort,Schochet} or the 2D Vlasov--Poisson--Fokker--Planck system \cite{NietoPoupaudSoler-Hiperbolico}). The proof for positive measures can be found in \cite[Lemma 2.1]{Poupaud}, \cite{Schochet} or \cite[Theorems 62-63, Chapter IV]{Schwartz}.

\begin{pro}\label{limitetestdiscontinua.pro}
Let $\{\mu_\varepsilon\}_{\varepsilon>0}\subseteq \mathcal{M}(\RR^N)$ be a sequence of finite Radon measures (without distinguished sign) and assume that
\begin{enumerate}
\item $\mu_\varepsilon\overset{*}{\rightharpoonup}\mu$ in $\mathcal{M}(\RR^N)$ when $\varepsilon\rightarrow 0$ for some $\mu \in \mathcal{M}(\RR^N)$,
\item there exists some closed set $C\subseteq \RR^N$ such that
$$\liminf_{\varepsilon,R\rightarrow 0}\vert \mu_\varepsilon\vert (\Omega_R)=0,$$
where $\Omega_R$ stands for the augmented $C$ with radius $R>0$, i.e., 
$$\Omega_R:=C+B_R(0)=\{x\in\RR^N:\dist(x,C)<R\}.$$
Then,
\begin{equation}\label{limitetestdiscontinua.form}
\lim_{\varepsilon\rightarrow 0}\int_{\RR^N}\varphi\,d\mu_\varepsilon=\int_{\RR^N}\varphi\,d\mu,
\end{equation}
for every measurable and bounded function $\varphi:\RR^N\longrightarrow\RR$ whose discontinuity points lie in $C$ and such that $\varphi$ falls off at infinity.
\end{enumerate}
\end{pro}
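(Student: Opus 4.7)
The plan is to insulate the set $C$ of potential discontinuities of $\varphi$ by a smooth cut-off, splitting the problem into a piece where $\varphi$ can be treated as a genuine $C_0(\RR^N)$ function (so that the weak-star hypothesis (1) applies directly) and a local piece concentrated near $C$ whose mass is controlled by the non-concentration hypothesis (2). Since the $\mu_\varepsilon$ are not assumed to have a distinguished sign, the local piece must be estimated in total variation, which is precisely the quantity appearing in (2).

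Concretely, for each $R>0$ I would pick $\chi_R\in C_c^\infty(\RR^N)$ with $0\leq \chi_R\leq 1$, $\chi_R\equiv 1$ on $\Omega_{R/2}$ and $\supp\chi_R\subset \Omega_R$, and write
\begin{equation*}
\int_{\RR^N}\varphi\,d\mu_\varepsilon=\int_{\RR^N}(1-\chi_R)\varphi\,d\mu_\varepsilon+\int_{\RR^N}\chi_R\varphi\,d\mu_\varepsilon,
\end{equation*}
and similarly for $\mu$. Since $\supp(1-\chi_R)\subset \RR^N\setminus \Omega_{R/2}$ avoids $C$, the function $(1-\chi_R)\varphi$ is continuous everywhere and still falls off at infinity, so it belongs to $C_0(\RR^N)$; hypothesis (1) then yields
\begin{equation*}
\int_{\RR^N}(1-\chi_R)\varphi\,d\mu_\varepsilon\longrightarrow \int_{\RR^N}(1-\chi_R)\varphi\,d\mu\qquad \text{as}\ \varepsilon\rightarrow 0
\end{equation*}
for each fixed $R>0$. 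The remaining local pieces are controlled by the elementary bounds
\begin{equation*}
\left|\int_{\RR^N}\chi_R\varphi\,d\mu_\varepsilon\right|\leq \|\varphi\|_\infty\,|\mu_\varepsilon|(\Omega_R),\qquad \left|\int_{\RR^N}\chi_R\varphi\,d\mu\right|\leq \|\varphi\|_\infty\,|\mu|(\Omega_R).
\end{equation*}

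Hypothesis (2) directly tames the first of these bounds. For the second I would invoke the lower semicontinuity of the total variation under weak-star convergence, $|\mu|(U)\leq \liminf_{\varepsilon\rightarrow 0}|\mu_\varepsilon|(U)$ for every open set $U$, applied to $\Omega_R$ and combined with (2) to deduce $|\mu|(C)=\lim_{R\rightarrow 0}|\mu|(\Omega_R)=0$. Given $\eta>0$, one then selects $R$ small enough, and $\varepsilon$ along the subsequence realising the joint liminf, so that both local contributions lie below $\eta/3$, and finally lets $\varepsilon\rightarrow 0$ on the continuous piece to close the estimate. The main delicacy lies precisely in the interaction between the two parameters: because hypothesis (2) is a joint liminf in $(\varepsilon,R)$ rather than a genuine double limit, the conclusion is really obtained along a diagonal subsequence $(\varepsilon_n,R_n)$, and one must check that the weak-star step, which is valid for each fixed $R$, remains compatible with shrinking $R$ simultaneously.
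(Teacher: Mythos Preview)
Your cut-off argument is correct and essentially reproves Poupaud's Lemma 2.1 directly in the signed setting, but it is a genuinely different route from the paper. The paper instead takes the Hahn--Jordan decomposition $\mu_\varepsilon=\mu_\varepsilon^+-\mu_\varepsilon^-$, passes to weak-star limits $\mu_\varepsilon^\pm\overset{*}{\rightharpoonup}\mu^\pm$ of the positive and negative parts along a subsequence, observes that $\mu_\varepsilon^\pm\leq|\mu_\varepsilon|$ so that hypothesis (2) transfers to each part, shows $\mu^\pm(C)=0$ via lower semicontinuity, and then simply invokes the known positive-measure version of the result (Poupaud, Lemma~2.1) for each of $\mu_\varepsilon^+$ and $\mu_\varepsilon^-$ separately. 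Your approach is more self-contained and avoids the external citation; the paper's is shorter once that lemma is taken as given. Both arguments hinge on the same lower-semicontinuity step $|\mu|(\Omega_R)\leq\liminf_\varepsilon|\mu_\varepsilon|(\Omega_R)$ to deduce $|\mu|(C)=0$, and both face the same tension you correctly flag at the end: hypothesis (2) is a joint $\liminf$ in $(\varepsilon,R)$, so one is really working along a diagonal subsequence, and the paper does not spell out this passage any more carefully than you do.
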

\begin{proof}
Let us decompose $\mu_\varepsilon$ into its positive and negative part, according to Hahn's decomposition theorem, i.e., $\mu_\varepsilon=\mu_\varepsilon^+-\mu_\varepsilon^-$. We can assume, without loss of generality, that
$$\mu_\varepsilon^{\pm}\overset{*}{\rightharpoonup}\mu^{\pm}\ \mbox{ in }\ \mathcal{M}(\RR^N),$$
where $\mu^{\pm}\in\mathcal{M}(\RR^N)$ do not necessarily agree with the Jordan decomposition of $\mu=\mu^+-\mu^-$.
By hypothesis, it is clear that
$$\liminf_{\varepsilon,R\rightarrow 0}\mu_\varepsilon^{\pm}(\Omega_R)=0.$$
As a consequence of the weak-star lower semicontinuity of the total variation norm, one obtains
$$\mu^{\pm}(\Omega_R)\leq\liminf_{\varepsilon\rightarrow 0} \mu_\varepsilon^{\pm}(\Omega_R).$$
Then, by monotonicity $\mu^{\pm}(C)=0$ and one can then apply Lemma 2.1 in \cite{Poupaud} to the positive measures $\mu_\varepsilon^+$ and $\mu_\varepsilon^-$ to end the proof of the theorem.
\end{proof}

\begin{rem}
An important fact to be remarked is that as stated in \cite{Poupaud}, the above result is not true if one only assumes that $\vert \mu\vert(C)=0$. The example there exhibited is
$$\mu_\varepsilon=\delta_{\varepsilon}-\delta_{-\varepsilon},\ \ \mu=0,$$
that obviously verify $\vert \mu\vert(\{0\})=0$ but fails (\ref{limitetestdiscontinua.form}) (take $\varphi$ any cutoff function at infinity of the sign function). There is no contradiction on that since the non-concentration property in the above result also fails
$$\liminf_{\varepsilon,R\rightarrow}\vert \mu_\varepsilon\vert(-R,R)=\liminf_{\varepsilon,R\rightarrow 0}(\delta_\varepsilon+\delta_{-\varepsilon})(-R,R)=2\neq 0.$$
\end{rem}

\subsubsection*{Third step: Convergence}
To end this section, let us show that the above results allow us to take limits in (\ref{Current_equation_weak.form}) as $\varepsilon\rightarrow 0$ for the restricted range of the parameter $\lambda\in (0,1/2]$. For $\lambda\in (0,1/2)$, it is a direct consequence of the Corollary \ref{Joint_convergence.cor} and the uniform convergence (\ref{Kernel_uniform_convergence.form}). The case $\lambda=1/2$ requires a special analysis since (\ref{Kernel_uniform_convergence.form}) does not hold. However, the estimate (\ref{Moment_v_cuadratic.form}) of the dissipation of kinetic energy due to alignment interactions will suffices to reinforce the lack of uniformity in the convergence of the approximate kernels $H_\varphi^{\lambda,\varepsilon}$.
\begin{cor}\label{Commutator_forces_limits.cor}
Let the initial distribution functions $f_\varepsilon^0$ verify (\ref{hypothesis.initial.data.f.form}), the external forces $-\nabla\psi_\varepsilon$ fulfil (\ref{hypothesis.initial.data.F.form}) and consider the strong global in time solution $f_\varepsilon$ to (\ref{VPFPCS-dimensionless-hyperbolicscaling.restated.form}) with initial data $f_\varepsilon^0$ and $\lambda\in (0,1/2]$.
Then, both terms in (\ref{Current_equation_weak.form}) pass to the limits as $\varepsilon\rightarrow 0$. Specifically,
\begin{align*}
\lim_{\varepsilon\rightarrow 0}\frac{1}{2}\int_0^T\int_{\RR^N}\int_{\RR^N}H_\varphi^{\lambda,\varepsilon}(\rho_\varepsilon\otimes j_\varepsilon-j_\varepsilon\otimes \rho_\varepsilon)\,dx\,dy\,dt&=\frac{1}{2}\int_0^T\int_{\RR^N}\int_{\RR^N}H_\varphi^{\lambda,0}(\rho\otimes j-j\otimes \rho)\,dx\,dy\,dt,\\
\lim_{\varepsilon\rightarrow 0}\int_0^T\int_{\RR^N}\rho_\varepsilon\,\nabla\psi_\varepsilon\cdot \varphi\,dx\,dt&=\int_0^T\int_{\RR^N}\rho\,\nabla\psi\cdot \varphi\,dx\,dt,
\end{align*}
for every test function $\varphi\in C^\infty_c([0,T)\times \RR^N,\RR^N)$.
\end{cor}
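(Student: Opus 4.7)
My plan is to handle the two limits in the corollary separately. The force-term limit is essentially routine and is obtained by combining the strong convergence of $\nabla\psi_\varepsilon$ in $L^1(0,T;C_0(\RR^N))$ with the improved convergence $\rho_\varepsilon\to\rho$ in $C([0,T];\mathcal{M}(\RR^N)-w^*)$ supplied by Theorem \ref{Mass_convergence_strong_weak_star.theo}. Concretely, I would decompose
\begin{equation*}
\int_0^T\!\!\int_{\RR^N}(\rho_\varepsilon\nabla\psi_\varepsilon-\rho\,\nabla\psi)\cdot\varphi\,dx\,dt = \int_0^T\!\!\int_{\RR^N}\rho_\varepsilon(\nabla\psi_\varepsilon-\nabla\psi)\cdot\varphi\,dx\,dt + \int_0^T\!\!\int_{\RR^N}(\rho_\varepsilon-\rho)\nabla\psi\cdot\varphi\,dx\,dt,
\end{equation*}
bound the first piece by $\|\varphi\|_\infty\|\rho_\varepsilon\|_{L^\infty(0,T;L^1)}\|\nabla\psi_\varepsilon-\nabla\psi\|_{L^1(0,T;C_0)}$ (which vanishes by \eqref{hypothesis.initial.data.F.form} and the conservation of mass \eqref{Totalmass_constant.form}), and treat the second piece by observing that $\nabla\psi\cdot\varphi$ belongs to $L^1(0,T;C_0(\RR^N))$, so that the weak-star convergence in Theorem \ref{Mass_convergence_strong_weak_star.theo} suffices.

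For the commutator term with $\lambda\in(0,1/2)$, I would set $\mu_\varepsilon:=\rho_\varepsilon\otimes j_\varepsilon-j_\varepsilon\otimes\rho_\varepsilon$ and $\mu:=\rho\otimes j-j\otimes\rho$ and split
\begin{equation*}
\int_0^T\!\!\iint H_\varphi^{\lambda,\varepsilon}\,d\mu_\varepsilon-\int_0^T\!\!\iint H_\varphi^{\lambda,0}\,d\mu = \int_0^T\!\!\iint(H_\varphi^{\lambda,\varepsilon}-H_\varphi^{\lambda,0})\,d\mu_\varepsilon+\int_0^T\!\!\iint H_\varphi^{\lambda,0}\,d(\mu_\varepsilon-\mu).
\end{equation*}
The first summand is dominated by $\|H_\varphi^{\lambda,\varepsilon}-H_\varphi^{\lambda,0}\|_\infty$ times the uniform $L^1(0,T;\mathcal{M}(\RR^{2N}))$ bound on $\mu_\varepsilon$ inherited from the moment estimates \eqref{Moment_v_firstorder.form}--\eqref{Moment_v_secondorder.form}, and it vanishes thanks to the uniform convergence \eqref{Kernel_uniform_convergence.form}. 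For the second summand, Lemma \ref{H_estimates.lem}(i) together with the compact support of $\varphi$ shows that $H_\varphi^{\lambda,0}$ lies in $L^\infty(0,T;C_0(\RR^{2N}))$, so Corollary \ref{Joint_convergence.cor}, applied to $\rho_\varepsilon\otimes j_\varepsilon$ and (after swapping $x\leftrightarrow y$) to $j_\varepsilon\otimes\rho_\varepsilon$, delivers the passage to the limit.

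The main obstacle is the endpoint $\lambda=1/2$, where $H_\varphi^{1/2,0}$ remains bounded but is discontinuous across the diagonal $\Delta$, so that \eqref{Kernel_uniform_convergence.form} fails. To handle this case I would localize near $\Delta$ through a smooth cutoff $\chi_R\in C^\infty(\RR^{2N})$ equal to $1$ outside $\Omega_R$ and to $0$ inside $\Omega_{R/2}$, and split each integral into a bulk part weighted by $\chi_R$ and a residue weighted by $1-\chi_R$. On $\supp\chi_R$ the kernels $\chi_R H_\varphi^{1/2,\varepsilon}$ are continuous, uniformly bounded by $\|\nabla\varphi\|_\infty/\sqrt{c_{1/2}}$, and converge uniformly to $\chi_R H_\varphi^{1/2,0}$ as $\varepsilon\to 0$, so Corollary \ref{Joint_convergence.cor} handles the bulk piece for each fixed $R$. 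For the residue, a rerun of the computation in the proof of Lemma \ref{non-concentration.lem} via Cauchy--Schwarz and the dissipation bound \eqref{Moment_v_cuadratic.form} yields
\begin{equation*}
\int_0^T|\mu_\varepsilon|(\Omega_R)\,dt\leq C\,(\varepsilon^2+R^2)^{1/4},
\end{equation*}
so the residue integrated against $\mu_\varepsilon$ is $\mathcal{O}(R^{1/2})$ uniformly for small $\varepsilon$; weak-star lower semicontinuity of the total variation then transfers the same bound to $\mu$. Sending $\varepsilon\to 0$ for fixed $R$ and then $R\to 0$ closes the argument. The crucial input at the endpoint is the dissipation \eqref{Moment_v_cuadratic.form}, whose $|v-w|$ factor is exactly what balances the extra singularity of $\phi_\varepsilon$ along $\Delta$ when $\lambda$ hits $1/2$.
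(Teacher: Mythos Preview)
Your proof is correct. For the force term and for $\lambda\in(0,1/2)$ it coincides with the paper's argument. At the endpoint $\lambda=1/2$ you take a somewhat different route from the paper. The paper splits $I_\varepsilon=II_\varepsilon+III_\varepsilon$ with $II_\varepsilon=\int(H_\varphi^{1/2,\varepsilon}-H_\varphi^{1/2,0})\,d\mu_\varepsilon$ and $III_\varepsilon=\int H_\varphi^{1/2,0}\,d(\mu_\varepsilon-\mu)$; it controls $II_\varepsilon$ through the pointwise algebraic estimate $|\phi_\varepsilon(r)-\phi_0(r)|\leq \varepsilon^{1/2}\phi_\varepsilon(r)^{1/2}/(\sqrt{3}\,r)$ combined with Cauchy--Schwarz and the dissipation bound \eqref{Moment_v_cuadratic.form} to get $|II_\varepsilon|\leq C\varepsilon^{1/2}$, and it disposes of $III_\varepsilon$ by invoking the abstract non-concentration Proposition~\ref{limitetestdiscontinua.pro} together with Lemma~\ref{non-concentration.lem}. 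Your cutoff $\chi_R$ bypasses the specific $|\phi_\varepsilon-\phi_0|$ inequality entirely: on $\supp\chi_R$ the kernels converge uniformly for trivial reasons, so the bulk piece needs only Corollary~\ref{Joint_convergence.cor}, while the near-diagonal residue is handled by the same dissipation estimate. In effect you are unpacking Proposition~\ref{limitetestdiscontinua.pro} by hand and absorbing the kernel-difference term into the cutoff, which makes the argument more self-contained and avoids the endpoint-specific algebra; the paper's version is slightly sharper (it gives an explicit $\varepsilon^{1/2}$ rate for $II_\varepsilon$) and cleaner once that estimate is available. One small point worth tightening in your write-up: the ``weak-star lower semicontinuity'' step transferring the residue bound to $\mu$ should be phrased for the $L^2(0,T;\mathcal{M}(\RR^{2N}))$ topology in which $\mu_\varepsilon\overset{*}{\rightharpoonup}\mu$ (the map $\nu\mapsto\|\nu\|_{L^2(0,T;\mathcal{M}(\Omega_R))}$ is a supremum of weak-$*$ continuous linear functionals, hence l.s.c.), after which the $L^1$-in-time bound follows by H\"older.
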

\begin{proof}
Since the proof for $\lambda\in (0,1/2)$ is a direct consequence of Corollary \ref{Joint_convergence.cor} and the uniform convergence (\ref{Kernel_uniform_convergence.form}), then we restrict to the endpoint critical case $\lambda=1/2$. To this end, let us show that
$$I_\varepsilon:=\frac{1}{2}\int_0^T\int_{\RR^N}\int_{\RR^N}\left[H_\varphi^{\frac{1}{2},\varepsilon}(\rho_\varepsilon\otimes j_\varepsilon-j_\varepsilon\otimes \rho_\varepsilon)-H_\varphi^{\frac{1}{2},0}(\rho\otimes j-j\otimes \rho)\right]\,dx\,dy\,dt,$$
becomes zero when $\varepsilon\rightarrow 0$. For simplicity, we decompose such integral $I_\varepsilon$ into $I_\varepsilon=II_\varepsilon+III_\varepsilon$, where
\begin{align*}
II_\varepsilon&:=\frac{1}{2}\int _0^T\int_{\RR^N}\int_{\RR^N}(H_\varphi^{\frac{1}{2},\varepsilon}-H_\varphi^{\frac{1}{2},0})(\rho_\varepsilon\otimes j_\varepsilon-j_\varepsilon\otimes \rho_\varepsilon)\,dx\,dy\,dt,\\
III_\varepsilon&:=\frac{1}{2}\int_0^T\int_{\RR^N}\int_{\RR^N}H_\varphi^{\frac{1}{2},0}\left[(\rho_\varepsilon\otimes j_\varepsilon-j_\varepsilon\otimes \rho_\varepsilon)-(\rho\otimes j-j\otimes \rho)\right]\,dx\,dy\,dt.
\end{align*}
First, since $H_\varphi^{\frac{1}{2},0}$ is discontinuous at most at the diagonal $\Delta$ of $\RR^N\times \RR^N$, then Lemma \ref{non-concentration.lem} and Proposition \ref{limitetestdiscontinua.pro} show that $III_\varepsilon\rightarrow 0$ when $\varepsilon\rightarrow 0$. Regarding $II_\varepsilon$, note that
$$\vert \phi_\varepsilon(r)-\phi_0(r)\vert=\frac{(\varepsilon^2+3 r^2)^{1/2}-\sqrt{3}r}{\sqrt{3}r(\varepsilon^2+3 r^2)^{1/2}}\leq \frac{\varepsilon^{1/2}}{\sqrt{3}r}\phi_\varepsilon(r)^{1/2},\ \ r>0.$$
Consequently, there exists come constant $C>0$ depending on the text function $\varphi\in C^\infty_c([0,T)\times \RR^N,\RR^N)$ such that
$$\vert II_\varepsilon\vert\leq C\varepsilon^{1/2}\int_0^T\int_{\RR^{4N}}\vert v-w\vert\phi_\varepsilon(\vert x-y\vert)f_\varepsilon(t,x,v)f_\varepsilon(t,y,w)\,dx\,dy\,dv\,dw\,dt,$$
and by the Cauchy--Schwartz inequality
$$\vert II_\varepsilon\vert\leq CT^{1/2}(2\varepsilon E_0+(2NT+F_0^2)M_0)^{1/2}M_0\varepsilon^{1/2},$$
that obviously implies $II_\varepsilon\rightarrow 0$ when $\varepsilon\rightarrow 0$.

The second part is clear by virtue of the weak-star convergence of $\rho_\varepsilon$ and our hypothesis (\ref{hypothesis.initial.data.F.form}) on the external forces $-\nabla\psi_\varepsilon$.
\end{proof}

\begin{rem}
Let us note that the passage to the limit in the second term in Corollary \ref{Commutator_forces_limits.cor} directly follows from the imposed hypothesis (\ref{hypothesis.initial.data.F.form}) on the external forces $-\nabla\psi_\varepsilon$. Indeed, since $\rho_\varepsilon$ is only known to converge in $L^\infty(0,T;\mathcal{M}(\RR^N))$ according to our a priori estimates, a duality argument then steers us toward considering forces that fall off at infinity, i.e., $-\nabla\psi_\varepsilon\in L^1(0,T;C_0(\RR^N,\RR^N))$ since
$$L^1(0,T;C_0(\RR^N))^*\equiv L^\infty(0,T;\mathcal{M}(\RR^N)),$$
by the Riesz representation theorem. This might seem a very imposing hypothesis on the forces since generally, forces will not be even bounded. A slight improvement of our result (that was not considered in the previous reasoning for the sake of simplicity) is that one can replace the hypothesis $-\nabla\psi_\varepsilon(t,\cdot)\in C_0(\RR^N,\RR^N)$ with $-\nabla\psi_\varepsilon(t,\cdot)\in C(\RR^N,\RR^N)$, i.e., it is not compulsory that forces decay at infinity but only that they remain bounded and continuous. The passage to the limit in the second term in Corollary \ref{Commutator_forces_limits.cor} then goes as follows. Notice that
$$L^1(0,T;\mathcal{B}(\RR^N))^*=L^\infty(0,T;\mathcal{M}(\RR^N)-narrow),$$
where $\mathcal{B}(\RR^N)$ is the space of bounded and continuous functions endowed with the uniform norm and $\mathcal{M}(\RR^N)-narrow$ means the space of finite Radon measures endowed with narrow topology, i.e., the weak topology associated with the duality $\left<\mathcal{B}(\RR^N),\mathcal{M}(\RR^N)\right>$ given by
$$\left<\varphi,\mu\right>:=\int_{\RR^N}\varphi\,d\mu,\ \ \varphi\in \mathcal{B}(\RR^N),\,\mu\in\mathcal{M}(\RR^N).$$
In our case, note that (\ref{Momenta_equation_x.form}) in Proposition \ref{Momenta_equations.pro} with $k=1$ and $k=2$ along with Corollary \ref{Moments_bounds_k=2.cor} yield
\begin{align*}
\Vert \vert x\vert \rho_\varepsilon\Vert_{L^\infty(0,T;L^1(\RR^N))}&\leq \Vert \vert x\vert \rho_\varepsilon^0\Vert_{L^1(\RR^N)}+T^{1/2}M_0^{1/2}\left(2\varepsilon E_0+(2NT+F_0^2)M_0\right)^{1/2},\\
\Vert \vert x\vert^2 \rho_\varepsilon\Vert_{L^\infty(0,T;L^1(\RR^N))}&\leq \Vert \vert x\vert^2 \rho_\varepsilon^0\Vert_{\RR^N}+2\left(T\Vert \vert x\vert \rho_\varepsilon\Vert_{L^\infty(0,T;L^1(\RR^N))}\right)^{1/2}T^{1/2}\left(M_0^{1/2}(2\varepsilon E_0+(2NT+F_0^2)M_0)\right)^{1/4},\\
\Vert \vert x\vert j_\varepsilon\Vert_{L^1(0,T;L^1(\RR^N))}&\leq \frac{1}{2}\left(T\Vert \vert x\vert^2 \rho_\varepsilon\Vert_{L^\infty(0,T;L^1(\RR^N))}+(2E_0+(2NT+F_0^2)M_0)\right).
\end{align*}
Thus, if apart from the hypothesis (\ref{hypothesis.initial.data.f.form}) one also assumes that
$$\int_{\RR^N}\vert x\vert^2 \rho_\varepsilon^0(x)\,dx\leq S_0,$$
then $\rho_\varepsilon\otimes j_\varepsilon$ has some uniformly bounded moment and Prokhorov's compactness theorem entails
$$\rho_\varepsilon\otimes j_\varepsilon \overset{*}{\rightarrow}\rho\otimes j\ \mbox{ in }\ L^2(0,T;\mathcal{M}(\RR^{2N})^N-narrow),$$
then supporting the desired convergence result under less restricted hypothesis on the decay of the forces.

Indeed, as mentioned in the Introduction, one might have considered $-\nabla\psi_\varepsilon$ to be self-generated. For instance, recall that the forces in the Vlasov--Poisson--Fokker--Planck system come from the Poisson law $\Delta\psi_\varepsilon=\theta \rho_\varepsilon$ ($\theta=\pm 1$ suggesting the attractive or repulsive character of the interactions), i.e., 
$$-\nabla\psi_\varepsilon=\theta(\nabla \Gamma_N)*\rho_\varepsilon,$$
where $\Gamma_N$ stands for the fundamental solution of $-\Delta$ in $\RR^N$. Since they are nonlinear, they need to be handled like our nonlinear commutator. Note that when the density only enjoys $L^1$ bounds, then the force $-\nabla\psi_\varepsilon$ can be shown to be bounded at most in the $N=1$ case, but it does not generally falls off at infinity (hence, the above remark is relevant in such setting). The case $N=2$ is even harder and resembles our case $\lambda=1/2$, that required a non-concentration argument of the limiting density \cite{NietoPoupaudSoler-Hiperbolico}. For general $N$, relative entropy methods (that do not work in our case) have been used \cite{GNPS}. 

Consequently, despite having derived a method to deal with bounded continuous forces, each nonlocal force given by a potential $\psi_\varepsilon=k*\rho_\varepsilon$ and some kernel $k$ needs to be treated separately through similar ideas to those developed here and in the related papers \cite{BellouquidCalvoNietoSoler-Intermedio,GNPS,NietoPoupaudSoler-Hiperbolico,PoupaudSoler-Parabolico}. Also, the kernels appearing in fluid models like those in the aggregation equation, the gSQG model or other Euler-type systems (see \cite{BertozziVerdera,CastroCordobaGomezSerrano,delaHozHassainiaHmidi,TopazBertozzi1,TopazBertozzi2}) can be approached via this techniques.
\end{rem}

\section{Analysis of the limiting equations}\label{Section-AnalisisLimit}
In this section we will focus on analyzing the macroscopic system arising in the singular and hydrodynamic limit of the preceding section for the kinetic Cucker--Smale model. For simplicity, let us define the velocity field $u=j/\rho$ and note that such limiting system can be restated as
 \begin{equation}\label{Euler.limit.eq}
 \left\{
 \begin{array}{ll}
 \partial_t\rho+\divop(\rho u)=0, & x\in \RR^N,\,t\geq 0,\\
 \rho(0,x)=\rho^0(x),& x\in\RR^N,\\
 u=\phi_0*(\rho u)-(\phi_0*\rho)u-\nabla\psi, & x\in \RR^N,\,t\geq 0,
 \end{array}
 \right.
 \end{equation}
Notice that such systems consists of $N+1$ unknowns ($\rho$ and $u=(u_1\ldots,u_N)$), $N+1$ equations (a scalar conservation law for $\rho$ and a vector-valued equation for $u$) and a Cauchy datum at $t=0$. 
First, we will revisit the existence theory of weak solutions in $L^p$ for scalar conservation laws in conservative form driven by $W^{1,\infty}$ velocity fields. Second, we will analyze the properties of the above commutator
$$\phi_0*(\rho u)-(\phi_0*\rho)u=-\frac{1}{c_\lambda^\lambda}[M_u,I_{N-{2\lambda}}]\rho=-\int_{\RR^N}\phi_0(\vert x-y\vert)(u(t,x)-u(t,y))\rho(t,y)\,dy,$$
for $u(t,\cdot)\in W^{1,\infty}(\RR^N,\RR^N)$ and $\rho(t,\cdot)\in L^p(\RR^N)$. Finally, we will arrange the above properties about the scalar conservation law and the commutator to obtain a solution to (\ref{Euler.limit.eq}) by means of a fixed point procedure.

\subsection{Linear transport equations in conservative form with Lipschitz continuous field}

\begin{theo}\label{Scalar.conservation.law.theo}
Set a velocity field $u\in L^1(0,T;W^{1,\infty}(\RR^N,\RR^N))$ and consider the next Cauchy problem for the density $\rho=\rho(t,x)$
\begin{equation}\label{Scalar.conservation.law.eq}
\left\{
\begin{array}{ll}
\displaystyle \partial_t \rho+\divop(\rho u)=0, & x\in \RR^N,\,t\geq 0,\\
\displaystyle \rho(0,x)=\rho^0(x), & x\in \RR^N.
\end{array}
\right.
\end{equation}
where the initial data $\rho^0\geq 0$ is taken in $L^p(\RR^N)$ for some $1\leq p\leq \infty$. Then there exists a weak solution $\rho$ to (\ref{Scalar.conservation.law.eq}) such that $\rho\in L^\infty(0,T;L^p(\RR^N))$. Indeed, when $1<p\leq \infty$ then
$$
\Vert \rho(t,\cdot)\Vert_{L^p(\RR^N)}\leq \exp\left(\frac{1}{p'}\Vert u\Vert_{L^1(0,T;W^{1,\infty}(\RR^N))}\right)\Vert\rho^0\Vert_{L^p(\RR^N)}
$$
for any $t\in [0,T)$, where $p'$ stands for the conjugated exponent of $p$ i.e., $p'=\frac{p}{p-1}$. When $p=1$ we recover the conservation of the total mass, namely,
$$
\Vert \rho(t,\cdot)\Vert_{L^1(\RR^N)}=\Vert\rho^0\Vert_{L^1(\RR^N)}
$$
for any $t\in [0,T)$. When $u\in C^1([0,T);C^1(\RR^N,\RR^N))$, then the above weak solution is indeed unique.
\end{theo}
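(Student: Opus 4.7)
The plan is to follow the classical DiPerna--Lions strategy: mollify the velocity field to obtain a smooth approximation $u_n\in C^1([0,T);C^1(\RR^N,\RR^N))$ with $u_n\to u$ in $L^1(0,T;W^{1,\infty}(\RR^N,\RR^N))$, solve the transport equation classically along the associated characteristic flow $X_n$, derive the $L^p$-bound uniformly in $n$, and then extract a weak-$*$ accumulation point which will be shown to solve \eqref{Scalar.conservation.law.eq} in the distributional sense.

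For the a priori estimate at the approximate level, I would rewrite $\partial_t\rho_n+u_n\cdot\nabla\rho_n=-\rho_n\,\divop u_n$ (smooth coefficients allow the chain rule) and then, for $1<p<\infty$, multiply by $p\,|\rho_n|^{p-2}\rho_n$ and integrate by parts, which after cancelling the transport term yields
\begin{equation*}
\frac{d}{dt}\Vert\rho_n(t,\cdot)\Vert_{L^p(\RR^N)}^p=(1-p)\int_{\RR^N}|\rho_n|^p\,\divop u_n\,dx\leq (p-1)\Vert u_n(t,\cdot)\Vert_{W^{1,\infty}(\RR^N)}\Vert\rho_n(t,\cdot)\Vert_{L^p(\RR^N)}^p.
\end{equation*}
Gr\"onwall's lemma then delivers the claimed bound with constant $\exp(\tfrac{1}{p'}\Vert u\Vert_{L^1(0,T;W^{1,\infty})})$ after passing $n\to\infty$. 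The cases $p=1$ (conservation of mass, obtained by integrating the equation directly since $\divop(\rho_n u_n)$ integrates to zero) and $p=\infty$ (recovered either by letting $p\to\infty$ in the bound or using the explicit representation $\rho_n(t,X_n(t;0,y))=\rho^0(y)\exp(-\int_0^t\divop u_n(s,X_n(s;0,y))\,ds)$ via characteristics) are then endpoint consequences.

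For the passage to the limit, the uniform $L^\infty(0,T;L^p(\RR^N))$ bound yields, along a subsequence, $\rho_n\overset{*}{\rightharpoonup}\rho$ in that space. The only subtle point is showing $\rho_n u_n\to\rho u$ in the sense of distributions, so that the nonlinearity in $\divop(\rho_n u_n)$ passes to the limit. I would split, for any $\varphi\in C^\infty_c([0,T)\times\RR^N,\RR^N)$,
\begin{equation*}
\int_0^T\!\!\int_{\RR^N}\rho_n(u_n-u)\cdot\varphi\,dx\,dt+\int_0^T\!\!\int_{\RR^N}\rho_n\,u\cdot\varphi\,dx\,dt,
\end{equation*}
bound the first term by $\Vert\rho_n\Vert_{L^\infty(L^p)}\Vert\varphi\Vert_{L^\infty(L^{p'})}\Vert u_n-u\Vert_{L^1(L^\infty)}\to 0$ thanks to the strong convergence of $u_n$, and handle the second one by testing the weak-$*$ convergence of $\rho_n$ against $u\cdot\varphi\in L^1(0,T;L^{p'}(\RR^N))$ (when $p=\infty$ the natural pairing is with $L^1$ test functions, which works identically). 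The main technical obstacle is thus this compensation between weak-$*$ and strong convergence, which is exactly where the Lipschitz regularity of $u$ is essential; no stronger compactness of $\rho_n$ is available.

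Finally, for uniqueness under the stronger assumption $u\in C^1([0,T);C^1(\RR^N,\RR^N))$, I would argue by duality. Given two weak solutions $\rho_1,\rho_2$ with the same initial datum, their difference $\rho=\rho_1-\rho_2$ lies in $L^\infty(0,T;L^p(\RR^N))$ and solves the same equation with zero Cauchy datum. For any $\chi\in C^\infty_c(\RR^N)$ and any $\tau\in(0,T)$, the $C^1$ regularity of $u$ allows to solve backward in time the dual transport equation $\partial_t\varphi+u\cdot\nabla\varphi=0$ on $[0,\tau]$ with terminal datum $\varphi(\tau,\cdot)=\chi$ by the method of characteristics, obtaining $\varphi\in C^1([0,\tau]\times\RR^N)$ with compact support uniform in $t$. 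Testing the equation satisfied by $\rho$ against this $\varphi$ and integrating by parts gives $\int_{\RR^N}\rho(\tau,x)\chi(x)\,dx=0$ for every $\chi$, whence $\rho\equiv 0$. This concludes the sketch.
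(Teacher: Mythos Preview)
Your argument is correct, but the paper takes a different and more direct route. Rather than mollifying $u$ and passing to the limit by compactness, the paper works directly with the low-regularity field $u\in L^1(0,T;W^{1,\infty})$: Carath\'eodory's existence theory already yields a globally defined bi-Lipschitz flow $X(t;t_0,x_0)$, and Liouville's formula (justified for non-smooth $u$ by a density argument citing Ambrosio) gives the two-sided bound $\exp(-\Vert u\Vert_{L^1(W^{1,\infty})})\leq J(t;t_0,x_0)\leq \exp(\Vert u\Vert_{L^1(W^{1,\infty})})$ on the Jacobian. The candidate solution is then written down explicitly as $\rho(t,x)=\rho^0(X(0;t,x))\,J(0;t,x)$, the $L^p$ bound follows from a single change of variables, and the weak formulation is verified by computing $\frac{d}{dt}[\varphi(t,X(t;0,x))]$ along characteristics. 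No approximation layer, no weak-$*$ extraction, and no product-limit argument are needed. Your DiPerna--Lions style approach is perfectly sound and arguably more robust (it generalizes more easily to, say, $BV$ or Sobolev vector fields), but it trades the explicit representation formula for a compactness step; the paper's approach buys that formula at the cost of invoking Carath\'eodory theory and the a.e.\ Jacobian bound. For uniqueness both proofs use duality; the paper solves $\partial_t\varphi+u\cdot\nabla\varphi=\psi$ with $\varphi(T)=0$ for arbitrary $\psi\in C^1_c$, which is equivalent to your terminal-value formulation.
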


First, let us recall that $\rho\in L^1_{loc}(0,T;\mathcal{M}(\RR^N))$ is said to be a weak solution to (\ref{Scalar.conservation.law.eq}) when
\begin{equation}\label{Scalar.conservation.law.weak.formulation.eq}
\int_0^T\int_{\RR^N}\left(\frac{\partial\varphi}{\partial t}+u\cdot \nabla \varphi\right)\rho\,dx\,dt=-\int_{\RR^N}\rho^0(x)\varphi(0,x)\,dx,
\end{equation}
for every test function $\varphi\in C^1_c([0,T)\times\RR^N)$.
\begin{proof}
The assumed hypothesis on the transport field $u$ yields the existence of an uniquely defined flux of $u$ in the extended sense of Caratheodory (see \cite[Theorem 1.1, Chapter 2]{CoddingtonLevinson}). Specifically, since the three Caratheodory conditions (continuity in $x$, measurability in $t$ and boundedness both in $t$ and $x$ by a $t$-dependent integrable function) fulfill, then the characteristic system
$$\left\{
\begin{array}{ll}
\displaystyle\frac{dX}{dt}=u(t,X), & t\in [0,T)\\
\displaystyle X(t_0)=x_0, &
\end{array}
\right.$$
has an unique locally absolutely continuous global in time solution $X(t;t_0,x_0)$ for every $0\leq t_0<T$ and $x_0\in\RR^N$ such that the ODE holds in the a.e. sense (notice that absolutely continuous functions are a.e. differentiable). The Lipschitz continuity of $u$ does not only ensures the global definition by the Gronwall lemma but also its uniqueness. Indeed, $X(t;t_0,\cdot)$ is a bi-Lipschitz diffeomorphism uniformly in $t,t_0$. Specifically, note that the Lipschitz continuity with respect to $x$ shows that
\begin{align*}
\frac{d}{dt}\vert X(t;t_0,x_1)-X(t;t_0,x_2)\vert^2&=2\left(X(t;t_0,x_1)-X(t;t_0,x_2)\right)\cdot \left(u(t,X(t;t_0,x_1))-u(t,X(t;t_0,x_2))\right)\\
&\leq2\Vert u(t,\cdot) \Vert_{W^{1,\infty}(\RR^N,\RR^N)}\vert X(t;t_0,x_1)-X(t;t_0,x_2)\vert^2,
\end{align*}
and consequently,
$$
\vert X(t;t_0,x_1)-X(t;t_0,x_2)\vert \leq\exp\left(\Vert u\Vert_{L^1(0,T;W^{1,\infty}(\RR^N,\RR^N))}\right)\vert x_1-x_2\vert,
$$
for every $0\leq t,t_0<T$ and $x_1,x_2\in \RR^N$.
In particular, one can define its Jacobian determinant a.e. because Lipschitz functions are a.e. differentiable by the Rademacher theorem.
$$J(t;t_0,x_0):=\det\left(\jac_{x_0}X(t;t_0,x_0)\right).$$
If $u$ were smooth, then the following equation (Liouville's theorem) would hold
$$\frac{d}{dt}J(t;t_0,x_0)=\divop u(t,X(t;t_0,x_0))\,J(t;t_0,x_0),$$
and consequently,
$$J(t;t_0,x_0)=\exp\left(\int_{t_0}^t\divop u(s,X(s;t_0,x_0))\,ds\right).$$
In particular, one would obtain the next upper and lower bound of $J$
\begin{equation}\label{J.upper.lower.estimate.ineq}
\exp\left(-\Vert u\Vert_{L^1(0,T;W^{1,\infty})}\right)\leq J(t;t_0,x_0)\leq \exp\left(\Vert u\Vert_{L^1(0,T;W^{1,\infty})}\right).
\end{equation}
Nevertheless, it is not compulsory to assume that $u$ is smooth. Indeed, if $u\in L^1(0,T;W^{1,\infty}(\RR^N,\RR^N))$ then (\ref{J.upper.lower.estimate.ineq}) also holds by a density argument (see \cite[Section 2]{Ambrosio}).

The corresponding theory for classical solutions with smooth initial data $\rho^0$ provides the following candidate of weak solution
$$\rho(t,x)=\rho^0(X(0;t,x))\,J(0;t,x)=\left.\left(\frac{\rho^0(y)}{J(t;0,y)}\right)\right\vert_{y=X(0,t,x)}.$$
First of all, let us show that although measurable functions might not have sense at some points (because they are defined almost everywhere) the above definition makes sense and does not depend on the representative. To this end, note that $X(0;t,\cdot)$ has its Jacobian determinant lower and upper bounded by positive constants by virtue of (\ref{J.upper.lower.estimate.ineq}) and, consequently, the theorem of change of variables shows that the flux $X(0;t,\cdot)$ preserves the negligible sets in $\RR^N$. Thus, no matter the chosen representative of the measurable function, the above composition yields functions that agree almost everywhere (hence, representing the same equivalence class).

On the one hand, the desired bound in $L^\infty(0,T;L^p(\RR^N))$ when $1<p\leq \infty$ is clear by the change of variables theorem for bi-Lipschitz vector fields and the above bound (\ref{J.upper.lower.estimate.ineq}). Similarly, the $L^\infty(0,T;L^1(\RR^N))$ estimate is also apparent when $p=1$ through the same argument since the $L^1(\RR^N)$ norm is indeed constant for all time. Let us now show that, so defined, $\rho$ gives rise to a weak solution to (\ref{Scalar.conservation.law.eq}).
We will face first the case $1<p\leq\infty$. Fix any test function $\varphi\in C^1_c([0,T)\times\RR^N)$ and notice that
\begin{align*}
\int_0^T\int_{\RR^N}\left(\frac{\partial\varphi}{\partial t}+u\cdot \nabla\varphi\right)\rho\,dx\,dt&=\int_0^T\int_{\RR^N}\left.\left(\frac{\partial\varphi}{\partial t}+u\cdot\nabla\varphi\right)\right\vert_{(t,X(t;0,x))} \rho^0(x)\,dx\\
&=\int_0^T\int_{\RR^N}\frac{d}{dt}\left[\varphi(t,X(t;0,x))\right]\rho^0(x)\,dx\\
&=-\int_{\RR^N}\varphi(0,x)\rho^0(x)\,d,
\end{align*}
where we have used the fundamental lemma of calculus in the last equality to arrive at the weak formulation (\ref{Scalar.conservation.law.weak.formulation.eq}) for every $\varphi\in C_c(\RR^N)$ (notice that locally absolutely continuous functions also verify such result). A similar argument also proves the weak formulation when $p=1$.

Finally, let us prove the uniqueness of weak solution in the sense of (\ref{Scalar.conservation.law.weak.formulation.eq}) under the assumption that $u$ smooth. Consider two weak solutions $\rho_1(t,x)$ and $\rho_2(t,x)$ to the same Cauchy problem (\ref{Scalar.conservation.law.eq}). Then, it is straightforward that $\rho=\rho_1-\rho_2$ is a weak solution to (\ref{Scalar.conservation.law.eq}) with $\rho^0\equiv 0$, i.e.,
$$\int_0^T\int_{\RR^N}\left(\frac{\partial\varphi}{\partial t}+u\cdot \nabla \varphi\right)\rho\,dx\,dt=0,$$
for every test function $\varphi\in C^1_c([0,T),\RR^N)$. Note that given any test function $\psi\in C_c^1((0,T)\times\RR^N)$ the classical theory also allows solving the Cauchy problem
$$
\left\{
\begin{array}{ll}
\partial_t\varphi+u\cdot \nabla \varphi=\psi, & t\in [0,T]\\
\varphi(T)=0, & 
\end{array}
\right.
$$
then leading to a test function $\varphi\in C^1([0,T),C^1(\RR^N,\RR^N))$. Since $\psi$ is arbitrary, the fundamental lemma of the calculus of variations shows that $\rho\equiv 0$, i.e., $\rho_1\equiv \rho_2$.
\end{proof}

\subsection{Commutator estimates and existence theorem for the limiting macroscopic system}
In this section, we wonder about the regularity properties of the commutator appearing in the limit equation (\ref{Euler.limit.eq}). This is the content of the next
\begin{theo}\label{Commutator.estimate.theo}
Consider $1\leq p_1<p_2\leq \infty$ such that
$$\frac{1}{p_2}<1-\frac{2\lambda}{N}<\frac{1}{p_1}.$$
Then, there exists some positive constant $C=C(p_1,p_2,\lambda,N)$ such that
\begin{align*}
\Vert ([u,I_{N-2\lambda}]\rho)&\Vert_{L^1(0,T;W^{1,\infty}(\RR^N))}\\
&\leq C\Vert u\Vert_{L^1(0,T;W^{1,\infty}(\RR^N))}\Vert \rho\Vert_{L^\infty(0,T;L^{p_1}(\RR^N))}^{(1/{p_2'}-\frac{2\lambda}{N})/(1/{p_1}-1/{p_2})}\Vert \rho\Vert_{L^\infty(0,T;L^{p_2}(\RR^N))}^{(\frac{2\lambda}{N}-1/{p_1'})/(1/{p_1}-1/{p_2})}.
\end{align*}
for every $\rho\in L^\infty(0,T;L^{p_1}(\RR^N))\cap L^\infty(0,T;L^{p_2}(\RR^N))$ and every $u\in L^1(0,T;W^{1,\infty}(\RR^N))$.
\end{theo}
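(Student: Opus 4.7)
The strategy is to reduce the $W^{1,\infty}$ estimate on the commutator to a single interpolated $L^\infty$ bound for the Riesz potential of $\vert\rho\vert$. Concretely, I first prove the key lemma
\begin{equation*}
\Vert I_{N-2\lambda}\vert\rho\vert\Vert_{L^\infty(\RR^N)}\leq C\,\Vert\rho\Vert_{L^{p_1}(\RR^N)}^{\alpha_1}\,\Vert\rho\Vert_{L^{p_2}(\RR^N)}^{\alpha_2},
\end{equation*}
with $\alpha_1,\alpha_2$ exactly as in the theorem. I prove it by splitting $I_{N-2\lambda}\vert\rho\vert(x)$ at a free radius $R>0$ into a near and a far piece and applying H\"older with $L^{p_2}$ on the near piece and $L^{p_1}$ on the far piece. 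The hypothesis $1/p_2<1-2\lambda/N<1/p_1$ rewrites as $N/p_2'>2\lambda>N/p_1'$ and is precisely what makes the two auxiliary integrals $\int_{\vert y\vert<R}\vert y\vert^{-2\lambda p_2'}\,dy$ and $\int_{\vert y\vert>R}\vert y\vert^{-2\lambda p_1'}\,dy$ convergent. This produces a sum of two powers of $R$, one with positive and one with negative exponent, and optimizing in $R$ (using $1/p-1/q=1/q'-1/p'$ to simplify) yields exactly the exponents $\alpha_1,\alpha_2$ claimed.

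Next, I promote the lemma to the full $W^{1,\infty}$ bound on $[M_u,I_{N-2\lambda}]\rho$. The $L^\infty$ bound is immediate from the identity $[M_u,I_{N-2\lambda}]\rho=u\,I_{N-2\lambda}\rho-I_{N-2\lambda}(u\rho)$ together with the pointwise bound $\vert u\rho\vert\leq\Vert u\Vert_\infty\vert\rho\vert$, which give $\Vert[M_u,I_{N-2\lambda}]\rho\Vert_{L^\infty}\leq 2\Vert u\Vert_\infty\Vert I_{N-2\lambda}\vert\rho\vert\Vert_\infty$. For the gradient I differentiate under the integral (justified by dominated convergence, the Lipschitz regularity of $u$ taming the singularity produced by $\nabla_x\vert x-y\vert^{-2\lambda}$) to obtain
\begin{equation*}
\partial_{x_k}[M_u,I_{N-2\lambda}]\rho(x)=(\partial_{x_k}u)(x)\,I_{N-2\lambda}\rho(x)-2\lambda\int_{\RR^N}\frac{(x_k-y_k)(u(x)-u(y))}{\vert x-y\vert^{2\lambda+2}}\rho(y)\,dy.
\end{equation*}
The first summand is bounded by $\Vert\nabla u\Vert_\infty\Vert I_{N-2\lambda}\vert\rho\vert\Vert_\infty$. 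In the second, I use $\vert x_k-y_k\vert\leq\vert x-y\vert$ together with $\vert u(x)-u(y)\vert\leq\Vert\nabla u\Vert_\infty\vert x-y\vert$ to cancel exactly two powers of $\vert x-y\vert$ in the numerator, so the integrand is pointwise dominated by $\Vert\nabla u\Vert_\infty\vert x-y\vert^{-2\lambda}\vert\rho(y)\vert$; hence the whole summand is controlled by $2\lambda\Vert\nabla u\Vert_\infty\,I_{N-2\lambda}\vert\rho\vert(x)$. Altogether $\Vert\nabla[M_u,I_{N-2\lambda}]\rho\Vert_\infty\leq(1+2\lambda)\Vert\nabla u\Vert_\infty\Vert I_{N-2\lambda}\vert\rho\vert\Vert_\infty$.

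Combining both parts, absorbing $\Vert u\Vert_\infty+\Vert\nabla u\Vert_\infty$ into $\Vert u\Vert_{W^{1,\infty}}$, inserting the key lemma, and integrating in $t\in(0,T)$ yields the stated estimate, since the $L^\infty$-in-$t$ norms of $\Vert\rho(t,\cdot)\Vert_{L^{p_j}}$ factor out of the time integral. The main obstacle I anticipate is the integrability check in the H\"older step: the two exponents on $R$ must have opposite signs, and this is exactly the content of the hypothesis, so any weakening of the bracketing $1/p_2<1-2\lambda/N<1/p_1$ around the critical HLS exponent breaks the argument. The structural point that makes the proof succeed is the cancellation of two powers of $\vert x-y\vert$ in the gradient computation, which prevents the derivative from producing a more singular Riesz potential than $I_{N-2\lambda}$ itself and therefore keeps the $L^\infty$-in-$x$ control with the same exponents $\alpha_1,\alpha_2$.
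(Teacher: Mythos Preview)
Your proof is correct and follows essentially the same approach as the paper's: both bound the $L^\infty$ norm of the commutator by $2\Vert u\Vert_\infty\Vert \phi_0*\vert\rho\vert\Vert_\infty$ and then control the gradient by observing that the Lipschitz bound $\vert u(x)-u(y)\vert\leq\Vert\nabla u\Vert_\infty\vert x-y\vert$ cancels the extra power of $\vert x-y\vert^{-1}$ produced by differentiation, reducing the derivative estimate back to the same Riesz potential $I_{N-2\lambda}\vert\rho\vert$. The only difference is cosmetic: the paper invokes ``the Hardy--Littlewood--Sobolev theorem of fractional integrals'' for the interpolated $L^\infty$ bound on $\phi_0*\rho$, whereas you spell out the standard near/far splitting at a free radius $R$ and optimize, which is precisely how one proves that endpoint estimate.
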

\begin{proof}
Write the commutator of the multiplier $u$ with the Riesz potential in a more explicit way as follows
$$([u,I_{N-2\lambda}]\rho)(t,x)=\int_{\RR^N}\phi_0(\vert x-y\vert)(u(t,x)-u(t,y))\rho(t,y)\,dy.$$
On the one hand, taking $L^\infty$ norms with respect to space yields
$$\Vert ([u,I_{N-2\lambda}]\rho)(t,\cdot)\Vert_{L^\infty(\RR^N)}\leq 2\Vert u(t,\cdot)\Vert_{L^\infty(\RR^N)}\Vert \phi_0*\rho(t,\cdot)\Vert_{L^\infty(\RR^N)}.$$
The assumptions on $\rho$ together with the Hardy--Littlewood--Sobolev theorem of fractional integrals then leads to
\begin{align*}
\Vert ([u,I_{N-2\lambda}]\rho)&\Vert_{L^1(0,T;L^\infty(\RR^N))}\\
&\leq C\Vert u\Vert_{L^1(0,T;L^\infty(\RR^N))}\Vert \rho\Vert_{L^\infty(0,T;L^{p_1}(\RR^N))}^{(1/{p_2'}-\frac{2\lambda}{N})/(1/{p_1}-1/{p_2})}\Vert \rho\Vert_{L^\infty(0,T;L^{p_2}(\RR^N))}^{(\frac{2\lambda}{N}-1/{p_1'})/(1/{p_1}-1/{p_2})}.
\end{align*}
Now, taking derivatives with respect to space in the commutator, we have
$$\frac{\partial}{\partial x_i}[u,I_{N-2\lambda}]\rho=[u,I_{N-(2\lambda+1)}]\rho+\frac{\partial u}{\partial x_i} \phi_0*\rho.$$
Regarding the first term, one can cancel the extra degree of singularity thanks to the Lipschitz continuity with respect to space of $u$, leading to a similar estimate
\begin{align*}
\Vert ([u,I_{N-(2\lambda+1)}]\rho)&\Vert_{L^1(0,T;L^\infty(\RR^N))}\\
&\leq C\Vert u\Vert_{L^1(0,T;W^{1,\infty}(\RR^N))}\Vert \rho\Vert_{L^\infty(0,T;L^{p_1}(\RR^N))}^{(1/{p_2'}-\frac{2\lambda}{N})/(1/{p_1}-1/{p_2})}\Vert \rho\Vert_{L^\infty(0,T;L^{p_2}(\RR^N))}^{(\frac{2\lambda}{N}-1/{p_1'})/(1/{p_1}-1/{p_2})}.
\end{align*}
The second term is easier to estimate in the same manner as follows
\begin{align*}
& \hspace{-1.8cm} \left\Vert\frac{\partial u}{\partial x_i}\phi_0*\rho\right\Vert_{L^1(0,T;L^\infty(\RR^N))}\\
&\leq C\Vert u\Vert_{L^1(0,T;W^{1,\infty}(\RR^N))}\Vert \rho\Vert_{L^\infty(0,T;L^{p_1}(\RR^N))}^{(1/{p_2'}-\frac{2\lambda}{N})/(1/{p_1}-1/{p_2})}\Vert \rho\Vert_{L^\infty(0,T;L^{p_2}(\RR^N))}^{(\frac{2\lambda}{N}-1/{p_1'})/(1/{p_1}-1/{p_2})}.
\end{align*}
Arranging all the terms we are led to the desired estimate.
\end{proof}

Apart from the preceding regularity results, that leads to $W^{1,\infty}$ vector fields $u$, we can indeed obtain some extra integrability result by virtue of the Hardy-Littlewood-Sobolev theorem of fractional integrals. Note that such integrability cannot directly follow from the $W^{1,\infty}$ regularity from the Sobolev embedding theorem.

\begin{theo}\label{Commutator.integrability.estimate.theo}
Consider $1\leq p_1<p_2\leq \infty$ and $1\leq s<\infty$ such that
$$\frac{1}{p_2}<1-\frac{2\lambda}{N}<\frac{1}{p_1}\hspace{0.25cm}\mbox{ and }\hspace{0.25cm}s>\frac{N}{2\lambda}.$$
Then, there exists some positive constant $C=C(p_1,p_2,s,\lambda,N)$ such that
\begin{align*}
\Vert ([u,I_{N-2\lambda}]\rho)&\Vert_{L^1(0,T;L^s(\RR^N))}\\
&\leq C\Vert u\Vert_{L^1(0,T;L^s(\RR^N))}\Vert \rho\Vert_{L^\infty(0,T;L^{p_1}(\RR^N))}^{(1/{p_2'}-\frac{2\lambda}{N})/(1/{p_1}-1/{p_2})}\Vert \rho\Vert_{L^\infty(0,T;L^{p_2}(\RR^N))}^{(\frac{2\lambda}{N}-1/{p_1'})/(1/{p_1}-1/{p_2})}.
\end{align*}
for every $\rho\in L^\infty(0,T;L^{p_1}(\RR^N))\cap L^\infty(0,T;L^{p_2}(\RR^N))$ and every $u\in L^1(0,T;L^s(\RR^N))$.
\end{theo}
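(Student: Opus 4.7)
\smallskip

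\noindent\textbf{Proof proposal.} The plan is to split the commutator into its two natural pieces
$$
([u,I_{N-2\lambda}]\rho)(t,x)=u(t,x)(\phi_0*\rho)(t,x)-\phi_0*(u\rho)(t,x),
$$
(modulo the constant $1/c_\lambda^\lambda$) and to estimate each term in $L^s_x$ separately, integrating in time at the very end thanks to the linear dependence on $u$. Note that this is a different decomposition than the one used in Theorem \ref{Commutator.estimate.theo}, where the Lipschitz regularity of $u$ was crucial; here only $L^s$ integrability of $u$ is available, so the cancellation trick cannot be exploited.

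For the first term, I would estimate by H\"older as
$$
\Vert u(t,\cdot)(\phi_0*\rho)(t,\cdot)\Vert_{L^s(\RR^N)}\leq \Vert u(t,\cdot)\Vert_{L^s(\RR^N)}\Vert (\phi_0*\rho)(t,\cdot)\Vert_{L^\infty(\RR^N)},
$$
and reduce everything to an $L^\infty$ bound on the Riesz-like potential $\phi_0*\rho$. I would obtain it by the classical splitting $\phi_0=\phi_0\mathbf{1}_{B_R}+\phi_0\mathbf{1}_{B_R^c}$, applying H\"older with exponents $(p_2,p_2')$ near the origin and $(p_1,p_1')$ at infinity. The near part converges because $2\lambda p_2'<N\Leftrightarrow 1/p_2<1-2\lambda/N$ and the far part converges because $2\lambda p_1'>N\Leftrightarrow 1/p_1>1-2\lambda/N$, exactly the two hypotheses. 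Optimizing the radius $R$ by balancing
$$
R^{N/p_2'-2\lambda}\Vert \rho\Vert_{L^{p_2}}=R^{N/p_1'-2\lambda}\Vert \rho\Vert_{L^{p_1}}
$$
yields $\Vert \phi_0*\rho\Vert_{L^\infty}\leq C\Vert \rho\Vert_{L^{p_1}}^{a}\Vert \rho\Vert_{L^{p_2}}^{b}$ with precisely the exponents
$$
a=\frac{1/p_2'-2\lambda/N}{1/p_1-1/p_2},\qquad b=\frac{2\lambda/N-1/p_1'}{1/p_1-1/p_2},
$$
which sum to $1$ (this is a direct check).

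For the second term, I would use the Hardy--Littlewood--Sobolev theorem of fractional integrals applied to $I_{N-2\lambda}(u\rho)$. Its hypotheses are met because the assumption $s>N/(2\lambda)$ ensures $1/q:=1/s+1-2\lambda/N\in(0,1)$ and $q<s$, so
$$
\Vert \phi_0*(u\rho)(t,\cdot)\Vert_{L^s(\RR^N)}\leq C\Vert (u\rho)(t,\cdot)\Vert_{L^q(\RR^N)}\leq C\Vert u(t,\cdot)\Vert_{L^s(\RR^N)}\Vert \rho(t,\cdot)\Vert_{L^r(\RR^N)},
$$
where $1/r=1/q-1/s=1-2\lambda/N$. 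By hypothesis $1/p_2<1/r<1/p_1$, so the standard log-convexity interpolation inequality gives $\Vert \rho\Vert_{L^r}\leq \Vert \rho\Vert_{L^{p_1}}^a\Vert \rho\Vert_{L^{p_2}}^b$ with the same exponents as above (a quick check that $1/r=a/p_1+b/p_2$ confirms this). Summing the two bounds and integrating in time, using that $u\in L^1(0,T;L^s(\RR^N))$ while the $L^{p_i}$ norms of $\rho$ are merely bounded in time, yields the desired inequality.

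The only delicate point is the verification that the HLS exponents fall within the admissible range; the condition $s>N/(2\lambda)$ is tailored precisely to this, and prevents the endpoint failure of HLS when $q=1$. The rest of the argument is essentially a careful bookkeeping of exponents, and the exponents $a,b$ arise identically from both pieces of the commutator, which explains the clean form of the final estimate.
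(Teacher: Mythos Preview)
Your proof is correct and follows essentially the same route as the paper: the same two-term splitting $[u,I_{N-2\lambda}]\rho=u(\phi_0*\rho)-\phi_0*(u\rho)$, the $L^\infty$ bound on $\phi_0*\rho$ for the first piece, and Hardy--Littlewood--Sobolev combined with interpolation of $\rho$ at the exponent $r$ with $1/r=1-2\lambda/N$ for the second. Your near/far splitting with optimized radius $R$ for the $L^\infty$ bound is in fact a more explicit justification than the paper's appeal to ``the above reasoning,'' but the underlying estimate and the resulting exponents $a,b$ are identical.
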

\begin{proof}
First, split the commutator into two parts as follows
$$[u,I_{N-2\lambda}]\rho=F+G,$$
where the functions $F$ and $G$ takes the form
$$F=(\phi_0*\rho)u,\hspace{0.25cm}\mbox{ and }\hspace{0.25cm}G=\phi_0*(\rho u).$$
Note that one can apply the above reasoning to estimate $F$ in any Lebesgue space as follows
$$\Vert F\Vert_{L^1(0,T;L^s(\RR^N))}\leq C\Vert u\Vert_{L^1(0,T;L^s(\RR^N))}\Vert \rho\Vert_{L^\infty(0,T;L^{p_1}(\RR^N))}^{(1/{p_2'}-\frac{2\lambda}{N})/(1/{p_1}-1/{p_2})}\Vert \rho\Vert_{L^\infty(0,T;L^{p_2}(\RR^N))}^{(\frac{2\lambda}{N}-1/{p_1'})/(1/{p_1}-1/{p_2})}.$$
Regarding the second term, define the exponent 
$$p:=\frac{N}{N-2\lambda},$$
and note that our hypothesis leads to $p_1<p<p_2$. Then, the interpolation inequality of the Lebesgue spaces shows that $\rho\in L^\infty(0,T;L^p(\RR^N))$ and
$$\Vert \rho\Vert_{L^\infty(0,T;L^p(\RR^N))}\leq \Vert \rho\Vert_{L^\infty(0,T;L^{p_1}(\RR^N))}^\theta\Vert \rho\Vert_{L^\infty(0,T;L^{p_2}(\RR^N))}^{1-\theta},$$
for some exponent $\theta\in (0,1)$ given by
$$\frac{1}{p}=\frac{\theta}{p_1}+\frac{(1-\theta)}{p_2}.$$
By inspection, it is straightforward to check that
$$\theta=\frac{\frac{1}{p_2'}-\frac{2\lambda}{N}}{\frac{1}{p_1}-\frac{1}{p_2}}.$$
Hence,
$$\Vert \rho\Vert_{L^\infty(0,T;L^p(\RR^N))}\leq \Vert \rho\Vert_{L^\infty(0,T;L^{p_1}(\RR^N))}^{(1/{p_2'}-\frac{2\lambda}{N})/(1/{p_1}-1/{p_2})}\Vert \rho\Vert_{L^\infty(0,T;L^{p_2}(\RR^N))}^{(\frac{2\lambda}{N}-1/{p_1'})/(1/{p_1}-1/{p_2})}.$$
Now, the H\"{o}lder inequality shows that $\rho\,u\in L^1(0,T;L^r(\RR^N))$, where
$$\frac{1}{r}=\frac{1}{p}+\frac{1}{s}.$$
Since we are assuming $s>N/2\lambda$, then $r>1$. Moreover, the next identity holds
$$\frac{1}{s}=\frac{1}{r}+\frac{2\lambda}{N}-1.$$
Consequently, the Hardy--Littlewood--Sobolev inequality entails
\begin{align*}
\Vert G\Vert_{L^1(0,T;L^s(\RR^N))}&\leq C\Vert \rho u\Vert_{L^1(0,T;L^r(\RR^N))}\\
&\leq C\Vert u\Vert_{L^1(0,T;L^s(\RR^N))}\Vert \rho\Vert_{L^\infty(0,T;L^p(\RR^N))}\\
&\leq C\Vert u\Vert_{L^1(0,T;L^s(\RR^N))}\Vert \rho\Vert_{L^\infty(0,T;L^{p_1}(\RR^N))}^{(1/{p_2'}-\frac{2\lambda}{N})/(1/{p_1}-1/{p_2})}\Vert \rho\Vert_{L^\infty(0,T;L^{p_2}(\RR^N))}^{(\frac{2\lambda}{N}-1/{p_1'})/(1/{p_1}-1/{p_2})},
\end{align*}
and this end the proof of the theorem.
\end{proof}

In the next part, some particular choices of $s$ will be needed. Specifically, $s$ will equal some entire multiples of the upper and lower exponents $p_1$ and $p_2$ of $\rho$.

\begin{cor}\label{Commutator.integrability.estimate.cor}
Let $\lambda$ be any exponent in $(0,N/2)$ and $1\leq p_1<p_2\leq \infty$ such that
$$\frac{1}{p_2}<1-\frac{2\lambda}{N}<\frac{1}{p_1}.$$
Consider any (large enough) positive integer $k$ so that
$$k>\max\left\{\frac{N}{2\lambda}-1,\frac{N}{2\lambda p_1}\right\},$$
Then, there exists some positive constant $C=C(p_1,p_2,s,\lambda,N)$ such that
\begin{align*}
\Vert ([u,I_{N-2\lambda}]\rho)&\Vert_{L^1(0,T;L^{kp_i}(\RR^N))}\\
&\leq C\Vert u\Vert_{L^1(0,T;L^{kp_i}(\RR^N))}\Vert \rho\Vert_{L^\infty(0,T;L^{p_1}(\RR^N))}^{(1/{p_2'}-\frac{2\lambda}{N})/(1/{p_1}-1/{p_2})}\Vert \rho\Vert_{L^\infty(0,T;L^{p_2}(\RR^N))}^{(\frac{2\lambda}{N}-1/{p_1'})/(1/{p_1}-1/{p_2})}.
\end{align*}
for every $i\in \{1,2\}$ and each $\rho\in L^\infty(0,T;L^{p_1}(\RR^N))\cap L^\infty(0,T;L^{p_2}(\RR^N))$, $u\in L^1(0,T;L^{kp_1}(\RR^N))\cap L^1(0,T;L^{kp_2}(\RR^N))$.
\end{cor}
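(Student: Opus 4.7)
The plan is to deduce this corollary as an immediate specialization of Theorem \ref{Commutator.integrability.estimate.theo}, applying it twice: once with $s=kp_1$ and once with $s=kp_2$. Since the interpolation exponents on $\|\rho\|_{L^{p_1}}$ and $\|\rho\|_{L^{p_2}}$ arising in the stated bound are independent of $s$ (they come only from the exponent $p=\frac{N}{N-2\lambda}$ determined by the Hardy--Littlewood--Sobolev step in the proof of Theorem \ref{Commutator.integrability.estimate.theo}), the conclusion transfers verbatim once one checks the structural hypotheses $1\leq p_1<p_2\leq\infty$ with $\tfrac{1}{p_2}<1-\tfrac{2\lambda}{N}<\tfrac{1}{p_1}$ (which are already part of the corollary's assumptions) together with the admissibility threshold $s>\tfrac{N}{2\lambda}$ for each chosen value of $s$.

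The only genuine verification step is this threshold condition. For $i=1$, the hypothesis $k>\tfrac{N}{2\lambda p_1}$ imposed on the integer $k$ gives directly $kp_1>\tfrac{N}{2\lambda}$, so Theorem \ref{Commutator.integrability.estimate.theo} applies with $s=kp_1$. For $i=2$, since $p_2>p_1$, one has $kp_2>kp_1>\tfrac{N}{2\lambda}$ for free, and no additional constraint on $k$ is required. (The second lower bound $k>\tfrac{N}{2\lambda}-1$ that appears in the hypothesis is carried over from the setting of Theorem \ref{Euler.limit.eq.existence.intro.theo}, where it is used to combine the present integrability bound with the $W^{1,\infty}$ commutator estimate of Theorem \ref{Commutator.estimate.theo}; it is not actually needed in the statement of the corollary itself.)

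Combining both applications yields the advertised bound with the same multiplicative constant (up to relabeling), the same interpolation exponents, and the two Lebesgue norms $\|u\|_{L^1(0,T;L^{kp_i}(\mathbb{R}^N))}$ on the right-hand side. I do not foresee any real obstacle: the corollary is essentially bookkeeping on top of Theorem \ref{Commutator.integrability.estimate.theo}, and its sole purpose is to package the estimate at the specific Lebesgue exponents $kp_1$ and $kp_2$ that match the target space $W^{1,kp_1,kp_2}(\mathbb{R}^N,\mathbb{R}^N)$ appearing in the fixed-point scheme of Theorem \ref{Euler.limit.eq.existence.intro.theo}.
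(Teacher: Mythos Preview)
Your proposal is correct and matches the paper's intent: the corollary is stated without proof immediately after Theorem~\ref{Commutator.integrability.estimate.theo}, and is indeed nothing more than two direct applications of that theorem with $s=kp_1$ and $s=kp_2$, the only nontrivial check being the threshold $s>N/(2\lambda)$ that you verify. Your observation that the extra hypothesis $k>\tfrac{N}{2\lambda}-1$ plays no role in the corollary itself is also accurate.
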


All the above results amounts to the necessary tools that we will need to construct a solution to (\ref{Euler.limit.eq}) by the Banach contraction principle in the space $W^{1,kp_1,kp_2}$ (see (\ref{normed.space.W.form}) for the definition of such Banach space). This is the content of Theorem \ref{Euler.limit.eq.existence.intro.theo} in the Introduction that we prove next.

\begin{proof}[Proof of Theorem \ref{Euler.limit.eq.existence.intro.theo}]

Let us define two operators $\mathcal{D}$ and $\mathcal{C}$ given by:
\begin{align*}
\mathcal{D}[u]&:=\left.\left(\frac{\rho^0(y)}{J^u(t;0,y)}\right)\right\vert_{y=X^u(0;t,\cdot)},\\
\mathcal{C}[\rho,u]&:=-\frac{1}{c_\lambda^\lambda}[u,I_{N-2\lambda}]\rho,
\end{align*}
for every $u\in L^1(0,T;W^{1,kp_1,kp_2}(\RR^N,\RR^N))$ and $\rho\in L^\infty(0,T;L^{p_1}(\RR^N)\cap L^{p_2}(\RR^N))$, i.e., $\mathcal{D}[u]$ is the solution to the Cauchy problem obtained by means of Theorem \ref{Scalar.conservation.law.theo} and $\mathcal{C}[\rho,u]$ is the commutator. Note that the existence and uniqueness of solution to (\ref{Euler.limit.eq}) amounts to finding a solution to the equation
$$\mathcal{C}[\mathcal{D}[u],u]-\nabla\psi=u,\hspace{0.25cm}u\in L^1(0,T;W^{1,kp_1,kp_2}(\RR^N,\RR^N)).$$
Naturally, it can be restated as a fixed point equation
$$\Phi[u]=u, \hspace{0.25cm}u\in L^1(0,T;W^{1,kp_1,kp_2}(\RR^N,\RR^N)),$$
for the operator $\Phi[u]=\mathcal{C}[\mathcal{D}[u],u]-\nabla\psi$. First, notice that such fixed point problem is well posed because 
$$\Phi(L^1(0,T;W^{1,kp_1,kp_2}(\RR^N,\RR^N)))\subseteq L^1(0,T;W^{1,kp_1,kp_2}(\RR^N,\RR^N)),$$
as a consequence of Theorem \ref{Scalar.conservation.law.theo} and Corollary \ref{Commutator.integrability.estimate.cor}. Indeed,
\begin{align*}
\Vert \mathcal{D}[u]\Vert_{L^\infty(0,T;L^{p_i}(\RR^N))}&\leq \exp\left(\frac{1}{p_i'}\Vert u\Vert_{L^1(0,T;W^{1,\infty}(\RR^N))}\right)\Vert \rho^0\Vert_{L^{p_i}(\RR^N)},\\
\Vert \mathcal{C}[\rho,u]\Vert_{L^1(0,T;W^{1,kp_1,kp_2}(\RR^N,\RR^N))}&\leq C\Vert u\Vert_{L^1(0,T;W^{1,kp_1,kp_2}(\RR^N,\RR^N))}\\
&\hspace{1.5cm}\times\Vert \rho\Vert_{L^\infty(0,T;L^{p_1}(\RR^N))}^{(1/{p_2'}-\frac{2\lambda}{N})/(1/{p_1}-1/{p_2})}\Vert \rho\Vert_{L^\infty(0,T;L^{p_2}(\RR^N))}^{(\frac{2\lambda}{N}-1/{p_1'})/(1/{p_1}-1/{p_2})}.
\end{align*}
Let us show that it is indeed a contraction. To this end, consider $u_1,u_2\in L^1(0,T;W^{1,kp_1,kp_2}(\RR^N,\RR^N))$ and split
$$\Phi[u_1]-\Phi[u_2]=\mathcal{C}[\mathcal{D}[u_1],u_1-u_2]+\mathcal{C}[\mathcal{D}[u_1]-\mathcal{D}[u_2],u_2]=:U+V.$$
The first term can be bounded as follows
\begin{multline*}
\Vert U\Vert_{L^1(0,T;W^{1,kp_1,kp_2}(\RR^N,\RR^N))}\leq C\Vert u_1-u_2\Vert_{L^1(0,T;W^{1,kp_1,kp_2}(\RR^N,\RR^N))}\\
\times\exp\left(\frac{2\lambda}{N}\Vert u_1\Vert_{L^1(0,T;W^{1,\infty}(\RR^N))}\right)\Vert \rho^0\Vert_{L^{p_1}(\RR^N)}^{(1/{p_2'}-\frac{2\lambda}{N})/(1/{p_1}-1/{p_2})}\Vert \rho^0\Vert_{L^{p_2}(\RR^N)}^{(\frac{2\lambda}{N}-1/{p_1'})/(1/{p_1}-1/{p_2})}.
\end{multline*}
Regarding the second term one arrives at the next slightly different estimate
\begin{multline*}
\Vert V\Vert_{L^1(0,T;W^{1,kp_1,kp_2}(\RR^N,\RR^N))}\leq C\Vert u_2\Vert_{L^1(0,T;W^{1,kp_1,kp_2}(\RR^N,\RR^N))}\\
\times \Vert \mathcal{D}[u_1]-\mathcal{D}[u_2]\Vert_{L^\infty(0,T;L^{p_1}(\RR^N))}^{(1/{p_2'}-\frac{2\lambda}{N})/(1/{p_1}-1/{p_2})}\Vert \mathcal{D}[u_1]-\mathcal{D}[u_2]\Vert_{L^\infty(0,T;L^{p_2}(\RR^N))}^{(\frac{2\lambda}{N}-1/{p_1'})/(1/{p_1}-1/{p_2})}.
\end{multline*}
The next step is to estimate $\mathcal{D}[u_1]-\mathcal{D}[u_2]$. To this end, the $L^{kp_i}$ estimate of the commutator in Theorem \ref{Commutator.estimate.theo} and Corollary \ref{Commutator.integrability.estimate.cor} will be essential. As it will be checked, it will not not directly follow from the $W^{1,\infty}$ estimate in Theorem \ref{Commutator.estimate.theo}. Since we are assuming smooth $\rho^0$, there is no problem in considering the pull-back pointwise. Thus,
\begin{multline*}
\mathcal{D}[u_1](t,x)-\mathcal{D}[u_2](t,x)\\
=\rho^0(X^{u_1}(0;t,x))(J^{u_1}(0;t,x)-J^{u_2}(0;t,x))+(\rho^0(X^{u_1}(0;t,x))-\rho^0(X^{u_2}(0;t,x)))J^{u_2}(0;t,x).
\end{multline*}
On the one hand, the first term can easily bounded by virtue of a straightforward change of variables, the Jacobi--Louville formula and the mean value theorem
\begin{align*}
\Vert \rho^0(X^{u_1}(0;t,\cdot))&(J^{u_1}(0;t,\cdot)-J^{u_2}(0;t,\cdot))\Vert_{L^{p_i}(\RR^N)}\\
&\leq \Vert \rho^0(X^{u_1}(0;t,\cdot))\Vert_{L^{p_i}(\RR^N)}\Vert J^{u_1}(0;t,\cdot)-J^{u_2}(0;t,\cdot)\Vert_{L^\infty(\RR^N)}\\
&\leq\Vert\rho^0\Vert_{L^{p_i}(\RR^N)}\exp\left(\frac{1}{p_i}\Vert u_1\Vert_{L^1(0,T;W^{1,\infty}(\RR^N))}\right)\\
&\hspace{1.3cm}\times\exp\left(\Vert u_1\Vert_{L^1(0,T;W^{1,\infty}(\RR^N))}+\Vert u_2\Vert_{L^1(0,T;W^{1,\infty}(\RR^N))}\right)\Vert u_1-u_2\Vert_{L^1(0,T;W^{1,\infty}(\RR^N))}.
\end{align*}
On the other hand, the second term has to be studied separately as follows. First, the Jacobian determinant can be bounded in $L^\infty$ as usual through (\ref{J.upper.lower.estimate.ineq}). Second, the difference of the evaluation of $\rho^0$ along the flow of $u_1$ and $u_2$ can be split by means of the integral remainder version of Taylor's theorem, specifically
\begin{align*}
\rho^0(X^{u_1}&(0;t,x))-\rho^0(X^{u_2}(0;t,x))\\
=&\sum_{0<\vert \gamma\vert\leq k-1}\frac{1}{\vert \gamma\vert !}D^\gamma\rho^0(X^{u_2}(0;t,x))\,(X^{u_1}(0;t,x)-X^{u_2}(0;t,x))^\gamma\\
&+\sum_{\vert\gamma\vert= k}\left(\int_0^1\frac{(1-\theta)^k}{k!}D^\gamma\rho^0(X^{u_2}(0;t,x)+\theta(X^{u_1}(0;t,x)-X^{u_2}(0;t,x)))\,d\theta\right)\\
&\hspace{8cm}\times (X^{u_1}(0;t,x)-X^{u_2}(0;t,x))^\gamma.
\end{align*}
As a consequence, one can obtain the next bound
\begin{align*}
\Vert (&\rho^0(X^{u_1}(0;t,\cdot))-\rho^0(X^{u_2}(0;t,\cdot)))J^{u_2}(0;t,\cdot)\Vert_{L^{p_i}(\RR^N)}\\
&\leq\sum_{0<\vert \gamma\vert\leq k-1}\frac{1}{\vert \gamma\vert !}\Vert D^\gamma\rho^0\Vert_{L^{p_i}(\RR^N)}\exp\left(\frac{1}{p_i}\Vert u_2\Vert_{L^1(0;T;W^{1,\infty}(\RR^N))}\right)\Vert X^{u_1}(0;t,\cdot)-X^{u_2}(0;t,\cdot)\Vert_{L^\infty(\RR^N)}^{\vert \gamma\vert}\\
&+\frac{1}{(k+1)!}\sum_{\vert \gamma\vert=k}\Vert D^\gamma\rho^0\Vert_{L^\infty(\RR^N)}\exp\left(\frac{1}{p_i}\Vert u_2\Vert_{L^1(0;T;W^{1,\infty}(\RR^N))}\right)\Vert X^{u_1}(0;t,\cdot)-X^{u_2}(0;t,\cdot)\Vert_{L^{kp_i}(\RR^N)}^k
\end{align*}

Since $u_i(t,\cdot)$ belongs both to $W^{1,\infty}(\RR^N)$ and $L^{kp_i}$, then a straightforward application of Gronwall's Lemma yields the next upper bounds
\begin{align*}
\Vert X^{u_1}(0;t,\cdot)-&X^{u_2}(0;t,\cdot)\Vert_{L^\infty(\RR^N)}\\
&\leq \exp\left(\Vert u_1\Vert_{L^1(0,T;W^{1,\infty}(\RR^N))}\right)\Vert u_1-u_2\Vert_{L^1(0,T;L^\infty(\RR^N))},\\
\Vert X^{u_1}(0;t,\cdot)-&X^{u_2}(0;t,\cdot)\Vert_{L^{kp_i}(\RR^N)}\\
&\leq \exp\left(\frac{1}{kp_i}\Vert u_1\Vert_{L^1(0,T;W^{1,\infty}(\RR^N))}\right)\exp(\Vert u_2\Vert_{L^1(0,T;W^{1,\infty}(\RR^N))})\Vert u_1-u_2\Vert_{L^1(0,T;L^{kp_i}(\RR^N))}.
\end{align*}
To sum up, there exists some separately increasing function $\kappa_2:\RR_0^+\times\RR_0^+\longrightarrow\RR_0^+$ that does not depend on $u_1$, $u_2$ or $\rho^0$ such that
\begin{multline*}
\Vert \Phi[u_1]-\Phi[u_2]\Vert_{L^1(0,T;W^{1,kp_1,kp_2}(\RR^N,\RR^N))}\leq \Vert \rho^0\Vert_{W^{k,p_1,p_2}(\RR^N)}\\
\times\kappa_2(\Vert u_1\Vert_{L^1(0,T;W^{1,kp_1,kp_2}(\RR^N,\RR^N))},\Vert u_2\Vert_{L^1(0,T;W^{1,kp_1,kp_2}(\RR^N,\RR^N))})\Vert u_1-u_2\Vert_{L^1(0,T;W^{1,kp_1,kp_2}(\RR^N,\RR^N))}.
\end{multline*}
Similarly, one obtains the next estimate of $\Phi[u]$
\begin{multline*}
\Vert \Phi[u]\Vert_{L^1(0,T;W^{1,kp_1,kp_2}(\RR^N,\RR^N))}\\
\leq \Vert \rho^0\Vert_{L^{p_1}(\RR^N)}^{(1/{p_2'}-\frac{2\lambda}{N})/(1/{p_1}-1/{p_2})}\Vert \rho^0\Vert_{L^{p_2}(\RR^N)}^{(\frac{2\lambda}{N}-1/{p_1'})/(1/{p_1}-1/{p_2})}\kappa_1(\Vert u\Vert_{L^1(0,T;W^{1,kp_1,kp_2}(\RR^N,\RR^N))})\\
+\Vert \nabla\psi\Vert_{L^1(0,T;W^{1,kp_1,kp_2}(\RR^N,\RR^N))},
\end{multline*}
for some increasing function $\kappa_1:\RR_0^+\longrightarrow\RR_0^+$ which does not depend on $u$ or $\rho^0$. Consider any radius $R>\Vert \nabla\psi\Vert_{L^1(0,T;W^{1,kp_1,kp_2}(\RR^N,\RR^N))}$ and define the unit ball of $L^1(0,T;W^{1,kp_1,kp_2}(\RR^N,\RR^N))$ centered at the origin
$$\mathcal{B}_R:=\{u\in L^1(0,T;W^{1,kp_1,kp_2}(\RR^N,\RR^N)):\,\Vert u\Vert_{L^1(0,T;W^{1,kp_1,kp_2}(\RR^N,\RR^N))}\leq R\}.$$
Assume that $\rho^0$ is ``small enough'' so that
\begin{align*}
\Vert \rho^0\Vert_{L^{p_1}(\RR^N)}^{(1/{p_2'}-\frac{2\lambda}{N})/(1/{p_1}-1/{p_2})}\Vert \rho^0\Vert_{L^{p_2}(\RR^N)}^{(\frac{2\lambda}{N}-1/{p_1'})/(1/{p_1}-1/{p_2})}\kappa_1(R)&\leq R-\Vert \nabla\psi\Vert_{L^1(0,T;W^{1,kp_1,kp_2}(\RR^N,\RR^N))},\\
\Vert \rho^0\Vert_{W^{k,p_1,p_2}(\RR^N)}\kappa_2(R,R)&<1.
\end{align*}
Then, $\Phi(\mathcal{B}_R)\subseteq \mathcal{B}_R$, $\Phi$ is a contraction for the norm $\Vert \cdot\Vert_{L^1(0,T;W^{1,kp_1,kp_2}(\RR^N,\RR^N))}$ and the Banach contraction principle shows the existence of a unique solution $u$ to (\ref{Euler.limit.eq}) in $\mathcal{B}_R$.
\end{proof}

\begin{rem}
Let us note that the above Theorem does not only provides us with a local existence result in $L^1(0,T;W^{1,kp_1,kp_2}(\RR^N))^N$. Indeed, since $T$ is allowed to take the value $T=+\infty$ in the preceding proof, then a global existence result holds for $L^1(0,+\infty;W^{1,kp_1,kp_2}(\RR^N))^N$ velocity fields. Regarding $L^q(0,T;W^{1,kp_1,kp_2}(\RR^N))^N$ velocity fields with $q>1$ (e.g., $q=\infty$), the preceding estimates in Theorems \ref{Commutator.estimate.theo} and \ref{Commutator.integrability.estimate.theo} can be extended to such framework through similar ideas. Specifically,
\begin{align*}
\Vert \mathcal{D}[u]\Vert_{L^\infty(0,T;L^{p_i}(\RR^N))}&\leq \exp\left(\frac{T^{1/q'}}{p_i'}\Vert u\Vert_{L^q(0,T;W^{1,\infty}(\RR^N))}\right)\Vert \rho^0\Vert_{L^{p_i}(\RR^N)},\\
\Vert \mathcal{C}[\rho,u]\Vert_{L^q(0,T;W^{1,kp_1,kp_2}(\RR^N,\RR^N))}&\leq C\Vert u\Vert_{L^q(0,T;W^{1,kp_1,kp_2}(\RR^N,\RR^N))}\\
&\hspace{1.3cm}\times\Vert \rho\Vert_{L^\infty(0,T;L^{p_1}(\RR^N))}^{(1/{p_2'}-\frac{2\lambda}{N})/(1/{p_1}-1/{p_2})}\Vert \rho\Vert_{L^\infty(0,T;L^{p_2}(\RR^N))}^{(\frac{2\lambda}{N}-1/{p_1'})/(1/{p_1}-1/{p_2})}.
\end{align*}
Consequently, a local in time existence result for (\ref{Euler.limit.eq}) with $L^q(0,T;W^{1,kp_1,kp_2}(\RR^N))$ velocity fields holds for small enough initial datum $\rho^0$ (compared to $T$ and the external force $-\nabla\psi$).
\end{rem}

\section{Other relevant hydrodynamic limits}\label{Section-OtherLimits}

This section focuses on showing that the techniques in Section \ref{Section-Hyperbolic} remains valid for other relevant scalings of the system. Specifically, regarding the frictional case we will be concerned with an intermediate scaling (see Equation (\ref{VPFPCS-dimensionless-semihyperbolicscaling-appendix.form}) in Appendix \ref{Appendix-Scaling}). Note that this scaled system includes the velocity diffusion at a low order of $\varepsilon$. Hence, we cannot expect the system to converge to a Maxwellian as it was expected in the hyperbolic scaling in the preceding section. Nevertheless, such scaling in the diffusion term is compulsory in order to get some estimate on the scaled current (coming from the friction term) that allows passing to the limit $\varepsilon\rightarrow 0$. In the frictionless case, we will introduce another hyperbolic scaling (see Equation (\ref{VPFPCS-dimensionless-hyperbolicscaling-frictionless-appendix.form}) in Appendix \ref{Appendix-Scaling}) where the velocity diffusion is again of a low order of $\varepsilon$ and similar a priori bounds for the current can be obtained from the inertial terms, not from the friction term. 

The final goal of such section will be to address the well known Rayleigh--Helmholtz friction, that has been considered of great help in the modeling of flocking, swarming and self-propelling phenomena through the recent years \cite{CarrilloFornasierToscaniVecil,Romanczuk}. Specifically, we will introduce a new hydrodynamic limit where not only hyperbolic, intermediate scalings and singular influence functions can be considered, but also the Rayleigh--Helmholtz friction can be assumed to approach the classical linear friction when $\varepsilon\rightarrow 0$.

\subsection{Hydrodynamic limit with intermediate scalings in the frictional case}\label{Subsection-SemihyperbolicFrictional}
The intermediate hyperbolic scaling (\ref{VPFPCS-dimensionless-semihyperbolicscaling-appendix.form}) was derived in \ref{Appendix-Scaling} and takes the form
\begin{equation}\label{VPFPCS-dimensionless-semihyperbolicscaling.form}
\varepsilon^{1+\gamma}\frac{\partial f_\varepsilon}{\partial t}+\varepsilon v\cdot\nabla_x f_\varepsilon-\varepsilon^\gamma\nabla_x\psi_\varepsilon\cdot \nabla_v f_\varepsilon=\divop_v\left(f_\varepsilon v+\varepsilon^{2\gamma}\nabla_v f_\varepsilon+Q_{CS}^{\phi_\varepsilon}(f_\varepsilon,f_\varepsilon)\right),
\end{equation}
for some parameter $\gamma\in [0,1]$. Note that when $\gamma=0$, it agrees with the hyperbolic scaling in Section \ref{Section-Hyperbolic} and the choice $\gamma=1$ reminds us of a parabolic scaling. In this case, the hierarchy of velocity moments now takes the from
\subsubsection*{Mass conservation}
\begin{equation}\label{MomentumEquation-mass2.form}
\frac{\partial \rho_\varepsilon}{\partial t}+\divop_x \left(\frac{j_\varepsilon}{\varepsilon^\gamma}\right)=0.
\end{equation}
\subsubsection*{Current balance}
\begin{equation}\label{MomentumEquation-current2.form}
\varepsilon^{1+\gamma}\frac{\partial j_\varepsilon}{\partial t}+\varepsilon\divop_x \mathcal{S}_\varepsilon+\varepsilon^\gamma\rho_\varepsilon\nabla_x\psi_\varepsilon+\left(1+\phi_\varepsilon*\rho_\varepsilon\right)j_\varepsilon-\left(\phi_\varepsilon*j_\varepsilon\right)\rho_\varepsilon=0.
\end{equation}
\subsubsection*{Stress tensor balance}
\begin{equation}\label{MomentumEquation-energy2.form}
\varepsilon^{1+\gamma}\frac{\partial \mathcal{S_\varepsilon}}{\partial t}+\varepsilon\divop_x\mathcal{T}_\varepsilon+2\varepsilon^\gamma\Sym(j_\varepsilon\otimes \nabla_x\psi_\varepsilon)+2\left(\left(1+\phi_\varepsilon*\rho_\varepsilon\right)\mathcal{S}_\varepsilon-\varepsilon^{2\gamma}\rho_\varepsilon I\right)-2\Sym((\phi_\varepsilon*j_\varepsilon)\otimes j_\varepsilon)=0.
\end{equation}

Thanks to our choice for the scaling of the velocity diffusion term, one arrives at an analogue of Corollary \ref{Moments_bounds.cor}. Specifically,

\begin{cor}\label{Moments_bounds.cor2}
Let the initial distribution functions $f_\varepsilon^0$ verify (\ref{hypothesis.initial.data.f.form}), the external forces $-\nabla\psi_\varepsilon$ fulfil (\ref{hypothesis.initial.data.F.form}) and consider the strong global in time solution $f_\varepsilon$ to (\ref{VPFPCS-dimensionless-semihyperbolicscaling.form}) with initial data $f_\varepsilon^0$ and $\lambda\in (0,N/2)$. Then, for any nonnegative integer $k$ the next bound holds true
\begin{multline*}
k\left\Vert \vert v\vert^k \frac{f_\varepsilon}{\varepsilon^{2\gamma}}\right\Vert_{L^1(0,T;L^1(\RR^{2N}))}\\
+\frac{k}{2}\frac{1}{\varepsilon^{2\gamma}}\int_0^T\int_{\RR^{4N}}(\vert v\vert^{k-2}v-\vert w\vert^{k-2}w)\cdot (v-w)\phi_\varepsilon(\vert x-y\vert)f_\varepsilon(t,x,v)f_\varepsilon(t,y,w)\,dx\,dt\,dv\,dw\,dt\\
\leq \varepsilon^{1-\gamma}\Vert\vert v\vert^k f_\varepsilon(0)\Vert_{L^1(\RR^{2N})}+k(N+k-2)\Vert \vert v\vert^{k-2}f_\varepsilon\Vert_{L^1(0,T;L^1(\RR^{2N}))}+k\left\Vert \vert v\vert^{k-1}\frac{f_\varepsilon}{\varepsilon^\gamma} \nabla_x\psi_\varepsilon\right\Vert_{L^1(0,T;L^1(\RR^{2N}))}.
\end{multline*}
\end{cor}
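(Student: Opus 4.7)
The plan is to mimic the proof of Corollary \ref{Moments_bounds.cor} but tracking carefully the extra powers of $\varepsilon$ introduced by the intermediate scaling. The first step is to derive the analogue of Proposition \ref{Momenta_equations.pro} for (\ref{VPFPCS-dimensionless-semihyperbolicscaling.form}): multiply the equation by $\vert v\vert^k$ and integrate over $\RR^{2N}_{x,v}$, using the fall-off at infinity of $f_\varepsilon$ that is inherited from the smoothness assumption. The transport term $\varepsilon v\cdot\nabla_x f_\varepsilon$ integrates to zero by the divergence theorem in $x$, while the force term yields $\varepsilon^{\gamma}k\int |v|^{k-2}v\cdot\nabla_x\psi_\varepsilon f_\varepsilon$ after one integration by parts in $v$. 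For the right-hand side, a single integration by parts in $v$ produces
\[
-k\int|v|^k f_\varepsilon\,dx\,dv+k(N+k-2)\varepsilon^{2\gamma}\int|v|^{k-2}f_\varepsilon\,dx\,dv-k\int|v|^{k-1}\Big((\phi_\varepsilon*\rho_\varepsilon)|v|-(\phi_\varepsilon*j_\varepsilon)\cdot\tfrac{v}{|v|}\Big)f_\varepsilon\,dx\,dv,
\]
where one uses $\Delta_v|v|^k=k(N+k-2)|v|^{k-2}$ for the diffusion contribution and the explicit form of $Q_{CS}^{\phi_\varepsilon}(f_\varepsilon,f_\varepsilon)$ for the alignment term.

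Next I divide the resulting identity by $\varepsilon^{2\gamma}$, which puts the time derivative prefactor at $\varepsilon^{1-\gamma}$, the friction contribution at $\varepsilon^{-2\gamma}$, the force contribution at $\varepsilon^{-\gamma}$, and leaves the diffusion contribution $\varepsilon$-independent, exactly as required by the inequality. Integrating in time over $[0,T]$, the Barrow-type endpoint at $t=T$ is dropped as it is nonnegative (since $f_\varepsilon\geq 0$ and $\vert v\vert^k\geq 0$), retaining only the $\varepsilon^{1-\gamma}\Vert |v|^k f_\varepsilon(0)\Vert_{L^1}$ initial term.

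Finally, the nonlinear contribution arising from the alignment operator is symmetrized via the change of variables $(x,v)\leftrightarrow(y,w)$, using the evenness of $\phi_\varepsilon(|x-y|)$: writing the kernel as $\phi_\varepsilon(|x-y|)f_\varepsilon(x,v)f_\varepsilon(y,w)$ and interchanging the roles of the two variables produces exactly $\tfrac{1}{2}(|v|^{k-2}v-|w|^{k-2}w)\cdot(v-w)$ as the bilinear factor, which recovers the second term on the left-hand side of the stated inequality. No step presents any serious obstacle; the whole argument is a careful bookkeeping of the new $\varepsilon$-prefactors in the intermediate scaling, and the symmetrization trick already used in Corollary \ref{Moments_bounds.cor} carries over unchanged because neither the Cucker--Smale operator nor its symmetries are affected by the extra powers of $\varepsilon$.
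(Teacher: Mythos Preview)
Your proposal is correct and follows exactly the approach the paper intends: the paper does not prove this corollary explicitly but simply states that it is the analogue of Corollary~\ref{Moments_bounds.cor} for the intermediate scaling, and what you have written is precisely that analogue spelled out---multiply (\ref{VPFPCS-dimensionless-semihyperbolicscaling.form}) by $|v|^k$, integrate by parts as in Proposition~\ref{Momenta_equations.pro}, divide by $\varepsilon^{2\gamma}$, integrate in time dropping the nonnegative $t=T$ endpoint, and symmetrize the Cucker--Smale term. The bookkeeping of the $\varepsilon$-powers you describe is accurate.
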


Again, the choice $k=2$ leads to the next estimates for the scaled first and second order velocity moments.

\begin{cor}\label{Moments_bounds_k=2.cor2}
Let the initial distribution functions $f_\varepsilon^0$ verify (\ref{hypothesis.initial.data.f.form}), the external forces $-\nabla\psi_\varepsilon$ fulfil (\ref{hypothesis.initial.data.F.form}) and consider the strong global in time solution $f_\varepsilon$ to (\ref{VPFPCS-dimensionless-semihyperbolicscaling.form}) with initial data $f_\varepsilon^0$ and $\lambda\in (0,N/2)$. Then,
\begin{align*}
\left\Vert \vert v\vert \frac{f_\varepsilon}{\varepsilon^\gamma}\right\Vert_{L^2(0,T;L^1(\RR^{2N}))}&\leq M_0^{1/2}\left(\left\Vert \vert v\vert^2 \frac{f_\varepsilon}{\varepsilon^{2\gamma}}\right\Vert_{L^1(0,T;L^1(\RR^{2N}))}\right)^{1/2},\\
\left\Vert \vert v\vert^2 \frac{f_\varepsilon}{\varepsilon^{2\gamma}}\right\Vert_{L^1(0,T;L^1(\RR^{2N}))}&\leq 2\varepsilon^{1-\gamma}E_0+\left(2NT+F_0^2\right)M_0.
\end{align*}
In addition, the next bound also holds
$$
\frac{1}{\varepsilon^{2\gamma}}\int_0^T\int_{\RR^{4N}}\phi_\varepsilon(\vert x-y\vert)\vert v-w\vert^2f_\varepsilon(t,x,v)f_\varepsilon(t,y,w)\,dx\,dy\,dv\,dw\,dt\leq 2\varepsilon^{1-\gamma}E_0+\left(2NT+F_0^2\right)M_0.
$$
\end{cor}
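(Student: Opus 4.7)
The argument follows exactly the pattern of the proof of Corollary \ref{Moments_bounds_k=2.cor}, so the plan is essentially to track how the intermediate scaling redistributes powers of $\varepsilon$. I would first specialize Corollary \ref{Moments_bounds.cor2} to $k=2$. In that case the kinetic pairing simplifies as $(|v|^{k-2}v-|w|^{k-2}w)\cdot(v-w)=|v-w|^2$, and the term $k(N+k-2)\Vert |v|^{k-2}f_\varepsilon\Vert_{L^1(L^1)}$ collapses to $2NTM_0$ by conservation of the total mass in (\ref{MomentumEquation-mass2.form}). The outcome is the working inequality
\begin{equation*}
2\left\Vert |v|^2 \frac{f_\varepsilon}{\varepsilon^{2\gamma}}\right\Vert_{L^1(0,T;L^1(\RR^{2N}))} + \frac{1}{\varepsilon^{2\gamma}}\int_0^T\!\!\int_{\RR^{4N}}|v-w|^2\phi_\varepsilon(|x-y|)f_\varepsilon(t,x,v)f_\varepsilon(t,y,w)\,dx\,dy\,dv\,dw\,dt \le \varepsilon^{1-\gamma}E_0 + 2NTM_0 + 2\left\Vert |v|\frac{f_\varepsilon}{\varepsilon^\gamma}\,\nabla_x\psi_\varepsilon\right\Vert_{L^1(L^1)}.
\end{equation*}

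Next I would bound the last summand. An application of the Cauchy--Schwartz inequality in $v$, together with mass conservation, yields the first inequality in the statement:
\[
\left\Vert |v|\frac{f_\varepsilon}{\varepsilon^\gamma}\right\Vert_{L^2(0,T;L^1(\RR^{2N}))} \le M_0^{1/2}\left(\left\Vert |v|^2 \frac{f_\varepsilon}{\varepsilon^{2\gamma}}\right\Vert_{L^1(0,T;L^1(\RR^{2N}))}\right)^{1/2}.
\]
Then Cauchy--Schwartz in time, combined with the assumption $\Vert \nabla\psi_\varepsilon\Vert_{L^2(0,T;L^\infty)}\le F_0$ coming from hypothesis (\ref{hypothesis.initial.data.F.form}), and Young's inequality $2ab\le a^2+b^2$ applied with $a=F_0 M_0^{1/2}$ and $b$ equal to the square root of the scaled second moment, allows the force term to be dominated by $F_0^2 M_0 + \left\Vert |v|^2 f_\varepsilon/\varepsilon^{2\gamma}\right\Vert_{L^1(L^1)}$. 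Plugging this back into the working inequality and absorbing the resulting second-moment contribution into the doubled copy present on the left-hand side gives
\[
\left\Vert |v|^2 \frac{f_\varepsilon}{\varepsilon^{2\gamma}}\right\Vert_{L^1(L^1)} + \frac{1}{\varepsilon^{2\gamma}}\int_0^T\!\!\int_{\RR^{4N}}|v-w|^2\phi_\varepsilon(|x-y|) f_\varepsilon f_\varepsilon \le \varepsilon^{1-\gamma}E_0 + (2NT+F_0^2)M_0,
\]
from which the second and third displayed inequalities follow by keeping only the second-moment term or only the dissipation term on the left-hand side (and absorbing $\varepsilon^{1-\gamma}E_0\le 2\varepsilon^{1-\gamma}E_0$ if one wants to match verbatim the constants written in the statement).

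No substantively new difficulty is expected with respect to the hyperbolic case $\gamma=0$ already handled in Corollary \ref{Moments_bounds_k=2.cor}. The crucial point is that the velocity diffusion has been scaled by $\varepsilon^{2\gamma}$ in (\ref{VPFPCS-dimensionless-semihyperbolicscaling.form}), which is exactly the factor needed for the contribution of $\divop_v(\varepsilon^{2\gamma}\nabla_v f_\varepsilon)$ in the second-moment identity to reproduce the $2NTM_0$ bound, while the inertial and force terms generate the $\varepsilon^{1-\gamma}$ and $\varepsilon^{-\gamma}$ factors recorded in Corollary \ref{Moments_bounds.cor2}. The absorption scheme then closes without change, and the resulting bounds are uniform in $\varepsilon$ throughout the whole range $\gamma\in[0,1]$, which is precisely what is needed to repeat the compactness, non-concentration and convergence arguments of Section \ref{Section-Hyperbolic} in this intermediate regime.
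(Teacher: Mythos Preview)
Your proposal is correct and follows exactly the approach the paper intends: the paper does not spell out a separate proof of Corollary~\ref{Moments_bounds_k=2.cor2} but merely states it as the $k=2$ specialization of Corollary~\ref{Moments_bounds.cor2}, to be treated precisely as in the proof of Corollary~\ref{Moments_bounds_k=2.cor}. Your observation about the harmless slack $\varepsilon^{1-\gamma}E_0\le 2\varepsilon^{1-\gamma}E_0$ in the stated constant is also accurate.
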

By virtue of the preceding Corollary \ref{Moments_bounds_k=2.cor2}, we can pass to the limit again in the equation of balance of current (\ref{MomentumEquation-current2.form}) and we obtain
$$\lim_{\varepsilon\rightarrow 0}\frac{j_\varepsilon}{\varepsilon^\gamma}=\lim_{\varepsilon\rightarrow 0}\left\{\left(\phi_\varepsilon*\frac{j_\varepsilon}{\varepsilon^\gamma}\right)-(\phi_\varepsilon*\rho_\varepsilon)\frac{j_\varepsilon}{\varepsilon^\gamma}-\rho_\varepsilon\nabla\psi_\varepsilon\right\},$$
in the sense of distributions. The next step is to identify the limit of the nonlinear term in the above commutator. The same ideas as above shows that
\begin{align*}
\rho_\varepsilon&\overset{*}{\rightharpoonup}\rho, \hspace{0.25cm} \mbox{in }L^\infty(0,T;\mathcal{M}(\RR^N)),\\
\frac{j_\varepsilon}{\varepsilon^\gamma}&\overset{*}{\rightharpoonup} j, \hspace{0.25cm} \mbox{in }L^2(0,T;\mathcal{M}(\RR^N))^N.
\end{align*}
Again, the continuity equation provides us with some extra compactness with respect to time (see Theorem \ref{Mass_convergence_strong_weak_star.theo}), namely,
$$\rho_\varepsilon\overset{*}{\rightarrow} \rho\hspace{0.25cm}\mbox{in }C([0,T];\mathcal{M}(\RR^N)-weak\,*)$$
that ensure the convergence of the tensor product of both measures
$$\rho_\varepsilon\otimes \frac{j_\varepsilon}{\varepsilon^\gamma}\overset{*}{\rightharpoonup} \rho\otimes j\hspace{0.25cm}\mbox{in }L^2(0,T;\mathcal{M}(\RR^{2N}))^N.$$
Consequently, we recover an analogous convergence result to Corollary \ref{Commutator_forces_limits.cor} for any exponent $\lambda\in (0,1/2]$ and the same limiting macroscopic system (\ref{Euler-limit-j}), hence (\ref{Euler-limit.intro}).

\begin{rem}
Recall that the intermediate scaling in Equation (\ref{VPFPCS-dimensionless-semihyperbolicscaling.form}) with $\gamma\in (0,1]$ differs from that in the Vlassov--Poisson--Fokker--Planck system \cite{BellouquidCalvoNietoSoler-Intermedio,PoupaudSoler-Parabolico}. Specifically, in this paper the velocity diffusion is of low order of $\varepsilon$ and each term in the Fokker--Planck differential operator is scaled in a different way (recall the choice $\mathcal{V}=\varepsilon^\gamma$ in Appendix \ref{Appendix-Scaling}). 

In particular, the parabolic case $\gamma=1$ in \cite{PoupaudSoler-Parabolico} does not enjoy any bound for the scaled kinetic energy in the spirit of Corollary \ref{Moments_bounds_k=2.cor2}. Consequently, dividing the analogue to the first order moment equation (\ref{VPFPCS-dimensionless-semihyperbolicscaling.form}) by $\varepsilon$ one arrives at a term $\varepsilon^{1-\gamma}\divop_x\mathcal{S}_\varepsilon$ that cannot be shown to converge towards zero if $\gamma=1$ in contrast with our scaling, where we do enjoy the above-mentioned bound of the scaled kinetic energy. Indeed, it is shown in \cite{PoupaudSoler-Parabolico} that
$$\divop_x\mathcal{S}_\varepsilon\rightharpoonup \nabla_x\rho\hspace{0.25cm}\mbox{in }\mathcal{D}'((0,T)\times\RR^N).$$
As a consequence, a diffusion term appears in the continuity equation in such scaling of the Vlasov--Poisson--Fokker--Planck system and this does no longer happen in our particular parabolic scaling (\ref{VPFPCS-dimensionless-semihyperbolicscaling.form}) of the kinetic Cucker--Smale model.
\end{rem}

\subsection{Hydrodynamic limit in the frictionless case}\label{Subsection-HyperbolicFrictionless}
In the frictionless case, the same technique cannot provide us with the desired a priori estimates since the friction terms is lacking in the system. Consequently, one has to rely on estimates that arise from the inertial terms in the equations. To this end, the appropriate choice of the scaling is some hyperbolic scale where the velocity diffusion and external force are of low order of $\varepsilon$. Specifically, we will consider the scaled system (\ref{VPFPCS-dimensionless-hyperbolicscaling-frictionless-appendix.form}) in Appendix \ref{Appendix-Scaling}, i.e.,
\begin{equation}\label{VPFPCS-dimensionless-hyperbolicscaling-frictionless.form}
\varepsilon \frac{\partial f_\varepsilon}{\partial t}+\varepsilon v\cdot \nabla_x f_\varepsilon-\varepsilon\nabla_x\psi_\varepsilon\cdot \nabla_v f_\varepsilon=\divop_v(\varepsilon \nabla_v f_\varepsilon+f_\varepsilon \phi_\varepsilon*\rho_\varepsilon v-f_\varepsilon\phi_\varepsilon*j_\varepsilon).
\end{equation}
The associated hierarchy of velocity moments then takes the form
\subsubsection*{Mass conservation}
\begin{equation}\label{MomentumEquation-mass.frictionless.form}
\frac{\partial \rho_\varepsilon}{\partial t}+\divop_x j_\varepsilon=0.
\end{equation}
\subsubsection*{Current balance}
\begin{equation}\label{MomentumEquation-current.frictionless.form}
\varepsilon\frac{\partial j_\varepsilon}{\partial t}+\varepsilon\divop_x \mathcal{S}_\varepsilon+\varepsilon\rho_\varepsilon\nabla_x\psi_\varepsilon+\left(\phi_\varepsilon*\rho_\varepsilon\right)j_\varepsilon-\left(\phi_\varepsilon*j_\varepsilon\right)\rho_\varepsilon=0.
\end{equation}
\subsubsection*{Energy balance}
\begin{equation}\label{MomentumEquation-energy.frictionless.form}
\varepsilon\frac{\partial \mathcal{S_\varepsilon}}{\partial t}+\varepsilon\divop_x\mathcal{T}_\varepsilon+2\varepsilon \Sym(j_\varepsilon\otimes \nabla_x\psi_\varepsilon)+2\left(\left(\phi_\varepsilon*\rho_\varepsilon\right)\mathcal{S}_\varepsilon-\varepsilon\rho_\varepsilon I\right)-2\Sym((\phi_\varepsilon*j_\varepsilon)\otimes j_\varepsilon)=0.
\end{equation}
First, let us obtain some estimates for the moments of $f_\varepsilon$ in the same spirit as Proposition \ref{Momenta_equations.pro}. Straightforward computations that are identical to that in the above-mentioned Propositions yields the next analogue.
\begin{pro}\label{Momenta_equations.pro3}
Let the initial distribution functions $f_\varepsilon^0$ verify (\ref{hypothesis.initial.data.f.form}), the external forces $-\nabla\psi_\varepsilon$ be bounded in $L^2(0,T;L^\infty(\RR^N))^N$ and consider the strong global in time solution $f_\varepsilon$ to (\ref{VPFPCS-dimensionless-semihyperbolicscaling.form}) with initial data $f_\varepsilon^0$ and $\lambda\in (0,N/2)$. Consider any nonnegative integer $k$. Then, the $k$-th order moments in $x$ and $v$ of $f_\varepsilon$ obey the following equations,
\begin{align*}
\varepsilon\frac{\partial}{\partial t}\int_{\RR^N}\int_{\RR^N}\vert v\vert^k f_\varepsilon\,dx\,dv=&-k\varepsilon\int_{\RR^N}\int_{\RR^N}\vert v\vert^{k-2}v\cdot\nabla_x\psi_\varepsilon f_\varepsilon\,dx\,dv\nonumber\\
&+k(N+k-2)\varepsilon\int_{\RR^N}\int_{\RR^N}\vert v\vert^{k-2}f_\varepsilon\,dx\,dv\nonumber\\
&-k\int_{\RR^N}\int_{\RR^N}\vert v\vert^{k-1}\left((\phi_\varepsilon*\rho_\varepsilon) \vert v\vert-(\phi_\varepsilon*j_\varepsilon)\cdot\frac{v}{\vert v\vert}\right)f_\varepsilon\,dx\,dv\\
\frac{\partial}{\partial t}\int_{\RR^N}\int_{\RR^N}\vert x\vert^k f_\varepsilon\,dx\,dv=&\hspace{0.35cm}k\int_{\RR^N}\int_{\RR^N}\vert x\vert^{k-2}x\cdot v f_\varepsilon\,dx\,dv
\end{align*}
Note that in particular the total mass remains constant, i.e.,
$$
\frac{d}{dt}\int_{\RR^N} \rho_\varepsilon(t,x)\,dx=0.
$$
\end{pro}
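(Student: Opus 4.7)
The plan is to repeat the moment computation of Proposition \ref{Momenta_equations.pro}, adapted to the new scaling of (\ref{VPFPCS-dimensionless-hyperbolicscaling-frictionless.form}). The smoothness and compact support of $f_\varepsilon^0$, combined with the hypotheses on the external force, guarantee that the strong solution $f_\varepsilon$ stays in a class with sufficient regularity and velocity decay so that every integration by parts below is legitimate and every boundary contribution at infinity vanishes.

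First I would multiply (\ref{VPFPCS-dimensionless-hyperbolicscaling-frictionless.form}) by $|v|^k$ and integrate over $\RR^N_x\times \RR^N_v$. The $x$-transport term $\varepsilon v\cdot\nabla_x f_\varepsilon$ disappears after integration in $x$, while the remaining terms are treated by integration by parts in $v$ through the elementary identities $\nabla_v|v|^k=k|v|^{k-2}v$ and $\Delta_v|v|^k=k(N+k-2)|v|^{k-2}$. The force term produces the contribution $-\varepsilon k\iint|v|^{k-2}v\cdot\nabla_x\psi_\varepsilon\,f_\varepsilon$; the diffusion $\divop_v(\varepsilon\nabla_v f_\varepsilon)$ becomes $\varepsilon k(N+k-2)\iint|v|^{k-2}f_\varepsilon$ after two integrations by parts; and the Cucker--Smale pair $\divop_v(f_\varepsilon\phi_\varepsilon*\rho_\varepsilon\,v-f_\varepsilon\phi_\varepsilon*j_\varepsilon)$ yields exactly the nonlinear alignment contribution $-k\iint|v|^{k-1}\bigl((\phi_\varepsilon*\rho_\varepsilon)|v|-(\phi_\varepsilon*j_\varepsilon)\cdot v/|v|\bigr)f_\varepsilon$. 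Collecting these pieces gives the claimed $|v|^k$-moment identity.

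Second, I would derive the $x$-moment identity by multiplying (\ref{VPFPCS-dimensionless-hyperbolicscaling-frictionless.form}) by $|x|^k$ and integrating in $x$ and $v$. The two pieces that are in divergence form in $v$, namely $-\varepsilon\nabla_x\psi_\varepsilon\cdot\nabla_v f_\varepsilon$ and the whole right-hand side, vanish after integration by parts in $v$ because $|x|^k$ is $v$-independent. Only the $x$-transport term survives, and a single integration by parts in $x$ (using $\nabla_x|x|^k=k|x|^{k-2}x$ together with the fact that $v$ is independent of $x$) produces $\varepsilon\,\partial_t\int|x|^kf_\varepsilon=\varepsilon k\iint|x|^{k-2}x\cdot v\,f_\varepsilon$; dividing by $\varepsilon$ yields the stated equation. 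Finally, conservation of total mass follows at once from integrating the continuity equation (\ref{MomentumEquation-mass.frictionless.form}) in $x$ and discarding the resulting divergence.

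No genuine obstacle is expected: the argument is a direct bookkeeping of integrations by parts, entirely parallel to Proposition \ref{Momenta_equations.pro}. The only point to keep track of is the $\varepsilon$-scaling, which differs from (\ref{VPFPCS-dimensionless-hyperbolicscaling.restated.form}) through the absence of the friction term $\divop_v(f_\varepsilon v)$ and through the additional factors of $\varepsilon$ sitting in front of the diffusion and the force; these modifications explain why, in contrast with Proposition \ref{Momenta_equations.pro}, no damping term $-k\iint|v|^kf_\varepsilon$ appears in the $v$-moment balance and why every surviving term on the right carries at least one factor of $\varepsilon$ or one factor of $\phi_\varepsilon$.
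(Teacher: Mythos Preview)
Your proposal is correct and follows exactly the approach indicated by the paper, which merely remarks that ``Straightforward computations that are identical to that in the above-mentioned Propositions yield the next analogue'' without writing out any details. You have in fact supplied those details: multiply by $|v|^k$ (resp.\ $|x|^k$), integrate, and perform the obvious integrations by parts, keeping track of the $\varepsilon$-factors specific to the frictionless scaling (\ref{VPFPCS-dimensionless-hyperbolicscaling-frictionless.form}).
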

As a direct consequence, the case $k=2$ entails the next analogue of Corollary \ref{Moments_bounds.cor}.
\begin{cor}\label{Moments_bounds_k=2.cor3}
Let the initial distribution functions $f_\varepsilon^0$ verify (\ref{hypothesis.initial.data.f.form}), the external forces be bounded in $L^2(0,T;L^\infty(\RR^N))^N$ and consider the strong global in time solution $f_\varepsilon$ to (\ref{VPFPCS-dimensionless-semihyperbolicscaling.form}) with initial data $f_\varepsilon^0$ and $\lambda\in (0,N/2)$. Then,
\begin{align*}
\left\Vert \vert v\vert f_\varepsilon\right\Vert_{L^\infty(0,T;L^1(\RR^{2N}))}&\leq M_0^{1/2}\left(\left\Vert \vert v\vert^2 f_\varepsilon\right\Vert_{L^\infty(0,T;L^1(\RR^{2N}))}\right)^{1/2},\\
\frac{1}{2}\left\Vert \vert v\vert^2 f_\varepsilon\right\Vert_{L^\infty(0,T;L^1(\RR^{2N}))}&\leq 2E_0+\left(2NT+2F_0^2\right)M_0.
\end{align*}
In addition, the next bound also holds
\begin{align*}
\frac{1}{\varepsilon^{2\lambda}}\int_0^T\int_{\RR^{4N}}\phi_\varepsilon(\vert x-y\vert)\vert v-w\vert^2f_\varepsilon(t,x,v)f_\varepsilon(t,y,w)&\,dx\,dy\,dv\,dw\,dt\\
&\leq 2\varepsilon^{1-2\lambda}E_0+\varepsilon^{1-2\lambda}\left(2NT+2F_0^2\right)M_0.
\end{align*}
\end{cor}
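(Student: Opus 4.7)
The first estimate is an immediate consequence of the Cauchy--Schwarz inequality applied pointwise in time,
$$\int_{\RR^{2N}} |v| f_\varepsilon(t,x,v)\,dx\,dv \leq \left(\int_{\RR^{2N}} f_\varepsilon\,dx\,dv\right)^{1/2}\left(\int_{\RR^{2N}} |v|^2 f_\varepsilon\,dx\,dv\right)^{1/2},$$
combined with conservation of mass $\|f_\varepsilon(t)\|_{L^1(\RR^{2N})}=M_0$, which is obtained from the last equation in Proposition \ref{Momenta_equations.pro3} with $k=0$. Taking the $L^\infty$ norm in $t$ then yields the first inequality.

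For the remaining estimates I would invoke Proposition \ref{Momenta_equations.pro3} with $k=2$. The interaction term can be expanded by writing $\phi_\varepsilon*\rho_\varepsilon$ and $\phi_\varepsilon*j_\varepsilon$ as double integrals against $f_\varepsilon(t,y,w)$ and then symmetrizing under the exchange $(x,v)\leftrightarrow(y,w)$ (using the symmetry of $\phi_\varepsilon$ in its spatial argument) to recognize
$$-2\int_{\RR^{2N}}|v|\bigl((\phi_\varepsilon*\rho_\varepsilon)|v|-(\phi_\varepsilon*j_\varepsilon)\cdot\tfrac{v}{|v|}\bigr)f_\varepsilon\,dx\,dv = -\int_{\RR^{4N}}\phi_\varepsilon(|x-y|)|v-w|^2 f_\varepsilon(t,x,v)f_\varepsilon(t,y,w)\,dx\,dy\,dv\,dw,$$
which is manifestly nonpositive. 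Integrating the resulting moment identity on $[0,t]$, dividing by $\varepsilon$, and temporarily discarding this nonnegative dissipation yields the pointwise-in-time estimate
$$E_\varepsilon(t) \leq E_\varepsilon(0) + 2NtM_0 + 2 \int_0^t \|\nabla_x\psi_\varepsilon(s,\cdot)\|_{L^\infty(\RR^N)} \bigl\|\,|v|\,f_\varepsilon(s)\bigr\|_{L^1(\RR^{2N})}\,ds,$$
where $E_\varepsilon(t):=\int_{\RR^{2N}}|v|^2 f_\varepsilon(t,x,v)\,dx\,dv$.

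To close the estimate, I would feed the first inequality into the forcing term and then apply Young's inequality $2ab\leq a^2+b^2$:
$$2\|\nabla_x\psi_\varepsilon\|_{L^\infty}\bigl\|\,|v|\,f_\varepsilon\bigr\|_{L^1} \leq 2M_0^{1/2}\|\nabla_x\psi_\varepsilon\|_{L^\infty}E_\varepsilon(s)^{1/2} \leq E_\varepsilon(s) + M_0\|\nabla_x\psi_\varepsilon\|_{L^\infty}^2,$$
which together with (\ref{hypothesis.initial.data.F.form}) and the Gronwall lemma (absorbed into the constants, or applied after taking the supremum in $t$) produces the stated $L^\infty(0,T;L^1(\RR^{2N}))$ bound. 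For the third estimate I would return to the energy identity \emph{before} dropping the dissipation, deduce that
$$\frac{1}{\varepsilon}\int_0^T\int_{\RR^{4N}}\phi_\varepsilon(|x-y|)|v-w|^2 f_\varepsilon f_\varepsilon\,dx\,dy\,dv\,dw\,ds \leq 2E_0+(2NT+2F_0^2)M_0,$$
and multiply both sides by $\varepsilon^{1-2\lambda}$ to arrive at the announced scaled dissipation bound.

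The only real subtlety, compared with the frictional Corollary \ref{Moments_bounds_k=2.cor}, is the closure argument: in the frictional scaling the term $\divop_v(f_\varepsilon v)$ contributes $2\||v|^2f_\varepsilon\|_{L^1(0,T;L^1)}$ on the left-hand side, which absorbs the forcing term directly and yields an $L^1(0,T)$-in-time estimate with sharp constants. In the present frictionless setting no such absorbing term is available, so one is forced to pass through Young's inequality and Gronwall to obtain an $L^\infty(0,T)$-in-time bound; this is routine but accounts for the $F_0^2$ contribution to the constant.
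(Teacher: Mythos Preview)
Your approach is correct and matches what the paper does (the paper gives no proof beyond ``as a direct consequence, the case $k=2$ entails the next analogue of Corollary~\ref{Moments_bounds.cor}''): take $k=2$ in Proposition~\ref{Momenta_equations.pro3}, symmetrize the interaction term, integrate in time, and close via Cauchy--Schwarz plus Young. Your final paragraph correctly pinpoints the one genuine difference with the frictional case, namely that the absorbing term $2\|\,|v|^2 f_\varepsilon\|_{L^1(0,T;L^1)}$ is absent here, forcing a Gronwall-type closure for an $L^\infty$-in-time bound rather than the direct $L^1$-in-time absorption of Corollary~\ref{Moments_bounds_k=2.cor}.

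One small remark on constants: your Gronwall route produces a factor $e^T$, and the alternative ``take the supremum in $t$ first, bound $E_\varepsilon(s)^{1/2}\le A^{1/2}$, then apply Young'' produces $\frac{1}{2}A\le 2E_0+(2NT+2TF_0^2)M_0$ after using Cauchy--Schwarz in time on $\int_0^T\|\nabla_x\psi_\varepsilon\|_{L^\infty}\,ds\le T^{1/2}F_0$. Neither reproduces the stated constant \emph{exactly} (the paper's $2F_0^2$ seems to be missing a factor of $T$), but this is cosmetic: the paper does not track these constants carefully either, and what matters downstream is only the $\varepsilon$-independence of the bound, which you have.
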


Again, the preceding estimates can be arranged to show that
$$\rho_\varepsilon\otimes j_\varepsilon\overset{*}{\rightharpoonup} \rho\otimes j\ \mbox{ in }\ L^2(0,T;\mathcal{M}(\RR^{2N}))^N.$$
The same reasoning as in Section \ref{Section-Hyperbolic}, where one distinguishes again the regimes $\lambda\in (0,1/2)$ and $\lambda=1/2$ because of the concentration issues, yields the next result.

\begin{cor}\label{Current_limit_equation.frictionless.cor}
Let the initial distribution functions $f_\varepsilon^0$ verify (\ref{hypothesis.initial.data.f.form}), the external forces $-\nabla\psi_\varepsilon$ be bounded in $L^2(0,T;L^\infty(\RR^N))$ and consider the strong global in time solution $f_\varepsilon$ to (\ref{VPFPCS-dimensionless-semihyperbolicscaling.form}) with initial data $f_\varepsilon^0$ and $\lambda\in (0,1/2]$. Then, the limiting measures $\rho$ and $j$ of the approximate densities and currents $\rho_\varepsilon$ adn $j_\varepsilon$ verify the next system in the sense of distribution
\begin{equation}\label{Euler-limit-j-frictionless.intro}
\left\{
\begin{array}{ll}
\displaystyle \partial_t\rho +\divop j=0, & x\in\RR^N,\,t\in [0,T),\\
\displaystyle 0=(\phi_0*j)\rho-(\phi_0*\rho)j, & x\in\RR^N,\, t\in [0,T)\\
\displaystyle \rho(0,\cdot)=\rho^0, & x\in \RR^N.
\end{array}
\right.
\end{equation}
\end{cor}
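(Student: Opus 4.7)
The strategy mirrors closely the argument for Theorem \ref{Convergence.aproximate.solutions.teo} and Corollary \ref{Commutator_forces_limits.cor}, adapted to the frictionless scaling (\ref{VPFPCS-dimensionless-hyperbolicscaling-frictionless.form}). The key observation is that in the current balance (\ref{MomentumEquation-current.frictionless.form}) the friction term is absent, while the inertial contributions $\varepsilon\partial_t j_\varepsilon$ and $\varepsilon\divop_x\mathcal{S}_\varepsilon$ as well as the external force $\varepsilon\rho_\varepsilon\nabla_x\psi_\varepsilon$ all carry an $\varepsilon$ factor; hence the only surviving term in the limit is the commutator. This is precisely what yields the ``equilibrium'' equation $(\phi_0*j)\rho-(\phi_0*\rho)j=0$ of (\ref{Euler-limit-j-frictionless.intro}).

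First, I would extract weak-star compactness of $\rho_\varepsilon$ and $j_\varepsilon$. Thanks to Proposition \ref{Momenta_equations.pro3} and Corollary \ref{Moments_bounds_k=2.cor3} the mass is conserved and $\vert v\vert f_\varepsilon$ is uniformly bounded in $L^\infty(0,T;L^1(\RR^{2N}))$ (stronger than the $L^2$-in-time bound of Section \ref{Section-Hyperbolic}), yielding along subsequences $\rho_\varepsilon\overset{*}{\rightharpoonup}\rho$ and $j_\varepsilon\overset{*}{\rightharpoonup}j$ in $L^\infty(0,T;\mathcal{M}(\RR^N))$. The continuity equation (\ref{MomentumEquation-mass.frictionless.form}) passes directly to the limit, giving $\partial_t\rho+\divop j=0$ in the distributional sense, and reproducing verbatim the argument of Theorem \ref{Mass_convergence_strong_weak_star.theo} one upgrades this to $\rho_\varepsilon\to\rho$ in $C([0,T];\mathcal{M}(\RR^N)-weak\,*)$, which in particular gives $\rho(0,\cdot)=\rho^0$ and enables the tensor product convergence
$$\rho_\varepsilon\otimes j_\varepsilon\overset{*}{\rightharpoonup}\rho\otimes j\quad\text{in }L^\infty(0,T;\mathcal{M}(\RR^{2N}))^N,$$
exactly as in Corollary \ref{Joint_convergence.cor}.

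Next I would pass to the limit in the weak form of (\ref{MomentumEquation-current.frictionless.form}). The terms $\varepsilon\partial_t j_\varepsilon$ and $\varepsilon\rho_\varepsilon\nabla_x\psi_\varepsilon$ vanish trivially in $\mathcal{D}'$ since $j_\varepsilon$ is bounded in $L^\infty(0,T;\mathcal{M}(\RR^N))^N$ and $\rho_\varepsilon\nabla_x\psi_\varepsilon$ is bounded in $L^2(0,T;\mathcal{M}(\RR^N))^N$. The stress term $\varepsilon\divop_x\mathcal{S}_\varepsilon$ also disappears, using that by the second order moment bound in Corollary \ref{Moments_bounds_k=2.cor3} the tensor $\mathcal{S}_\varepsilon$ is bounded in $L^\infty(0,T;\mathcal{M}(\RR^N))^{N\times N}$. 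After testing the commutator against a test function $\varphi\in C^\infty_c([0,T)\times\RR^N,\RR^N)$ and symmetrizing as in (\ref{Current_equation_weak.form}), the passage to the limit reduces to showing
$$\tfrac{1}{2}\iiint H_\varphi^{\lambda,\varepsilon}(t,x,y)\cdot(\rho_\varepsilon\otimes j_\varepsilon-j_\varepsilon\otimes\rho_\varepsilon)\,dx\,dy\,dt\longrightarrow \tfrac{1}{2}\iiint H_\varphi^{\lambda,0}\cdot(\rho\otimes j-j\otimes\rho)\,dx\,dy\,dt.$$

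The main obstacle is, as in Section \ref{Section-Hyperbolic}, the endpoint case $\lambda=1/2$, where the limiting kernel $H_\varphi^{\frac12,0}$ is merely bounded (not continuous) along the diagonal. For $\lambda\in(0,1/2)$ the uniform convergence (\ref{Kernel_uniform_convergence.form}) together with the tensor product convergence closes the argument immediately. For $\lambda=1/2$ I would split the integral into $(H_\varphi^{\frac12,\varepsilon}-H_\varphi^{\frac12,0})(\rho_\varepsilon\otimes j_\varepsilon-j_\varepsilon\otimes\rho_\varepsilon)$ plus $H_\varphi^{\frac12,0}$ tested against $(\rho_\varepsilon\otimes j_\varepsilon-j_\varepsilon\otimes\rho_\varepsilon)-(\rho\otimes j-j\otimes\rho)$. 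The second piece is handled by a non-concentration lemma analogous to Lemma \ref{non-concentration.lem}, whose proof here is actually even more favorable: the dissipation bound in Corollary \ref{Moments_bounds_k=2.cor3} shows that $\int\phi_\varepsilon(\vert x-y\vert)\vert v-w\vert^2 f_\varepsilon f_\varepsilon\leq C\varepsilon$, so a Cauchy--Schwarz estimate yields $\vert\rho_\varepsilon\otimes j_\varepsilon-j_\varepsilon\otimes\rho_\varepsilon\vert(\Omega_R)\to 0$ as $\varepsilon,R\to 0$, and Proposition \ref{limitetestdiscontinua.pro} then delivers the convergence. The first piece is controlled through the pointwise estimate $\vert\phi_\varepsilon(r)-\phi_0(r)\vert\leq\varepsilon^{1/2}\phi_\varepsilon(r)^{1/2}/(\sqrt{3}r)$ from the proof of Corollary \ref{Commutator_forces_limits.cor}, combined with the same improved dissipation bound, giving an $O(\varepsilon)$ rate. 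Collecting everything yields (\ref{Euler-limit-j-frictionless.intro}).
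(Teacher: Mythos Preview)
Your proposal is correct and follows essentially the same route as the paper, which does not give a separate proof but simply states that ``the same reasoning as in Section \ref{Section-Hyperbolic}, where one distinguishes again the regimes $\lambda\in(0,1/2)$ and $\lambda=1/2$ because of the concentration issues, yields the next result.'' You have spelled out exactly this: the compactness step via Proposition \ref{Momenta_equations.pro3} and Corollary \ref{Moments_bounds_k=2.cor3}, the time-equicontinuity upgrade for $\rho_\varepsilon$, the tensor product convergence, and the splitting of the commutator for $\lambda=1/2$ using the non-concentration argument and the pointwise kernel estimate. Your observation that the dissipation bound here is of order $\varepsilon$ (rather than merely bounded, as in the frictional case) is precisely the content of Remark \ref{StrongConvergenceCommutator.frictionless.rem}, which the paper records immediately after the corollary and which in fact gives, for $\lambda\in(0,1/2)$, an even more direct route: the commutator $(\phi_\varepsilon*\rho_\varepsilon)j_\varepsilon-(\phi_\varepsilon*j_\varepsilon)\rho_\varepsilon$ tends to zero strongly in $L^2(0,T;L^1(\RR^{2N}))$ at rate $\varepsilon^{1/2-\lambda}$, bypassing the weak limit identification altogether.
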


\begin{rem}\label{StrongConvergenceCommutator.frictionless.rem}
In this particular case, the last estimate in Corollary \ref{Moments_bounds_k=2.cor3} provides us with an improved bound that lets quantifying a better convergence towards zero of the commutator, namely, strong convergence (not only in the sense of distributions) for the range of parameters $\lambda\in (0,1/2)$. Specifically, note that the Cauchy--Schwartz inequality, the obvious inequality $\phi_\varepsilon\leq
\frac{1}{\varepsilon^\lambda}\phi_\varepsilon^{1/2}$ and Corollary \ref{Moments_bounds_k=2.cor3} entail
\begin{align*}
\Vert (&\phi_\varepsilon*\rho_\varepsilon)j_\varepsilon-(\phi_\varepsilon*j_\varepsilon)\rho_\varepsilon\Vert_{L^2(0,T;L^1(\RR^{2N}))}\\
&\leq \left(\frac{1}{\varepsilon^{2\lambda}}\int_0^T\phi_\varepsilon(\vert x-y\vert)\vert v-w\vert^2 f_\varepsilon(t,x,v)f_\varepsilon(t,y,w)\,dx\,dy\,dv\,dw\,dt\right)^{1/2}\Vert f_\varepsilon(0)\Vert_{L^1(\RR^{2N})}\\
&\leq \varepsilon^{1/2-\lambda}\left(2E_0+\left(2NT+2F_0^2\right)M_0\right)^{1/2}M_0.
\end{align*}
In particular, since $\lambda<1/2$ then
$$\lim_{\varepsilon\rightarrow 0}\Vert(\phi_\varepsilon*\rho_\varepsilon)j_\varepsilon-(\phi_\varepsilon*j_\varepsilon)\rho_\varepsilon \Vert_{L^2(0,T;L^1(\RR^{2N}))}=0.$$
\end{rem}

\subsection{Rayleigh--Helmholtz friction towards linear friction}\label{Subsection-RayleighHelmholtz}

As it has be seen in Section \ref{Section-AnalisisLimit} in the analysis of the limiting equation (\ref{Euler-limit.intro}), the classical friction in the Fokker--Planck differential operator prevents the individuals from the desired self-propelled behavior since (at the microscopic level) it aims at reducing the particles' velocity to zero. Then, classical linear friction tends to halt the dynamics of the individuals unless we include an external force given by a potential $\psi=\psi(t,x)$ (see Section \ref{Section-Hyperbolic} and Subsection \ref{Subsection-SemihyperbolicFrictional}) or we neglect friction effects (see Subsection \ref{Subsection-HyperbolicFrictionless}). Depending on what are we modeling it might (or not) make sense. Actually, it is the nature of the environment where agents live that determines the kind of friction to be considered. For instance, assume that individuals live inside some viscous material so that its velocity decreases proportionally to the velocity itself at a constant rate $\mu$. In such case, the microscopic Langevin equation reads
$$\left\{
\begin{array}{ll}
\displaystyle\hspace{0.30cm}\dot{x_i}=v_i, & \displaystyle t\geq 0,\\
\displaystyle m\dot{v_i}=-\mu v+\frac{1}{n}\sum_{j\neq i}\phi(\vert x_i-x_j\vert)(v_j-v_i)+\sqrt{2D}\xi_i(t), & \displaystyle t\geq 0,
\end{array}
\right.$$
and the mesoscopic description takes the form
$$\frac{\partial f}{\partial t}+v\cdot\nabla_xf=\divop_v(\mu vf+D\nabla_v f+Q_{CS}^\phi(f,f)),$$
which is the kinetic model that we have been focused on so far. Naturally, such friction has being imposed by the medium. However, the individuals might slow down its velocity according to any $v$-dependent friction coefficient $\mu=\mu(v)$. A particularly interesting choice by its consequences in the modeling of self-propelled behavior is the \textit{Rayleigh--Helmholtz friction}, see \cite{CarrilloFornasierToscaniVecil,Romanczuk}. It arose from the theory of sound developed by Rayleigh and Helmholtz and takes the form
$$\mu(v):=\beta \vert v\vert^2-\alpha.$$
Note that, it consists of a decrease term, $-\beta \vert v\vert^2 v$, in the same spirit as the classical friction and an increase term, $+\alpha v$, that can be understood as an intrinsic self-propulsion of individuals to surpass the medium natural friction. Such competition leads to a natural asymptotic velocity $\sqrt{\alpha/\beta}$. To understand it, let us forget about the interactions and stochastic effects and restrict ourselves to one single particle and one spacial dimension. This gives rise to the next first order scalar ODEs
$$\dot{v}=-\mu v, \hspace{0.5cm}\dot{v}=(\alpha -\beta v^2)v.$$
On the one hand, the former has an only velocity equilibrium, namely $v=0$, that is asymptotically stable. On the second hand, the latter enjoys three different equilibria, namely,
$$
v=-\sqrt{\alpha/\beta},\hspace{0.25cm}v=0\hspace{0.2cm}\mbox{and}\hspace{0.2cm}v=\sqrt{\alpha/\beta}.
$$
Here, $v=0$ is unstable whilst the remaining two equilibria are asymptotically stable. Consequently, the velocities evolve as depicted in Figure \ref{fig:Fig3}. Note that when the initial velocity is under the threshold one, the self-propulsion term helps to achieve such asymptotic velocity. Similarly, when the initial velocity is over the threshold one, the medium friction slow down agent's velocity until it relaxes towards the asymptotic one. In particular, note that whenever $\alpha\neq 0$ the asymptotic velocity of the particle is no longer zero, which is a much more realistic property to model flocking behavior of birds under the influence of some friction arising from the medium.
\begin{figure}
\centering
\includegraphics[scale=0.7]{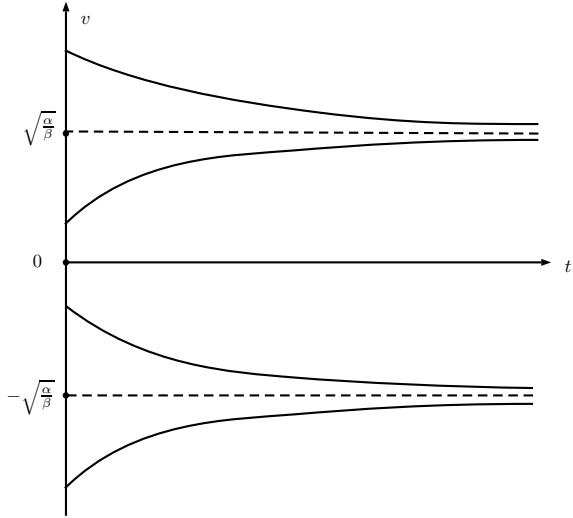}
\centering
\caption{Velocity tendency under the Rayleigh--Helmholtz friction equation}\label{fig:Fig3}
\end{figure}
Hence, why don't we study the same kind of hydrodynamic limit as above when one considers alignment interactions along with Rayleigh--Helmholtz friction, e.g.,
$$\varepsilon^{1+\gamma}\frac{\partial f_\varepsilon}{\partial t}+\varepsilon v\cdot \nabla_x f-\varepsilon^\gamma\nabla_x\psi_\varepsilon\cdot \nabla_v f_\varepsilon=\divop_v((\beta \vert v\vert^2-\alpha)vf+\varepsilon^{2\gamma}\nabla_v f_\varepsilon+Q_{CS}^{\phi_\varepsilon}(f_\varepsilon,f_\varepsilon))\mbox{?}$$ 
The reason is apparent as long as one computes the hierarchy of moments and try to obtain the same type of estimates as in the preceding sections, that validated the rigorous convergence result when $\varepsilon\rightarrow 0$. It is clear that one can obtain bounds for velocity moments of high enough order. However, it is not clear at all how the hydrodynamic limit might help on closing the hierarchy of macroscopic equations of moments since in the equation of any moment (say the i-th velocity moment) some twice higher order moment appears and it cannot be said to converge to zero, but only (at most) to remain bounded. 

\begin{rem}
Another possibility (that we will no study here) might be to consider a first order nonlinear friction enjoying the same sort of self-propulsion effects. For instance, one might consider the next one
$$-\mu(v)v=-\gamma_0(\vert v\vert-v_0)\frac{v}{\vert v\vert},$$
for $\gamma_0,v_0>0$, that is known as the Schienbein--Gruler friction (see \cite{Romanczuk} and the references therein, where its is proposed as a linearization of the above-mentioned Rayleigh--Helmholtz friction in the theory of \textit{active Brownian motions}). It also arises in the modeling of certain type of cell of granulocyte type. As it is apparent, the velocity $v_0$ is the asymptotic one that the system tends to achieve. However, from a mathematical point of view, such friction term is less well behaved due to the obvious discontinuity at $v=0$. Nevertheless, this friction term can also be compared with the classical linear friction in the limit $\vert v\vert\rightarrow +\infty$.
\end{rem}

In this subsection we will compare the first two types of friction (classical and Rayleigh--Helmholtz) by considering a family of intermediate frictions that approach, in the limit $\varepsilon\rightarrow 0$, the classical friction case. Such intermediate frictions take the form
$$\mu(v)=\beta \vert v\vert^k-\alpha,$$
for $0\leq k\leq 2$ and $\alpha,\beta\geq 0$. Note that $k=0$ and $\alpha=0$ yields the classical linear friction. To summarize, the kind of kinetic equations that we will deal with takes the form
$$\frac{\partial f}{\partial t}+v\cdot \nabla_x f-\frac{1}{m}\nabla_x\psi\cdot \nabla_v f=\divop_v((\beta\vert v\vert^k-\alpha)vf+D\nabla_v f+Q_{CS}^\phi(f,f)).$$
First note that the asymptotic velocity of the friction term is now $(\alpha/\beta)^{1/k}$ and it converges towards zero in the coupled limit $\alpha\rightarrow 0$ and $k\rightarrow 0$. A dimensional analysis like in Appendix \ref{Appendix-Scaling} allows introducing the next intermediate scaling of the system
\begin{equation}\label{P_epsilon_frictionlimit.form}
\varepsilon^{1+\gamma}\partial_t f_\varepsilon+\varepsilon v\cdot \nabla_x f_\varepsilon-\varepsilon^\gamma\nabla_x\psi_\varepsilon\cdot \nabla_vf_\varepsilon=\divop_v\left((\vert v\vert^{k(\varepsilon)}-\alpha(\varepsilon))v f_\varepsilon+\varepsilon^{2\gamma}\nabla_v f_\varepsilon+Q_{CS}^{\phi_\varepsilon}(f_\varepsilon,f_\varepsilon)\right),
\end{equation}
for some functions $k=k(\varepsilon)$ and $\alpha=\alpha(\varepsilon)$ such that $k(\varepsilon)\searrow 0$ and $\alpha(\varepsilon)\searrow 0$ when $\varepsilon\searrow 0$. In the formal limit $\varepsilon\rightarrow 0$, we expect to recover the same hydrodynamic limit as with a fixed classical friction,  i.e., Equation (\ref{Euler-limit.intro}). As in the previous case, we refer to  \cite{B,CS,HaTadmor}  for the combination of ideas to deal with the analysis of existence for solutions to this system.

Let us sketch how does the hierarchy of velocity moments looks like and how can we obtain similar estimates to rigorously passing to the limit.

\subsubsection{Hierarchy of moments}
In addition to the velocity moments $\rho_\varepsilon,j_\varepsilon,\mathcal{S}_\varepsilon,\mathcal{T}_\varepsilon$, let us define
\begin{align*}
q_\varepsilon^{k(\varepsilon)+1}&:=\int_{\RR^N}\vert v\vert^{k(\varepsilon)}v f_\varepsilon\,dv,\\
\mathcal{Q}_\varepsilon^{k(\varepsilon)+2}&:=\int_{\RR^N}\vert v\vert^{k(\varepsilon)}v\otimes v f_\varepsilon\,dv.
\end{align*}
Then, the first three velocity moments read as follows.

\subsubsection*{Mass conservation}
\begin{equation}\label{MomentumEquation-mass.limitfriction.form}
\frac{\partial \rho_\varepsilon}{\partial t}+\divop_x\left(\frac{j_\varepsilon}{\varepsilon^\gamma}\right)=0.
\end{equation}

\subsubsection*{Current balance}

\begin{equation}\label{MomentumEquation-current.limitfriction.form}
\varepsilon^{1+\gamma}\frac{\partial j_\varepsilon}{\partial t}+\varepsilon\divop_x \mathcal{S}_\varepsilon+\varepsilon^\gamma\rho_\varepsilon\nabla_x\psi_\varepsilon+(q^{k(\varepsilon)+1}_\varepsilon-\alpha(\varepsilon) j_\varepsilon)+\left(\phi_\varepsilon*\rho_\varepsilon\right)j_\varepsilon-\left(\phi_\varepsilon*j_\varepsilon\right)\rho_\varepsilon=0.
\end{equation}

\subsubsection*{Stress tensor balance}

\begin{multline}\label{MomentumEquation-energy.limitfriction.form}
\varepsilon^{1+\gamma}\frac{\partial \mathcal{S_\varepsilon}}{\partial t}+\varepsilon\divop_x\mathcal{T}_\varepsilon+2\varepsilon^\gamma\Sym(j_\varepsilon\otimes \nabla_x\psi_\varepsilon)+2(\mathcal{Q}_\varepsilon^{k(\varepsilon)+2}-\alpha(\varepsilon)\mathcal{S}_\varepsilon)\\
+2\left(\left(\phi_\varepsilon*\rho_\varepsilon\right)\mathcal{S}_\varepsilon-\varepsilon^{2\gamma}\rho_\varepsilon I\right)-2\Sym((\phi_\varepsilon*j_\varepsilon)\otimes j_\varepsilon)=0.
\end{multline}

\subsubsection{A priori bounds}
We will need some hypothesis on the coefficients $k(\varepsilon)$ and $\alpha(\varepsilon)$ in order to obtain appropriate a priori bounds. On the one hand, in the hyperbolic case, i.e., $\gamma=0$ we will assume that
\begin{equation}\label{Hypothesis.k.alpha.hyperbolic}\tag{$H_{\gamma=0}$}
k(\varepsilon)=o(1)\hspace{0.2cm}\mbox{ and }\hspace{0.2cm}\alpha(\varepsilon)=o(1)\hspace{0.5cm}\mbox{ when }\hspace{0.2cm}\varepsilon\rightarrow 0.
\end{equation}
On the other hand in the purely intermediate or parabolic cases $\gamma\in (0,1]$ we will assume
\begin{equation}\label{Hypothesis.k.alpha.intermediate}\tag{$H_{\gamma\neq 0}$}
k(\varepsilon)=O(\varepsilon^{2\gamma}),\hspace{0.2cm} k'(\varepsilon)=O(\varepsilon^{-(1-\gamma)})\hspace{0.2cm}\mbox{ and }\hspace{0.2cm}\alpha(\varepsilon)=o(1)\hspace{0.5cm}\mbox{ when }\hspace{0.2cm}\varepsilon\rightarrow 0.
\end{equation}
For the sake of simplicity, we first present the next lemma that will help us to control the velocity moments of order $1$, $2$ and $k(\varepsilon)+1$ in terms of that of order $k(\varepsilon)+2$.

\begin{lem}\label{interpolation.moments.frictionlimit.lem}
Let the initial distribution functions $f_\varepsilon^0$ verify (\ref{hypothesis.initial.data.f.form}), the external forces $-\nabla\psi_\varepsilon$ fulfill (\ref{hypothesis.initial.data.F.form}) and consider the strong global in time solution $f_\varepsilon$ to (\ref{P_epsilon_frictionlimit.form}) with initial data $f_\varepsilon^0$ and $\lambda\in (0,N/2)$. Let us also define the exponents
$$p_\varepsilon:=1+\frac{k(\varepsilon)}{2},\hspace{0.25cm}q_\varepsilon:=2+k(\varepsilon),\hspace{0.25cm}r_\varepsilon:=2-\frac{k(\varepsilon)}{1+k(\varepsilon)}$$
Then, the next estimates hold true
\begin{align*}
\Vert \vert v\vert^2 f_\varepsilon\Vert_{L^{p_\varepsilon}(0,T;L^1(\RR^{2N}))}&\leq \Vert \vert v\vert^{k(\varepsilon)+2}f_\varepsilon\Vert_{L^1(0,T;L^1(\RR^{2N}))}^{1/{p_\varepsilon}}M_0^{1/{p_\varepsilon'}},\\
\Vert \vert v\vert f_\varepsilon\Vert_{L^{q_\varepsilon}(0,T;L^1(\RR^{2N}))}&\leq \Vert \vert v\vert^{k(\varepsilon)+2}f_\varepsilon\Vert_{L^1(0,T;L^1(\RR^{2N}))}^{1/{q_\varepsilon}}M_0^{1/{q_\varepsilon'}},\\
\Vert \vert v\vert^{k(\varepsilon)+1}f_\varepsilon\Vert_{L^{r_\varepsilon}(0,T;L^1(\RR^{2N}))}&\leq \Vert \vert v\vert^{k(\varepsilon)+2}f_\varepsilon\Vert_{L^1(0,T;L^1(\RR^{2N}))}^{1/{r_\varepsilon}}M_0^{1/{r_\varepsilon'}}.
\end{align*}
In particular, H\"{o}lder's inequality together with the Young's inequality for real numbers entail the next estimates
\begin{align*}
\Vert \vert v\vert^2 f_\varepsilon\Vert_{L^1(0,T;L^1(\RR^{2N}))}&\leq \Vert \vert v\vert^{k(\varepsilon)+2}f_\varepsilon\Vert_{L^1(0,T;L^1(\RR^{2N}))}^{1/{p_\varepsilon}}\left(TM_0\right)^{1/{p_\varepsilon'}}\\
&\leq \frac{2}{2+k(\varepsilon)}\Vert \vert v\vert^{k(\varepsilon)+2}f_\varepsilon\Vert_{L^1(0,T;L^1(\RR^{2N}))}+\frac{k(\varepsilon)}{2+k(\varepsilon)}TM_0,\\
\Vert \vert v\vert f_\varepsilon\Vert_{L^2(0,T;L^1(\RR^{2N}))}&\leq \Vert \vert v\vert^{k(\varepsilon)+2}f_\varepsilon\Vert_{L^1(0,T;L^1(\RR^{2N}))}^{1/{q_\varepsilon}}T^{\frac{k(\varepsilon)}{2(k(\varepsilon)+2)}}M_0^{1/{q_\varepsilon'}}\\
&=\left(\Vert \vert v\vert^{k(\varepsilon)+2}f_\varepsilon\Vert_{L^1(0,T;L^1(\RR^{2N}))}^{1/{p_\varepsilon}}(TM_0)^{1/{p_\varepsilon'}}\right)^{1/2}M_0\\
&\leq \left(\frac{2}{2+k(\varepsilon)}\Vert \vert v\vert^{k(\varepsilon)+2}f_\varepsilon\Vert_{L^1(0,T;L^1(\RR^{2N}))}+\frac{k(\varepsilon)}{2+k(\varepsilon)}TM_0\right)^{1/2}M_0^{1/2}. 
\end{align*}
\end{lem}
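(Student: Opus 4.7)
The plan is to read each of the three bounds as a pointwise-in-time Hölder interpolation between the top-order moment $\displaystyle\int_{\RR^{2N}}\vert v\vert^{k(\varepsilon)+2}f_\varepsilon\,dx\,dv$ and the conserved mass $\Vert f_\varepsilon\Vert_{L^1(\RR^{2N})}\leq M_0$, and then to take the appropriate $L^p(0,T)$ norm in time. The only computational point is choosing the Hölder exponent so that the integrand on the heavy-moment side has exactly the weight $\vert v\vert^{k(\varepsilon)+2}$.

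Concretely, for any $\alpha\geq 0$ and any $p\geq 1$, I would write
$$\vert v\vert^{\alpha}f_\varepsilon=\bigl(\vert v\vert^{\alpha}f_\varepsilon^{1/p}\bigr)\cdot f_\varepsilon^{1/p'}$$
and apply Hölder's inequality on $\RR^{2N}$ with respect to the measure $dx\,dv$, obtaining
$$\int_{\RR^{2N}}\vert v\vert^{\alpha}f_\varepsilon\,dx\,dv\leq\left(\int_{\RR^{2N}}\vert v\vert^{\alpha p}f_\varepsilon\,dx\,dv\right)^{1/p}\Vert f_\varepsilon\Vert_{L^1(\RR^{2N})}^{1/p'}.$$
Forcing $\alpha p=k(\varepsilon)+2$, the three target moments correspond to the specific choices $\alpha=2$, $\alpha=1$ and $\alpha=k(\varepsilon)+1$, which give respectively $p=p_\varepsilon=1+k(\varepsilon)/2$, $p=q_\varepsilon=k(\varepsilon)+2$ and $p=r_\varepsilon=(k(\varepsilon)+2)/(k(\varepsilon)+1)=2-k(\varepsilon)/(1+k(\varepsilon))$, exactly matching the exponents stated in the lemma.

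To convert these pointwise-in-$t$ inequalities into the announced Bochner-type estimates, I would raise both sides to the power $p$ and integrate on $(0,T)$. Using the mass control $\Vert f_\varepsilon(t,\cdot,\cdot)\Vert_{L^1(\RR^{2N})}\leq M_0$ for a.e. $t$ (which follows from the continuity-in-mass identity analogous to \eqref{Totalmass_constant.form}, valid also for the Rayleigh--Helmholtz scaling \eqref{P_epsilon_frictionlimit.form} since the friction term is of the form $\divop_v(\cdots)$ and hence conservative in mass), one arrives after taking the $p$-th root at
$$\Vert \vert v\vert^{\alpha}f_\varepsilon\Vert_{L^{p}(0,T;L^1(\RR^{2N}))}\leq \Vert \vert v\vert^{k(\varepsilon)+2}f_\varepsilon\Vert_{L^1(0,T;L^1(\RR^{2N}))}^{1/p}M_0^{1/p'},$$
which is precisely the three stated inequalities.

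The addenda at the end of the lemma are direct consequences: for the first addendum one applies Hölder in time against the constant $1$ on $(0,T)$, bounds the resulting product by Young's inequality $ab\leq a^{p_\varepsilon}/p_\varepsilon+b^{p_\varepsilon'}/p_\varepsilon'$ with $a=\Vert\vert v\vert^{k(\varepsilon)+2}f_\varepsilon\Vert_{L^1(0,T;L^1)}^{1/p_\varepsilon}$ and $b=(TM_0)^{1/p_\varepsilon'}$, and reads off the constants $2/(2+k(\varepsilon))=1/p_\varepsilon$ and $k(\varepsilon)/(2+k(\varepsilon))=1/p_\varepsilon'$; the second addendum combines the first-moment bound already obtained with the Cauchy--Schwarz interpolation $\Vert\vert v\vert f_\varepsilon\Vert_{L^2(0,T;L^1)}\leq\Vert\vert v\vert^2 f_\varepsilon\Vert_{L^1(0,T;L^1)}^{1/2}\Vert f_\varepsilon\Vert_{L^\infty(0,T;L^1)}^{1/2}$ and then repeats the same Young step. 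No genuine obstacle arises; the only care needed is bookkeeping of the Hölder exponents so that the power of $\vert v\vert$ inside the integral lands exactly on $k(\varepsilon)+2$.
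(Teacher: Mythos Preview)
Your proof is correct and is exactly the standard interpolation argument the paper has in mind; the paper states this lemma without proof, and your H\"older-in-$(x,v)$ splitting $\vert v\vert^{\alpha}f_\varepsilon=(\vert v\vert^{\alpha}f_\varepsilon^{1/p})f_\varepsilon^{1/p'}$ with $\alpha p=k(\varepsilon)+2$, followed by raising to the $p$-th power and integrating in time, is the intended mechanism. The addenda are handled correctly as well (note that the factor $M_0$ in the equality line of the second addendum in the paper's statement is a typo for $M_0^{1/2}$, as your Cauchy--Schwarz route and the subsequent inequality line both confirm).
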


\begin{rem}
Note that we have not considered neither a $L^1$ norm nor a $L^2$ norm with respect to time for the last term $\Vert \vert v\vert^{k(\varepsilon)+1}f_\varepsilon\Vert_{L^{r_\varepsilon}(0,T;L^1(\RR^{2N}))}$. The reason is twofold. 
\begin{enumerate}
\item First, the exponent $r_\varepsilon\nearrow 2$ as $\varepsilon\searrow 0$. Consequently, the natural $L^2$ norm cannot be achieved for each fixed $\varepsilon$ but in the limit $\varepsilon\rightarrow 0$. 
\item Second, one could have obtained a $L^1$ norm with respect to time in the same spirit as in the second order velocity moment since $r_\varepsilon>1$ for small enough $\varepsilon$. However, if we do so, then the coefficient in the Young inequality that comes before $\Vert f_\varepsilon(0)\Vert_{L^1(\RR^{2N})}$ would be $1/(2+k(\varepsilon))$. Although it is obviously bounded with respect to $\varepsilon$, it does not vanish in the limit $\varepsilon\rightarrow 0$ and it is a serious obstruction in order to obtain bounds for the needed scaled velocity moments.
\end{enumerate}
\end{rem}

\begin{pro}\label{Moment_estimates_frictionless.pro}
Let the initial distribution functions $f_\varepsilon^0$ verify (\ref{hypothesis.initial.data.f.form}), the external forces $-\nabla\psi_\varepsilon$ fulfil (\ref{hypothesis.initial.data.F.form}) and consider the strong global in time solution $f_\varepsilon$ to (\ref{P_epsilon_frictionlimit.form}) with initial data $f_\varepsilon^0$ and $\lambda\in (0,N/2)$. Then,
\begin{align*}
\frac{1}{\varepsilon^{2\gamma}}\int_0^T\int_{\RR^{2N}}\vert v\vert^{k(\varepsilon)+2}&f_\varepsilon\,dx\,dv\,dt+\frac{1}{\varepsilon^{2\gamma}}\int_0^T\int_{\RR^{4N}}\phi_\varepsilon(\vert x-y\vert)\vert v-w\vert^2f_\varepsilon(t,x,v)f_\varepsilon(t,y,w)\,dx\,dy\,dv\,dw\\
&\leq \frac{\alpha(\varepsilon)}{\varepsilon^{2\gamma}}\int_0^T\int_{\RR^{2N}}\vert v\vert^2 f_\varepsilon\,dx\,dv\,dt+\varepsilon^{1-\gamma}\int_{\RR^{2N}}\vert v\vert^2 f_\varepsilon(0)\,dx\,dv\\
&\hspace{0.3cm}-\frac{1}{\varepsilon^\gamma}\int_0^T\int_{\RR^{2N}}v\cdot\nabla_x\psi_\varepsilon\,f_\varepsilon\,dx\,dv\,dt+T\int_{\RR^{2N}}f_\varepsilon(0)\,dx\,dv.
\end{align*}
\end{pro}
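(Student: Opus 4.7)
The plan is to obtain the estimate by testing the scaled kinetic equation \eqref{P_epsilon_frictionlimit.form} against $\vert v\vert^2$, integrating over $(x,v)\in\mathbb{R}^{2N}$, and carefully handling each of the four terms in the Fokker--Planck--Cucker--Smale operator. Up to a division by $\varepsilon^{2\gamma}$ and dropping a nonnegative contribution at $t=T$, this computation produces all four terms on the right hand side of the claim.

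First, I would dispose of the easy transport and force terms. The free transport contribution $\varepsilon\int \vert v\vert^2 v\cdot\nabla_x f_\varepsilon\,dx\,dv$ vanishes after integration by parts in $x$ thanks to the decay of $f_\varepsilon$. The external force contribution integrated by parts in $v$ produces
\[
-\varepsilon^\gamma\int_{\mathbb{R}^{2N}} \vert v\vert^2 \nabla_x\psi_\varepsilon\cdot\nabla_v f_\varepsilon\,dx\,dv
= 2\varepsilon^\gamma\int_{\mathbb{R}^{2N}} v\cdot\nabla_x\psi_\varepsilon\, f_\varepsilon\,dx\,dv,
\]
which, after moving to the right hand side and dividing by $\varepsilon^{2\gamma}$, yields the $-\tfrac{1}{\varepsilon^\gamma}$ force term.

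Next, the divergence terms on the right hand side of \eqref{P_epsilon_frictionlimit.form} are each integrated by parts in $v$. The nonlinear friction produces
\[
\int_{\mathbb{R}^{2N}} \vert v\vert^2\,\divop_v\big((\vert v\vert^{k(\varepsilon)}-\alpha(\varepsilon))v f_\varepsilon\big)\,dx\,dv
= -2\int_{\mathbb{R}^{2N}} \vert v\vert^{k(\varepsilon)+2} f_\varepsilon\,dx\,dv + 2\alpha(\varepsilon)\int_{\mathbb{R}^{2N}} \vert v\vert^2 f_\varepsilon\,dx\,dv,
\]
which provides the gain term $\int \vert v\vert^{k(\varepsilon)+2}f_\varepsilon$ on the left and the friction term $\alpha(\varepsilon)\int\vert v\vert^2 f_\varepsilon$ on the right. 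The velocity diffusion gives the $L^1$ mass contribution, $\varepsilon^{2\gamma}\int\vert v\vert^2\Delta_v f_\varepsilon\,dv = 2N\varepsilon^{2\gamma}\int f_\varepsilon\,dv$, which after integrating in time and dividing by $\varepsilon^{2\gamma}$ matches the final term $T\int f_\varepsilon(0)\,dx\,dv$ (up to the benign combinatorial constant $2N$, since $\int f_\varepsilon = \int f_\varepsilon(0)$ by mass conservation \eqref{MomentumEquation-mass.limitfriction.form}).

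The key step, which is also the main technical obstacle, is the treatment of the Cucker--Smale operator. Proceeding as in the derivation of Corollary \ref{Moments_bounds.cor} in the case $k=2$, I would symmetrize by exchanging the dummy variables $(x,v)\leftrightarrow(y,w)$:
\begin{align*}
-2\int_{\mathbb{R}^{2N}} v\cdot Q_{CS}^{\phi_\varepsilon}(f_\varepsilon,f_\varepsilon)\,dx\,dv
&= -2\iint_{\mathbb{R}^{4N}} v\cdot(v-w)\,\phi_\varepsilon(\vert x-y\vert) f_\varepsilon(x,v)f_\varepsilon(y,w)\,dx\,dy\,dv\,dw\\
&= -\iint_{\mathbb{R}^{4N}} \vert v-w\vert^2\,\phi_\varepsilon(\vert x-y\vert) f_\varepsilon(x,v)f_\varepsilon(y,w)\,dx\,dy\,dv\,dw,
\end{align*}
which is precisely the dissipation of kinetic energy due to alignment interactions. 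Finally, integrating the resulting balance law in time from $0$ to $T$, discarding the nonpositive contribution $-\varepsilon^{1+\gamma}\int\vert v\vert^2 f_\varepsilon(T,\cdot,\cdot)\,dx\,dv$ from the Barrow formula (which leaves only the initial value, giving rise to the factor $\varepsilon^{1-\gamma}\int \vert v\vert^2 f_\varepsilon(0)$ after division by $\varepsilon^{2\gamma}$), and collecting all contributions yields the desired inequality. The main delicacy is keeping track of the various $\varepsilon$-powers and ensuring that, under either of the assumptions \eqref{Hypothesis.k.alpha.hyperbolic} or \eqref{Hypothesis.k.alpha.intermediate}, every right hand side term will eventually be controllable via Lemma \ref{interpolation.moments.frictionlimit.lem} together with the Cauchy--Schwarz and Young inequalities; absorbing the terms involving $\int \vert v\vert^2 f_\varepsilon$ into the left hand side via Lemma \ref{interpolation.moments.frictionlimit.lem} is exactly what closes the bound.
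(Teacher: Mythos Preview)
Your approach is correct and coincides with the paper's implicit method: the proposition is nothing but the $k=2$ case of the moment identity in Proposition~\ref{Momenta_equations.pro} (adapted to the nonlinear friction $(\vert v\vert^{k(\varepsilon)}-\alpha(\varepsilon))v$), integrated in time and divided by $\varepsilon^{2\gamma}$, exactly as you describe. The only minor remark is that the closing absorption step via Lemma~\ref{interpolation.moments.frictionlimit.lem} that you mention at the end is in fact the content of Corollary~\ref{estimate.higher.moment.frictionlimit.cor}, not of the present proposition, which merely records the raw energy balance (with harmless numerical constants such as the $2N$ you flag).
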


Now, one can apply apply the estimates of the $L^1$ and $L^2$ norms of the second and first order velocity moments above to obtain the next inequality.

\begin{cor}\label{estimate.higher.moment.frictionlimit.cor}
Under the assumptions of Proposition \ref{Moment_estimates_frictionless.pro}, the next property holds
\begin{multline*}
\left[1-\left(\alpha(\varepsilon)+\frac{1}{2}\right)\frac{2}{2+k(\varepsilon)}\right]\frac{1}{\varepsilon^{2\gamma}}\Vert \vert v\vert^{k(\varepsilon)+2}f_\varepsilon\Vert_{L^1(0,T;L^1(\RR^{2N}))}\\
+\frac{1}{\varepsilon^{2\gamma}}\int_0^T\int_{\RR^{4N}}\phi_\varepsilon(\vert x-y\vert)\vert v-w\vert^2f_\varepsilon(t,x,v)f_\varepsilon(t,y,w)\,dx\,dy\,dv\,dw\\
\leq 2\varepsilon^{1-\gamma}E_0+\left[\frac{1}{\varepsilon^{2\gamma}}\left(\alpha(\varepsilon)+\frac{1}{2}\right)\frac{k(\varepsilon)}{2+k(\varepsilon)}+1\right]TM_0+\frac{1}{2}M_0F_0^2.
\end{multline*}
\end{cor}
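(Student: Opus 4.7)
The plan is to start from the inequality in Proposition \ref{Moment_estimates_frictionless.pro} and process each of the four terms on its right-hand side, so that they are either bounded by $\varepsilon$-independent constants or produce a fraction of $\Vert\vert v\vert^{k(\varepsilon)+2}f_\varepsilon\Vert_{L^1(0,T;L^1(\RR^{2N}))}/\varepsilon^{2\gamma}$ that can be absorbed into the left-hand side. The initial-data term is controlled via $(H_1)$, the mass term via $\Vert f_\varepsilon(0)\Vert_{L^1}\le M_0$, the friction term via the first interpolation estimate of Lemma \ref{interpolation.moments.frictionlimit.lem}, and the external-force term via Cauchy--Schwarz in time, Young's inequality and the second interpolation estimate of the same lemma.

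The nontrivial step is the force term $-\varepsilon^{-\gamma}\int v\cdot\nabla_x\psi_\varepsilon f_\varepsilon$. Using hypothesis $(H_2)$ and Cauchy--Schwarz in time it is first bounded by $\varepsilon^{-\gamma}F_0\Vert\vert v\vert f_\varepsilon\Vert_{L^2(0,T;L^1(\RR^{2N}))}$. I then apply the weighted Young inequality $ab\le a^2/(2M_0)+M_0 b^2/2$ with $a=\Vert\vert v\vert f_\varepsilon\Vert_{L^2(0,T;L^1)}/\varepsilon^\gamma$ and $b=F_0$, producing the harmless term $M_0 F_0^2/2$ plus $(2M_0\varepsilon^{2\gamma})^{-1}\Vert\vert v\vert f_\varepsilon\Vert_{L^2(0,T;L^1)}^2$. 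The weight $M_0$ is chosen precisely so that the factor $M_0$ appearing in the squared interpolation inequality $\Vert\vert v\vert f_\varepsilon\Vert_{L^2}^{2}\le M_0\bigl(\tfrac{2}{2+k(\varepsilon)}\Vert\vert v\vert^{k(\varepsilon)+2}f_\varepsilon\Vert_{L^1}+\tfrac{k(\varepsilon)}{2+k(\varepsilon)}TM_0\bigr)$ cancels, leaving a clean coefficient $\tfrac{1}{2}\cdot\tfrac{2}{2+k(\varepsilon)}$ in front of the critical moment on the right.

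Adding the friction contribution $\alpha(\varepsilon)\varepsilon^{-2\gamma}\Vert\vert v\vert^2 f_\varepsilon\Vert_{L^1(0,T;L^1)}$ (bounded in the same convex form by the first interpolation inequality, giving a coefficient $\alpha(\varepsilon)\tfrac{2}{2+k(\varepsilon)}$) to the force contribution, the total coefficient appearing in front of $\Vert\vert v\vert^{k(\varepsilon)+2}f_\varepsilon\Vert_{L^1}/\varepsilon^{2\gamma}$ on the right-hand side is exactly $(\alpha(\varepsilon)+\tfrac12)\tfrac{2}{2+k(\varepsilon)}$. Transposing this quantity to the left-hand side produces the stated factor $1-(\alpha(\varepsilon)+\tfrac12)\tfrac{2}{2+k(\varepsilon)}$. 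The corresponding $TM_0$ remainders from both interpolation estimates sum to $(\alpha(\varepsilon)+\tfrac12)\tfrac{k(\varepsilon)}{2+k(\varepsilon)}\tfrac{TM_0}{\varepsilon^{2\gamma}}$; combined with the unscaled $TM_0$ from the last term of Proposition \ref{Moment_estimates_frictionless.pro}, this yields the bracket $[(\alpha(\varepsilon)+\tfrac12)\tfrac{k(\varepsilon)}{(2+k(\varepsilon))\varepsilon^{2\gamma}}+1]TM_0$ on the right.

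The only real difficulty is the bookkeeping in the Young step: one must synchronize the weight with the $M_0$ appearing in the interpolation estimate so that the resulting coefficient of the critical moment coincides exactly with the shift that combines with the friction term. Everything else is algebraic rearrangement followed by the observation that the coefficient $1-(\alpha(\varepsilon)+\tfrac12)\tfrac{2}{2+k(\varepsilon)}$ tends to $\tfrac12$ as $\varepsilon\to 0$ under either hypothesis \eqref{Hypothesis.k.alpha.hyperbolic} or \eqref{Hypothesis.k.alpha.intermediate}, so that the estimate is effective for all sufficiently small $\varepsilon$.
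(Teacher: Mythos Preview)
Your proposal is correct and follows exactly the route the paper indicates: the paper's proof is the single sentence ``one can apply the estimates of the $L^1$ and $L^2$ norms of the second and first order velocity moments above to obtain the next inequality,'' and you have carried out precisely that computation, applying the two interpolation inequalities of Lemma~\ref{interpolation.moments.frictionlimit.lem} to the four right-hand-side terms of Proposition~\ref{Moment_estimates_frictionless.pro} and absorbing the resulting $(\alpha(\varepsilon)+\tfrac12)\tfrac{2}{2+k(\varepsilon)}$ multiple of the critical moment into the left. The weighted Young inequality you use for the force term is exactly what is needed to make the coefficients match.
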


For $\varepsilon$ small enough one can obtain a lower estimate of the first factor as follows
$$\left[1-\left(\alpha(\varepsilon)+\frac{1}{2}\right)\frac{2}{2+k(\varepsilon)}\right]\geq \frac{1}{4}.$$
Consequently, the preceding results yield the next list of estimates for the scaled velocity moments under consideration.
\begin{cor}\label{estimates.all.moments.fritionlimit.cor}
Let the initial distribution functions $f_\varepsilon^0$ verify (\ref{hypothesis.initial.data.f.form}), the external forces $-\nabla\psi_\varepsilon$ fulfill (\ref{hypothesis.initial.data.F.form}), consider the strong global in time solution $f_\varepsilon$ to (\ref{P_epsilon_frictionlimit.form}) with initial data $f_\varepsilon^0$ and $\lambda\in (0,N/2)$ and assume the hypothesis (\ref{Hypothesis.k.alpha.hyperbolic})--(\ref{Hypothesis.k.alpha.intermediate}).  Then, there exists some constant $C>0$ that does nor depend on $\varepsilon$ such that
\begin{align*}
\frac{1}{\varepsilon^{2\gamma}}\Vert \vert v\vert^{k(\varepsilon)+2} f_\varepsilon\Vert_{L^1(0,T;L^1(\RR^{2N}))}&\leq C,\\
\frac{1}{\varepsilon^\gamma}\Vert \vert v\vert^{k(\varepsilon)+1} f_\varepsilon\Vert_{L^{r_\varepsilon}(0,T;L^1(\RR^{2N})}&\leq C,\\
\frac{1}{\varepsilon^{2\gamma}}\Vert \vert v\vert^2 f_\varepsilon\Vert_{L^1(0,T;L^1(\RR^{2N}))}&\leq C,\\
\frac{1}{\varepsilon^\gamma}\Vert \vert v\vert f_\varepsilon\Vert_{L^2(0,T;L^1(\RR^{2N}))}&\leq C,
\end{align*}
and 
$$\frac{1}{\varepsilon^{2\gamma}}\int_0^T\int_{\RR^{4N}}\phi_\varepsilon(\vert x-y\vert)\vert v-w\vert^2f_\varepsilon(t,x,v)f_\varepsilon(t,y,w)\,dx\,dy\,dv\,dw\leq C,$$
where $r_\varepsilon$ agrees with the same exponent in Lemma \ref{interpolation.moments.frictionlimit.lem}.
\end{cor}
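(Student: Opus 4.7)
The plan is to use Corollary \ref{estimate.higher.moment.frictionlimit.cor} as the single hard input and then to propagate its conclusion through the interpolation inequalities of Lemma \ref{interpolation.moments.frictionlimit.lem} to recover the remaining four bounds. First I would check that the prefactor $[1-(\alpha(\varepsilon)+\frac{1}{2})\frac{2}{2+k(\varepsilon)}]$ multiplying $\varepsilon^{-2\gamma}\Vert\vert v\vert^{k(\varepsilon)+2}f_\varepsilon\Vert_{L^1(0,T;L^1)}$ in that corollary tends to $1/2$ as $\varepsilon\to 0$ (since both $\alpha(\varepsilon)$ and $k(\varepsilon)$ vanish under either (\ref{Hypothesis.k.alpha.hyperbolic}) or (\ref{Hypothesis.k.alpha.intermediate})), hence is bounded below by $1/4$ for sufficiently small $\varepsilon$, as the paper already observes. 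Next I would verify that every summand on the right-hand side is uniformly bounded in $\varepsilon$: the term $2\varepsilon^{1-\gamma}E_0$ is harmless because $\gamma\in[0,1]$, the term $\frac{1}{2}M_0F_0^2$ is $\varepsilon$-independent, and the only delicate factor is $\varepsilon^{-2\gamma}(\alpha(\varepsilon)+\frac{1}{2})\frac{k(\varepsilon)}{2+k(\varepsilon)}$, which is controlled precisely by the quotient $k(\varepsilon)/\varepsilon^{2\gamma}$. For $\gamma=0$ this reduces to $k(\varepsilon)=o(1)$, while for $\gamma\in(0,1]$ the condition $k(\varepsilon)=O(\varepsilon^{2\gamma})$ is exactly what is needed. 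Dividing through by the $1/4$ lower bound then yields both the first and the last (dissipation) estimates at once.

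Once the scaled $(k(\varepsilon)+2)$-th moment is controlled, the remaining bounds follow mechanically from Lemma \ref{interpolation.moments.frictionlimit.lem}. The inequality
\begin{equation*}
\Vert\vert v\vert^2 f_\varepsilon\Vert_{L^1(0,T;L^1)}\leq \frac{2}{2+k(\varepsilon)}\Vert\vert v\vert^{k(\varepsilon)+2}f_\varepsilon\Vert_{L^1(0,T;L^1)}+\frac{k(\varepsilon)}{2+k(\varepsilon)}TM_0
\end{equation*}
from that lemma, divided by $\varepsilon^{2\gamma}$, gives the third bound of the corollary, again using that $k(\varepsilon)/\varepsilon^{2\gamma}$ is bounded. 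The bound on $\varepsilon^{-\gamma}\Vert\vert v\vert f_\varepsilon\Vert_{L^2(0,T;L^1)}$ is then an immediate consequence of the Cauchy--Schwartz-type inequality $\Vert\vert v\vert f_\varepsilon\Vert_{L^2(0,T;L^1)}\leq M_0^{1/2}\Vert\vert v\vert^2 f_\varepsilon\Vert_{L^1(0,T;L^1)}^{1/2}$ also contained in Lemma \ref{interpolation.moments.frictionlimit.lem}. Finally, for the $L^{r_\varepsilon}$ estimate I would use the third inequality of that lemma and track exponents: since $1/r_\varepsilon=(1+k(\varepsilon))/(2+k(\varepsilon))$, dividing by $\varepsilon^\gamma$ leaves a net power $\varepsilon^{\gamma k(\varepsilon)/(2+k(\varepsilon))}$, which is nonnegative and tends to $1$ as $\varepsilon\to 0$, hence is uniformly bounded.

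The only step with genuine content is the first one, and the main obstacle is precisely the sharp compatibility between the scaling of the friction exponent $k(\varepsilon)$ and the velocity-diffusion scale $\varepsilon^{2\gamma}$. If $k(\varepsilon)/\varepsilon^{2\gamma}$ were unbounded, the bracketed term on the right-hand side of Corollary \ref{estimate.higher.moment.frictionlimit.cor} would blow up and the entire chain of deductions would collapse, so one should view hypotheses (\ref{Hypothesis.k.alpha.hyperbolic})--(\ref{Hypothesis.k.alpha.intermediate}) not as technical conveniences but as essentially the sharp requirement that makes the nonlinear Rayleigh--Helmholtz-type friction degenerate to the linear one in a compatible way with the chosen hyperbolic/intermediate scaling.
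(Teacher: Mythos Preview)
Your proposal is correct and follows essentially the same route as the paper: use Corollary~\ref{estimate.higher.moment.frictionlimit.cor} together with the observed $1/4$ lower bound on the prefactor to obtain the first and the dissipation estimates, then feed these into the interpolation inequalities of Lemma~\ref{interpolation.moments.frictionlimit.lem} to recover the remaining three. Your handling of the $L^{r_\varepsilon}$ bound is in fact slightly cleaner than the paper's, which inserts an extra Young inequality before observing that the residual power $\varepsilon^{\gamma k(\varepsilon)/(1+k(\varepsilon))}$ is bounded; you compute the net exponent $\gamma k(\varepsilon)/(2+k(\varepsilon))$ directly from $1/r_\varepsilon=(1+k(\varepsilon))/(2+k(\varepsilon))$ and note it is nonnegative, which suffices.
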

\begin{proof}
All the estimates obviously follows from Lemma \ref{interpolation.moments.frictionlimit.lem} and Corollary \ref{estimate.higher.moment.frictionlimit.cor}. Let us just sketch the proof of the second one which is less apparent according to the moment relations in Lemma \ref{interpolation.moments.frictionlimit.lem}. Such result shows that
$$\frac{1}{\varepsilon^\gamma}\Vert \vert v\vert^{k(\varepsilon)+1}f_\varepsilon\Vert_{L^{r_\varepsilon}(0,T;L^1(\RR^{2N})}\leq\frac{\varepsilon^{2\gamma/r_\varepsilon}}{\varepsilon^\gamma}\left(\frac{1}{\varepsilon^{2\gamma}}\Vert \vert v\vert^{k(\varepsilon)+2}f_\varepsilon\Vert_{L^1(0,T;L^1(\RR^{2N})}\right)^{1/{r_\varepsilon}}M_0^{1/{r_\varepsilon'}}.$$
Now, one can apply the Young inequality for real numbers once more to obtain the next bound
$$
\frac{1}{\varepsilon^\gamma}\Vert \vert v\vert^{k(\varepsilon)+1}f_\varepsilon\Vert_{L^{r_\varepsilon}(0,T;L^1(\RR^{2N})}\leq \varepsilon^{\gamma\frac{k(\varepsilon)}{1+k(\varepsilon)}}\left\{\frac{1+k(\varepsilon)}{2+k(\varepsilon)}\frac{1}{\varepsilon^{2\gamma}}\Vert \vert v\vert^{k(\varepsilon)+2}f_\varepsilon\Vert_{L^1(0,T;L^1(\RR^{2N}))}+\frac{1}{2+k(\varepsilon)}M_0\right\}.
$$
Note now that
$$\varepsilon^{\gamma\frac{k(\varepsilon)}{1+k(\varepsilon)}}=\exp\left({\frac{\gamma}{1+k(\varepsilon)}k(\varepsilon)\log\varepsilon}\right)\rightarrow 1\ \mbox{ when }\varepsilon\rightarrow 0,$$
because we are assuming $k=O(\varepsilon^{2\gamma})$. In particular, note that one cannot say that such scaled momentum converges to zero as $\varepsilon\rightarrow 0$ because the above factor, although bounded, does not converges to zero.
\end{proof}
The above results show that one can consider weak-star limits for $\rho_\varepsilon$ and $\frac{j_\varepsilon}{\varepsilon^\gamma}$ in the next sense:
\begin{align*}
\rho_\varepsilon&\overset{*}{\rightharpoonup} \rho\hspace{0.25cm}\mbox{in }L^\infty(0,T;\mathcal{M}(\RR^N)),\\
\frac{j_\varepsilon}{\varepsilon^\gamma}&\overset{*}{\rightharpoonup} j \hspace{0.25cm} \mbox{in }L^2(0,T;\mathcal{M}(\RR^N))^N,
\end{align*}
thus leading to the limiting continuity equation in the sense of distributions
$$\frac{\partial \rho}{\partial t}+\divop j=0.$$
Indeed, one can repeat the same idea as in the preceding sections to ensure (thanks to the continuity equation (\ref{MomentumEquation-mass.limitfriction.form})) that
$$\rho_\varepsilon\rightarrow \rho\hspace{0.25cm}\mbox{in }C([0,T],\mathcal{M}(\RR^N)-weak*),$$
and, in particular, $\rho(0,\cdot)=\rho^0$. Recall that the balance law for the scaled current now takes the form
$$\varepsilon^{1+\gamma}\frac{\partial}{\partial t}\left(\frac{j_\varepsilon}{\varepsilon^\gamma}\right)+\varepsilon^{1-\gamma}\divop_x\mathcal{S}_\varepsilon+\rho_\varepsilon \nabla_x\psi_\varepsilon -\left(\frac{1}{\varepsilon^\gamma}q_\varepsilon^{k(\varepsilon)+1}-\alpha(\varepsilon)\frac{j_\varepsilon}{\varepsilon^\gamma}\right)+(\phi_\varepsilon*\rho_\varepsilon)\frac{j_\varepsilon}{\varepsilon^\gamma}-\left(\phi_\varepsilon*\frac{j_\varepsilon}{\varepsilon^\gamma}\right)\rho_\varepsilon=0,$$
or, in weak form,
\begin{multline*}
-\varepsilon^{1+\gamma}\int_0^T\int_{\RR^N}\frac{\partial \varphi}{\partial t}\cdot \frac{j_\varepsilon}{\varepsilon^\gamma}\,dx\,dt-\varepsilon^{1-\gamma}\int_0^T\int_{\RR^N}\jac \varphi:\mathcal{S}_\varepsilon\,dx\,dt\\
+\int_0^T\int_{\RR^N}\rho_\varepsilon\nabla_x\psi_\varepsilon\cdot \varphi\,dx\,dt-\int_0^T\int_{\RR^N}\frac{q_\varepsilon^{k(\varepsilon)+1}}{\varepsilon^\gamma}\cdot \varphi\,dx\,dt+\alpha(\varepsilon)\int_0^T\int_{\RR^N}\frac{j_\varepsilon}{\varepsilon^\gamma}\cdot \varphi\,dx\,dt\\
-\frac{1}{2}\int_0^T\int_{\RR^N}\int_{\RR^N}H_\varphi^{\lambda,\varepsilon}(t,x,y)\cdot\left(\rho_\varepsilon(t,x)\frac{j_\varepsilon}{\varepsilon^\gamma}(t,y)-\rho_\varepsilon(t,y)\frac{j_\varepsilon}{\varepsilon^\gamma}(t,y)\right)\,dx\,dy\,dt=0,
\end{multline*}
for any $\varphi\in C^1_0([0,T];C^1_0(\RR^N))^N$. In all the intermediate hyperbolic cases one can pass to the limit in each term (including the commutator) by virtue of Corollary \ref{estimates.all.moments.fritionlimit.cor}. Recall that it holds for all the parameters in the range $\lambda\in (0,1/2]$. Again, the endpoint case $\lambda=1/2$ has to be considered separately because of the concentration issues of the term $\rho\otimes j-j\otimes \rho$ and the only term that requires a special analysis is the forth term one, i.e.,
$$\int_0^T\int_{\RR^N}\frac{q_\varepsilon^{k(\varepsilon)+1}}{\varepsilon^\gamma}\cdot \varphi\,dx\,dt.$$
Moreover, the first, second and fifth terms vanishes when $\varepsilon\rightarrow 0$. The next result shows that one can indeed pass to the limit in the forth term and identify it in terms of the limit of $j_\varepsilon/\varepsilon^\gamma$, i.e., $j$.

\begin{theo}\label{convergence.friction.frictionlimit.theo}
Let the initial distribution functions $f_\varepsilon^0$ verify (\ref{hypothesis.initial.data.f.form}), the external forces $-\nabla\psi_\varepsilon$ fulfil (\ref{hypothesis.initial.data.F.form}), consider the strong global in time solution $f_\varepsilon$ to (\ref{P_epsilon_frictionlimit.form}) with initial data $f_\varepsilon^0$ and $\lambda\in (0,N/2)$ and assume the hypothesis (\ref{Hypothesis.k.alpha.hyperbolic})--(\ref{Hypothesis.k.alpha.intermediate}). Then,
$$\lim_{\varepsilon\rightarrow 0}\frac{1}{\varepsilon^\gamma}\left\Vert \vert \vert v\vert^{k(\varepsilon)}-1\vert\,\vert v\vert f_\varepsilon\right\Vert_{L^p(0,T;L^1(\RR^{2N}))}=0,$$
for every exponent $1\leq p<2$.
\end{theo}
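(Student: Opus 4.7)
The plan is to split the velocity integration into a bounded region $\{|v|\le R\}$, where the smallness comes from the pointwise convergence $|v|^{k(\varepsilon)}\to 1$, and an unbounded tail $\{|v|>R\}$ with $R\ge 1$, where the factor $||v|^{k(\varepsilon)}-1|\,|v|$ is dominated by $2|v|^{k(\varepsilon)+1}$ and absorbed into the velocity moment bounds of Corollary \ref{estimates.all.moments.fritionlimit.cor}. Writing $h_\varepsilon(t,x,v):=||v|^{k(\varepsilon)}-1|\,|v|\,f_\varepsilon$, the goal is to show that $\Vert h_\varepsilon\Vert_{L^p(0,T;L^1(\RR^{2N}))}/\varepsilon^\gamma\to 0$, which I will obtain by combining an $L^1$ and an $L^{r_\varepsilon}$ time estimate via Lyapunov interpolation.

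On $\{|v|\le R\}$, the elementary inequality $|e^x-1|\le |x|\,e^{|x|}$ applied with $x=k(\varepsilon)\log|v|$, together with the bound $|v|\,|\log|v||\le 1/e$ on $(0,1]$, yields $||v|^{k(\varepsilon)}-1|\,|v|\le C_R\,k(\varepsilon)$ uniformly, for a constant $C_R$ depending only on $R$ as soon as $k(\varepsilon)\le 1$. Integrating against $f_\varepsilon$ gives $\Vert h_\varepsilon\,\mathbf{1}_{|v|\le R}\Vert_{L^q(0,T;L^1)}\le C_R\,k(\varepsilon)\,T^{1/q}M_0$ for any $q\ge 1$. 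On $\{|v|>R\}$, the trivial bound $||v|^{k(\varepsilon)}-1|\le 2|v|^{k(\varepsilon)}$ gives $h_\varepsilon\,\mathbf{1}_{|v|>R}\le 2|v|^{k(\varepsilon)+1}f_\varepsilon$, and the further inequality $|v|^{k(\varepsilon)+1}\le |v|^{k(\varepsilon)+2}/R$ leads to
\[
\Vert h_\varepsilon\,\mathbf{1}_{|v|>R}\Vert_{L^1(0,T;L^1)}\le \frac{2}{R}\Vert |v|^{k(\varepsilon)+2}f_\varepsilon\Vert_{L^1(0,T;L^1)},\qquad \Vert h_\varepsilon\,\mathbf{1}_{|v|>R}\Vert_{L^{r_\varepsilon}(0,T;L^1)}\le 2\Vert |v|^{k(\varepsilon)+1}f_\varepsilon\Vert_{L^{r_\varepsilon}(0,T;L^1)},
\]
of respective orders $\varepsilon^{2\gamma}/R$ and $\varepsilon^\gamma$ by Corollary \ref{estimates.all.moments.fritionlimit.cor}.

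To pass to $L^p(0,T;L^1)$ with $1\le p<2$, I apply the Lyapunov interpolation $\Vert h_\varepsilon\Vert_{L^p}\le \Vert h_\varepsilon\Vert_{L^1}^{\theta_\varepsilon}\Vert h_\varepsilon\Vert_{L^{r_\varepsilon}}^{1-\theta_\varepsilon}$, with exponent $\theta_\varepsilon=(1/p-1/r_\varepsilon)/(1-1/r_\varepsilon)\to 2/p-1\in(0,1]$, which is admissible since $r_\varepsilon\nearrow 2$ forces $p<r_\varepsilon$ for $\varepsilon$ small. Assembling the previous estimates, $\Vert h_\varepsilon\Vert_{L^p(0,T;L^1)}/\varepsilon^\gamma$ is bounded by the sum of a bounded-region part $C_R\,k(\varepsilon)\,T^{1/p}M_0/\varepsilon^\gamma$ and a tail part $C(1/R)^{\theta_\varepsilon}\varepsilon^{\gamma\theta_\varepsilon}$. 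Under (\ref{Hypothesis.k.alpha.intermediate}) both go to zero for each fixed $R$ (via $k(\varepsilon)/\varepsilon^\gamma=O(\varepsilon^\gamma)$ and $\varepsilon^{\gamma\theta_\varepsilon}\to 0$); under (\ref{Hypothesis.k.alpha.hyperbolic}) the first is $o(1)$ and the second is $(C/R)^{\theta_\varepsilon}$, which is made arbitrarily small by choosing $R$ large first.

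The main obstacle is the hyperbolic case $\gamma=0$, where the scaling yields no intrinsic smallness in $\varepsilon$ in the tail: the entire gain must come from the factor $1/R$, while the bounded-region bound is paid for by a constant $C_R$ that blows up with $R$. One has to couple the two limits in the correct order (first $R$ large, then $\varepsilon\to 0$) and verify that $\theta_\varepsilon$ stays bounded away from zero as $\varepsilon\to 0$ so that the gain $(1/R)^{\theta_\varepsilon}$ is effective.
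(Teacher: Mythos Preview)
Your proof is correct and takes a genuinely different route from the paper's. The paper does not cut off in $|v|$; instead it writes
\[
|v|^{k(\varepsilon)}-1=\int_0^{k(\varepsilon)}\log|v|\,|v|^{\theta}\,d\theta,
\]
and then controls $|\log r|\,r^{\theta+1}$ by a sum $\frac{1}{((1-\alpha)+\theta)e}r^{\alpha}+\frac{1}{(\beta-\theta)e}r^{1+\beta}$ with free parameters $\alpha\in[0,1)$, $\beta\in(0,1]$. Integrating in $\theta$ produces logarithmic factors $\log(1+k(\varepsilon)/(1-\alpha))$ and $-\log(1-k(\varepsilon)/\beta)$, which carry all the smallness; the remaining moments $\||v|^{\alpha}f_\varepsilon\|$ and $\||v|^{1+\beta}f_\varepsilon\|$ are handled by H\"older and Corollary~\ref{estimates.all.moments.fritionlimit.cor}. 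The choice of $\beta$ then dictates the time exponent $2/(1+\beta)$, which sweeps $[1,2)$.

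Your argument replaces this analytic splitting by a geometric one (the ball $\{|v|\le R\}$ versus its complement) and replaces the $\theta$-integral by the simpler pointwise bound $||v|^{k(\varepsilon)}-1|\,|v|\le C_R\,k(\varepsilon)$. This is more elementary and avoids the logarithmic-moment machinery. The price you pay is the extra parameter $R$: in the intermediate case $\gamma>0$ both contributions vanish for any fixed $R$, but in the hyperbolic case $\gamma=0$ the tail yields only $(C/R)^{\theta_\varepsilon}$ and you must choose $R$ large before sending $\varepsilon\to 0$, checking that $\theta_\varepsilon\to 2/p-1>0$ stays bounded below. You handle this correctly. The paper's approach avoids this coupled limit entirely, since the logarithmic factors already vanish with $k(\varepsilon)$ uniformly in $\gamma$.
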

\begin{proof}
We start by restating the difference $\vert v\vert^{k(\varepsilon)}-1$ though the integral version of the mean value theorem as follows
$$\vert v\vert^{k(\varepsilon)}-1=\int_0^{k(\varepsilon)}\frac{d}{d\theta}\vert v\vert^\theta\,d\theta=\int_0^{k(\varepsilon)}\log\vert v\vert\,\vert v\vert^\theta\,d\theta.$$
As a consequence,
$$\frac{1}{\varepsilon^\gamma}\int_{\RR^{2N}}\vert \vert v\vert^{k(\varepsilon)}-1\vert\,\vert v\vert f_\varepsilon\,dx\,dv\leq \frac{1}{\varepsilon^\gamma}\int_0^{k(\varepsilon)}\int_{\RR^{2N}}\left\vert \log\vert v\vert\right\vert\,\vert v\vert^{\theta+1} f_\varepsilon\,dx\,dv\,d\theta.$$
Then, all our efforts must be conducted to bound logarithmic velocity moments of the particle distribution. This will be done by controlling them in terms of the already known bounds for the velocity moments in Corollary \ref{estimates.all.moments.fritionlimit.cor}. Then, let us recall that for any couple of positive exponent $a,b$, the asymptotic behavior of the logarithm near $r=0$ and $r=\infty$ is explicitly given by
$$
\vert\log r\vert\leq \left\{
\begin{array}{ll}
\displaystyle\frac{1}{a e}r^{-a}, & \displaystyle r\in (0,1],\\
\displaystyle\frac{1}{b e} r^b, & \displaystyle r\in [1,+\infty).
\end{array}
\right.
$$
In a more compact (although less sharp) way
$$\vert \log r\vert\,r^{\theta +1}\leq \frac{1}{a e}r^{\theta+1-a}+\frac{1}{b e}r^{\theta+1+b},$$
for every $\theta\in (0,k(\varepsilon))$ and every $a,b>0$. Fix now any couple of exponents $\alpha\in [0,1)$ and $\beta\in (0,1]$. Note that when $\varepsilon$ is small enough, then $0<k(\varepsilon)<\beta$ and this allows choosing $a=a(\theta)$ and $b=b(\theta)$ as follows
$$a=(1-\alpha)+\theta,\hspace{0.5cm}b=\beta-\theta.$$
Such choice amounts to
\begin{equation}\label{logarithm.bounds.eq}
\vert \log r\vert r^{\theta+1}\leq \frac{1}{((1-\alpha)+\theta)e}r^\alpha+\frac{1}{(\beta-\theta)e}r^{1+\beta},
\end{equation}
for every $r>0$. Thus, our integral can be split into two terms
$$\frac{1}{\varepsilon^\gamma}\int_{\RR^{2N}}\vert \vert v\vert^{k(\varepsilon)}-1\vert\,\vert v\vert f_\varepsilon\,dx\,dv\leq F_\varepsilon(t)+G_\varepsilon(t),$$
where
\begin{align*}
F_\varepsilon(t)&:=\frac{1}{\varepsilon^\gamma}\int_0^{k(\varepsilon)}\frac{1}{((1-\alpha)+\theta)e}\int_{\RR^{2N}}\vert v\vert^\alpha f_\varepsilon\,dx\,dv\,d\theta,\\
&\ =\frac{1}{\varepsilon^\gamma}\frac{1}{e}\log\left(1+\frac{1}{1-\alpha}k(\varepsilon)\right)\Vert \vert v\vert^{\alpha}f_\varepsilon\Vert_{L^1(\RR^{2N})}\\
G_\varepsilon(t)&:=\frac{1}{\varepsilon^\gamma}\int_0^{k(\varepsilon)}\frac{1}{(\beta-\theta)e}\int_{\RR^{2N}}\vert v\vert^{1+\beta}f_\varepsilon\,dx\,dv\,d\theta,\\
&\ =-\frac{1}{\varepsilon^\gamma}\frac{1}{e}\log\left(1-\frac{1}{\beta}k(\varepsilon)\right)\Vert \vert v\vert^{1+\beta} f_\varepsilon\Vert_{L^1(\RR^{2N})}.
\end{align*}
Regarding the first term, let us use once more the H\"{o}lder inequality to obtain
$$\Vert F_\varepsilon\Vert_{L^{2/\alpha}(0,T)}\leq \frac{1}{e}\frac{\log\left(1+\frac{1}{1-\alpha}k(\varepsilon)\right)}{\varepsilon^{\gamma(1-\alpha)}}\left(\frac{1}{\varepsilon^\gamma}\Vert \vert v\vert f_\varepsilon\Vert_{L^1(0,T;L^1(\RR^{2N}))}\right)^{\alpha}\Vert f_\varepsilon(0)\Vert_{L^1(\RR^{2N})}^{1-\alpha}.$$
In this case, Corollary \ref{estimates.all.moments.fritionlimit.cor}, our choice of $k(\varepsilon)$ and L'H\^opital's rule show that
$$\lim_{\varepsilon\rightarrow 0}\Vert F_\varepsilon\Vert_{L^{2/\alpha}(0,T)}=0.$$
Note that the endpoint case $\alpha=0$ is also allowed, giving rise to $L^\infty$ norms with respect to times. Regarding the second term,
$$\Vert G_\varepsilon\Vert_{L^{\frac{2}{1+\beta}}(0,T)}\leq -\frac{1}{e}\log\left(1-\frac{1}{\beta}k(\varepsilon)\right)\varepsilon^{\gamma\beta}\left(\frac{1}{\varepsilon^{2\gamma}}\Vert \vert v\vert^2 f_\varepsilon\Vert_{L^1(0,T;L^1(\RR^{2N}))}\right)^{\frac{1+\beta}{2}}\left(\int_{\RR^{2N}}f_\varepsilon(0)\right)^{\frac{1-\beta}{2}}.$$
The same reasoning as above also shows that
$$\lim_{\varepsilon\rightarrow 0}\Vert G_\varepsilon\Vert_{L^{\frac{2}{1+\beta}}(0,T)}=0.$$
Since both exponents are ordered, namely
$$\frac{2}{1+\beta}<\frac{2}{\alpha},$$
then, one can conclude that
$$\lim_{\varepsilon\rightarrow 0}\frac{1}{\varepsilon^\gamma}\left\Vert\vert\vert v\vert^{k(\varepsilon)}-1\vert\,\vert v\vert f_\varepsilon\right\Vert_{L^{\frac{2}{1+\beta}}(0,T;L^1(\RR^{2N}))}=0,$$
for every $\beta\in (0,1]$. Since $\beta=0$ is not allowed in the splitting (\ref{logarithm.bounds.eq}) (to obtain convergent $\theta$-integrals), then we are led to the desired convergence result for each exponent $1\leq p<2$.
\end{proof}

\begin{cor}\label{convergence.friction.frictionlimit.cor}
Let the initial distribution functions $f_\varepsilon^0$ verify (\ref{hypothesis.initial.data.f.form}), the external forces $-\nabla\psi_\varepsilon$ fulfil (\ref{hypothesis.initial.data.F.form}), consider the strong global in time solution $f_\varepsilon$ to (\ref{P_epsilon_frictionlimit.form}) with initial data $f_\varepsilon^0$ and $\lambda\in (0,N/2)$ and assume the hypothesis (\ref{Hypothesis.k.alpha.hyperbolic})--(\ref{Hypothesis.k.alpha.intermediate}). Then,
$$\frac{1}{\varepsilon^\gamma}q_\varepsilon^{k(\varepsilon)+1}\overset{*}{\rightharpoonup} j$$
when $\varepsilon\rightarrow 0$ both in $L^p(0,T;\mathcal{M}(\RR^N))$, for every $1<p<2$, and in $\mathcal{M}([0,T]\times \RR^N)$.
\end{cor}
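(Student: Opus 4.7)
The plan is to reduce the statement to Theorem \ref{convergence.friction.frictionlimit.theo} by splitting the quantity into the already-controlled current and a remainder that vanishes strongly. Specifically, I would write
$$\frac{1}{\varepsilon^\gamma}q_\varepsilon^{k(\varepsilon)+1}(t,x)=\frac{j_\varepsilon(t,x)}{\varepsilon^\gamma}+\frac{1}{\varepsilon^\gamma}\int_{\RR^N}(\vert v\vert^{k(\varepsilon)}-1)\,v\,f_\varepsilon(t,x,v)\,dv=:\frac{j_\varepsilon(t,x)}{\varepsilon^\gamma}+R_\varepsilon(t,x),$$
so that, taking the pointwise estimate $|R_\varepsilon(t,x)|\leq \varepsilon^{-\gamma}\int||v|^{k(\varepsilon)}-1|\,|v|f_\varepsilon\,dv$, Theorem \ref{convergence.friction.frictionlimit.theo} immediately gives
$$\Vert R_\varepsilon\Vert_{L^p(0,T;L^1(\RR^N))}\leq \frac{1}{\varepsilon^\gamma}\left\Vert \vert\vert v\vert^{k(\varepsilon)}-1\vert\,\vert v\vert f_\varepsilon\right\Vert_{L^p(0,T;L^1(\RR^{2N}))}\longrightarrow 0$$
for every $1\leq p<2$.

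Next I would combine this strong convergence with the already-established weak-star convergence $j_\varepsilon/\varepsilon^\gamma\overset{*}{\rightharpoonup} j$ in $L^2(0,T;\mathcal{M}(\RR^N))^N$. Because $[0,T]$ has finite measure and $L^1(\RR^N)\hookrightarrow \mathcal{M}(\RR^N)$ continuously via the Radon--Nikodym identification, the remainder $R_\varepsilon\to 0$ strongly in $L^p(0,T;\mathcal{M}(\RR^N))^N$, while H\"older's inequality upgrades the $L^2$-weak-star convergence of $j_\varepsilon/\varepsilon^\gamma$ to weak-star convergence in $L^p(0,T;\mathcal{M}(\RR^N))^N$ for every $1\leq p\leq 2$. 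Adding the two contributions yields the first conclusion for each $1<p<2$.

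To upgrade to weak-star convergence in $\mathcal{M}([0,T]\times \RR^N)^N$, I would exploit the continuous embedding $C_0([0,T]\times \RR^N)\hookrightarrow L^{p'}(0,T;C_0(\RR^N))$: every $\varphi\in C_0([0,T]\times \RR^N)$ is, by joint continuity and decay at infinity in $x$, uniformly continuous in $t$ valued in $C_0(\RR^N)$, hence belongs to $C([0,T];C_0(\RR^N))\subset L^{p'}(0,T;C_0(\RR^N))$ for any $1\leq p'\leq\infty$. Testing the convergence from the previous step against such a $\varphi$ in the duality $\langle L^{p'}(0,T;C_0(\RR^N)),L^p(0,T;\mathcal{M}(\RR^N))\rangle$ coincides with the pairing in $\mathcal{M}([0,T]\times\RR^N)$--$C_0([0,T]\times\RR^N)$, and yields the second conclusion.

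The proof is essentially bookkeeping: all the analytical content sits in Theorem \ref{convergence.friction.frictionlimit.theo}, and the uniform $L^p$-bound on $\varepsilon^{-\gamma}\Vert|v|f_\varepsilon\Vert_{L^1}$ from Corollary \ref{estimates.all.moments.fritionlimit.cor} prevents accumulation of mass during the limiting procedure. The only real point to be careful about is the topological compatibility between $L^p(0,T;\mathcal{M}(\RR^N))$ and $\mathcal{M}([0,T]\times \RR^N)$ in the third step, since these are genuinely different spaces, and one must justify that weak-star convergence transfers through the embedding $C_0([0,T]\times \RR^N)\hookrightarrow L^{p'}(0,T;C_0(\RR^N))$. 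I expect this to be the main technical subtlety, but it is standard.
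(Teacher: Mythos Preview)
Your proposal is correct and follows essentially the same approach as the paper: both reduce to Theorem \ref{convergence.friction.frictionlimit.theo} via the decomposition $\varepsilon^{-\gamma}q_\varepsilon^{k(\varepsilon)+1}=\varepsilon^{-\gamma}j_\varepsilon+R_\varepsilon$ with $R_\varepsilon\to 0$ strongly in $L^p(0,T;L^1)$. The paper's argument is phrased slightly differently---it extracts a weak-star subsequential limit $q$ of $\varepsilon^{-\gamma}q_\varepsilon^{k(\varepsilon)+1}$ and then uses weak-star lower semicontinuity of the norm to force $q=j$---but this is logically equivalent to your direct ``strong $+$ weak-star $=$ weak-star'' step; your version is in fact a bit cleaner since it avoids the subsequence extraction, and you also spell out the $\mathcal{M}([0,T]\times\RR^N)$ case that the paper only mentions in passing.
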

\begin{proof}
For simplicity, we restrict to the case $1<p<2$, although the other case can be proved through similar arguments. First, recall that
$$\frac{j_\varepsilon}{\varepsilon^\gamma}\overset{*}{\rightharpoonup} j\hspace{0.25cm}\mbox{in }L^2(0,T;\mathcal{M}(\RR^N)).$$
Since we are assuming $p<2$, then
$$\frac{j_\varepsilon}{\varepsilon^\gamma}\overset{*}{\rightharpoonup} j\hspace{0.25cm}\mbox{in }L^p(0,T;\mathcal{M}(\RR^N)).$$
By virtue of Theorem \ref{convergence.friction.frictionlimit.theo} one has that $\frac{1}{\varepsilon^\gamma}q_\varepsilon^{k(\varepsilon)+1}$ is bounded in $L^p(0,T;L^1(\RR^{2N}))$. Then, some common subsequence (that we do not distinguish from the original for the sake of simplicity) weakly-star converges to some limit $q\in L^p(0,T;\mathcal{M}(\RR^N))$, i.e.,
$$\frac{1}{\varepsilon^\gamma}q_\varepsilon^{k(\varepsilon)+1}\overset{*}{\rightharpoonup} q\hspace{0.25cm}\mbox{in }L^p(0,T;\mathcal{M}(\RR^N)).$$
Hence,
$$\frac{q_\varepsilon^{k(\varepsilon)+1}}{\varepsilon^\gamma}-\frac{j_\varepsilon}{\varepsilon^\gamma}\overset{*}{\rightharpoonup} q-j\hspace{0.25cm}\mbox{in }L^p(0,T;\mathcal{M}(\RR^N)).$$
Then, the weak-star lower semicontinuity of dual norms yields the estimate
\begin{align*}
\Vert q-j\Vert_{L^p(0,T;\mathcal{M}(\RR^N))}&\leq \liminf_{\varepsilon\rightarrow 0}\left\Vert\frac{q_\varepsilon^{k(\varepsilon)+1}}{\varepsilon^\gamma}-\frac{j_\varepsilon}{\varepsilon^\gamma}\right\Vert_{L^p(0,T;L^1(\RR^{2N}))}\\
&\leq \lim_{\varepsilon\rightarrow 0}\frac{1}{\varepsilon^\gamma}\left\Vert \vert \vert v\vert^{k(\varepsilon)+1}-1\vert\,\vert v\vert f_\varepsilon\right\Vert_{L^p(0,T;L^1(\RR^{2N}))}=0.
\end{align*}
Hence, $q=j$ and this ends the proof.
\end{proof}

\appendix

\section{Physical constants, dimensionless model and scalings}\label{Appendix-Scaling}
In this Appendix we introduce the kinetic Cucker--Smale model of interest with all its physical constants. After a dimensionless model, we will identify some dimensionless parameters that will be useful to justify the proposed scalings of hyperbolic and semi-hyperbolic type. 

Let us begin with the Vlasov equation for the distribution function of particles, $f=f(t,x,v)$, taking into account alignment effects of Cucker--Smale type along with some sort of thermal bath modeled by the classical Fokker--Planck term. We may also consider the action of some exterior or self-generated conservative force $F=-\nabla_x\psi$ described by some potential function $\psi=\psi(t,x)$.
$$\frac{\partial f}{\partial t}+v\cdot \nabla_x f-\nabla_x\psi\cdot \nabla_v f=L_{FP}(f)+\divop_v(Q_{CS}(f,f)).$$
The mechanical variables $x,v\in\RR^N$ stand for the position and velocity in any dimension $N$. Here, $L_{FP}(f)$  denote the linear Fokker--Plank operator and $Q_{CS}(f,f)$ is an integro--differential operator of Cucker--Smale type, i.e.,
$$
L_{FP}(f)=\divop_v(vf+\nabla_v f),\hspace{0.7cm} Q_{CS}(f)=f(t,x,v)\int_{\RR^N}\int_{\RR^N} \phi(\vert x-y\vert)(v-w)f(t,y,w)\,dy\,dw.
$$
The influence function $\phi(r)$ is assumed to take the classical form attributed to Cucker and Smale
$$\phi(r)=\frac{1}{(1+r^2)^\lambda},\ r\geq 0.$$
A dimensional analysis amounts to consider the next physical constants
\begin{align*}
m:&\hspace{0.3cm}\mbox{mass of each particle,}\\
\sqrt{\mu}:&\hspace{0.3cm}\mbox{mean thermal velocity,}\\
\tau:&\hspace{0.3cm}\mbox{relaxation time under the thermal bath,}\\
\sigma:&\hspace{0.3cm}\mbox{``range of the effective interactions'',}\\
K:&\hspace{0.3cm}\mbox{``maximum strength of the interactions'',}
\end{align*}
and the associated dimensional model
$$\frac{\partial f}{\partial t}+v\cdot \nabla_x f-\frac{1}{m}\nabla_x\psi\cdot \nabla_v f=\divop_v\left(\frac{\mu}{\tau}\nabla_v f+\frac{1}{\tau}vf+\frac{1}{m}Q_{CS}^{\lambda,K,\sigma}(f,f)\right),$$
where the Cucker--Smale operator is
$$Q_{CS}^{\lambda,K,\delta}(f,f)=f(t,x,v)\int_{\RR^N}\int_{\RR^N}\phi^{\lambda,K,\delta}(\vert x-y\vert)(v-w)f(t,y,w)\,dy\,dw.$$
The corresponding interaction potential takes the form
$$\phi^{\lambda,K,\sigma}(r)=Ka^{\lambda,\sigma}(r):=K\frac{\sigma^{2\lambda}}{(\sigma^2+c_\lambda r^2)^\lambda},\ r\geq 0,$$
for the nonnegative constant $c_\lambda=2^{1/\lambda}-1$. The constants $\mu$ and $\tau$ are well known in the Boltzmann equation of kinetic theory of gases. See \cite[Appendix]{PoupaudSoler-Parabolico} for a comprehensive dimensional analysis of the Vlasov--Poisson--Fokker--Planck system. Let us give some sense to the parameters $\sigma$ and $K$ and explain the above quotation marks.
\begin{itemize}
\item First, notice that the $N$-particles system of Cucker--Smale type simply amounts to the next law for the modification of the velocity of the $i$-th particle:
$$v_i(t+\Delta t)=v_i(t)+\frac{K\Delta t}{N}\sum_{i=1}^Na^{\lambda,\sigma}(\vert x_i(t)-x_j(t)\vert)(v_j(t)-v_i(t)),$$
for small time increments $\Delta t>0$. Since $a^{\lambda,\sigma}(r)$ is dimensionless, then $\Delta t K$ measures the amount of times that one has to add the weighted mean relative velocities of each particle
$$\frac{1}{N}\sum_{i=1}^Na^{\lambda,\sigma}(\vert x_i(t)-x_j(t)\vert)(v_j(t)-v_i(t)),$$
to the initial velocity $v_i(t)$ to obtain the correction $v_i(t+\Delta t)$ of the final velocity. In some sense, $K$ measures the ``strength of the interactions''.

\item Second, our weight function $a^{\lambda,\sigma}(r)$ takes values on $[0,1]$ and the interaction kernel $\phi^{\lambda,K,\sigma}$ achieves its maximum $K$ at $r=0$. Furthermore, $\sigma$ has been considered to characterize the ``range of the effective interactions''. Specifically, particles at distance $r\geq \sigma$ amount to interactions with strength
$$\phi^{\lambda,K,\sigma}(r)\leq \frac{K}{2}.$$
\end{itemize}

Let us now adimensionalize the above model by means of some characteristic units for time, space, velocity, density of particles and potential
$$\overline{t}=\frac{t}{T},\hspace{0.5cm}\overline{x}=\frac{x}{R},\hspace{0.5cm}\overline{v}=\frac{v}{V},\hspace{0.5cm}\overline{f}(\overline{t},\overline{x},\overline{v})=\frac{f(t,x,v)}{f_0}, \hspace{0.5cm}\overline{\psi}(\overline{t},\overline{x})=\frac{\psi(t,x)}{\psi_0}.$$
Straightforward computations on the original model lead to the next dimensionless model, where we have removed bars for the sake of simplicity in the notation
$$
\frac{\partial f}{\partial t}+\frac{VT}{R}v\cdot \nabla_x f-\frac{1}{m}\frac{\psi_0 T}{RV}\nabla_x\psi\cdot \nabla_v f=\divop_v\left(\frac{T}{\tau}fv+\frac{T}{\tau}\frac{\mu}{V^2}\nabla_v f+\frac{V^N R^N f_0T}{m}Q_{CS}^{\lambda,K,\sigma/R}(f,f)\right).
$$
Now, let us introduce a scaling where the characteristic units and the physical constants are linked thought the next formulas:
$$\frac{R}{T}=\frac{1}{m}\frac{\tau}{R}\psi_0,\hspace{0.5cm} M=V^NR^Nf_0,$$
where $M$ is some characteristic mass of the system of particles. The above choices are considered in order for the quotient of the characteristic length over the characteristic time to be the drift velocity associated with the potential $\psi(t,x)$ and the total mass of the rescaled particle distribution function $\overline{f}$ not to depend on the physical parameters in the rescaling (thus remaining constant). Finally, consider the next dimensionless parameters for the thermal mean velocity, the thermal mean free path, the scaled mass of the particles, the scaled range of the effective interactions and the scaled maximum strength of the interactions
$$\alpha:=\frac{\sqrt{\mu}}{R/T}, \hspace{0.5cm}\beta:=\frac{\sqrt{\mu}\tau}{R},\hspace{0.5cm} \mathcal{V}:=\frac{\sqrt{\mu}}{V},\hspace{0.5cm}\mathcal{M}:=\frac{m}{M},\hspace{0.5cm}\delta:=\frac{\sigma}{R},\hspace{0.5cm}\mathcal{K}:=\tau K.$$
Consequently, the corresponding dimensionless model reads
$$
\frac{\partial f}{\partial t}+\frac{\alpha}{\mathcal{V}} v\cdot\nabla_x f-\frac{\mathcal{V}}{\beta}\nabla_x\psi\cdot \nabla_v f=\frac{\alpha}{\beta}\divop_v\left(fv+\mathcal{V}^2\nabla_v f+\frac{1}{\mathcal{M}}Q_{CS}^{\lambda,\mathcal{K},\delta}(f,f)\right).
$$
In this paper, we are interested in the next scalings of the system for a parameter $\varepsilon\rightarrow 0$:
\begin{enumerate}
\item First, a hyperbolic scaling can be obtained by choosing the next order of the parameters:
$$\alpha=1,\hspace{0.5cm}\beta=\varepsilon,\hspace{0.5cm}\mathcal{V}=1,\hspace{0.5cm}\mathcal{M}=1,\hspace{0.50cm}\delta=\varepsilon,\hspace{0.5cm}\mathcal{K}=\varepsilon^{-2\gamma}.$$
Note that, in particular, we are assuming the characteristic velocity of the system to agree with the mean thermal velocity in the thermal bath. In this case the system takes the form
\begin{equation}\label{VPFPCS-dimensionless-hyperbolicscaling-appendix.form}
\varepsilon\frac{\partial f_\varepsilon}{\partial t}+\varepsilon v\cdot\nabla_x f_\varepsilon-\nabla_x\psi_\varepsilon\cdot \nabla_v f_\varepsilon=\divop_v\left(f_\varepsilon v+\nabla_v f_\varepsilon+Q_{CS}^{\phi_\varepsilon}(f_\varepsilon,f_\varepsilon)\right).
\end{equation}
\item Second, one can suppose that the characteristic velocity of the system is much larger that the mean thermal velocity. An appropriate choice of the scaling is
$$\alpha=1,\hspace{0.5cm}\beta=\varepsilon^{1+\gamma},\hspace{0.5cm}\mathcal{V}=\varepsilon^\gamma,\hspace{0.5cm}\mathcal{M}=1,\hspace{0.50cm}\delta=\varepsilon,\hspace{0.5cm}\mathcal{K}=\varepsilon^{-2\gamma},$$
for some parameter $\gamma\in [0,1]$. We will call such scaling, an intermediate hyperbolic scaling and it reads
\begin{equation}\label{VPFPCS-dimensionless-semihyperbolicscaling-appendix.form}
\varepsilon^{1+\gamma}\frac{\partial f_\varepsilon}{\partial t}+\varepsilon v\cdot\nabla_x f_\varepsilon-\varepsilon^\gamma\nabla_x\psi_\varepsilon\cdot \nabla_v f_\varepsilon=\divop_v\left(f_\varepsilon v+\varepsilon^{2\gamma}\nabla_v f_\varepsilon+Q_{CS}^{\phi_\varepsilon}(f_\varepsilon,f_\varepsilon)\right).
\end{equation}
\item Finally, one might be interested on neglecting friction effects. In this case, the factor $fv$ disappears in the above equation. We are interested in a hyperbolic scaling that stands for the next choice of the dimensionless parameters
$$\alpha=1,\hspace{0.5cm}\beta=\varepsilon,\hspace{0.5cm}\mathcal{V}=\varepsilon,\hspace{0.5cm}\mathcal{M}=1,\hspace{0.50cm}\delta=\varepsilon,\hspace{0.5cm}\mathcal{K}=\varepsilon^{-2\gamma}.$$
Such system takes the form
\begin{equation}\label{VPFPCS-dimensionless-hyperbolicscaling-frictionless-appendix.form}
\varepsilon\frac{\partial f_\varepsilon}{\partial t}+\varepsilon v\cdot\nabla_x f_\varepsilon-\varepsilon\nabla_x\psi_\varepsilon\cdot \nabla_v f_\varepsilon=\divop_v\left(\varepsilon\nabla_v f_\varepsilon+Q_{CS}^{\phi_\varepsilon}(f_\varepsilon,f_\varepsilon)\right).
\end{equation}
\end{enumerate}
Here, $\phi_\varepsilon$ is just the local interaction kernel given by
$$\phi_\varepsilon(r):=\frac{1}{(\varepsilon^2+c_\lambda r^2)^\lambda},\ \ r>0.$$
When $\varepsilon\rightarrow 0$, we recover singular kernels of Newtonian type as opposed to the classical regular setting in the original works of Cucker and Smale. Thus, we are interested on taking limits as $\varepsilon\rightarrow 0$, that amounts to consider a coupled hydrodynamic and singular limit in the kinetic model.


\end{document}